\definecolor{myred}{RGB}{251,154,133}
\definecolor{myblue}{RGB}{153,206,227}
\definecolor{mylightblue}{RGB}{0, 150, 255}
\definecolor{mygreen}{RGB}{32, 210, 64}
\definecolor{mygray}{RGB}{220, 220, 220}
\tikzset{snake it/.style={decorate, decoration=snake}}
\newtheorem{theorem}{Theorem}
\newtheorem{definition}{Definition}
\newtheorem{lemma}{Lemma}[section]
\newtheorem{remark}{Remark}
\newtheorem{Proposition}{Proposition}
\newtheorem{corollary}{Corollary}
\def\beq{ \begin{equation} }
\def\eeq{ \end{equation} }
\def\ep{\epsilon}
\def\square{\vcenter{\vbox{\hrule height .4pt
  \hbox{\vrule width .4pt height 5pt \kern 5pt
        \vrule width .4pt} \hrule height .4pt}}}
\def\ZZ{\mathbb{Z}}
\begin{document}

\title{Stationary Harmonic Measure and DLA in the Upper half Plane}

\author{Eviatar B. Procaccia}
\address[Eviatar B. Procaccia\footnote{Research supported by NSF grant DMS-1407558}]{Texas A\&M University}
\urladdr{www.math.tamu.edu/~procaccia}
\email{eviatarp@gmail.com}
 
\author{Yuan Zhang}
\address[Yuan Zhang]{Texas A\&M University}
\urladdr{http://www.math.tamu.edu/~yzhang1988/}
\email{yzhang1988@math.tamu.edu}

\maketitle

\begin{abstract}
In this paper, we introduce the stationary harmonic measure in the upper half plane. By bounding this measure, we are able to define both the discrete and continuous time diffusion limit aggregation (DLA) in the upper half plane with absorbing boundary conditions. We prove that for the continuous model the growth rate is bounded from above by $o(t^{2+\ep})$. When time is discrete, we also prove a better upper bound of $o(n^{2/3+\ep})$, on the maximum height of the aggregate at time $n$. An important tool developed in this paper, is an interface growth process, bounding any process growing according to the stationary harmonic measure. Together with \cite{when_harmonic_measure_0} one obtains non zero growth rate for any such process. 
\end{abstract}

\tableofcontents

\section{Introduction}

In this paper, we consider the stationary harmonic measure in the upper half plane and the corresponding diffusion limit aggregation (DLA). The Diffusion Limited Aggregation (DLA) in $\ZZ^2$ was introduced in 1983 by Witten and Sander \cite{DLA_introduction} as a simple model to study the geometry and dynamics of physical systems governed by diffusive laws. The DLA is defined recursively as a process on subsets $A_n\in \ZZ^2$. Starting from $A_0=\{(0,0)\}$, at each time a new point $a_{n+1}$ sampled from the harmonic probability measure on $\partial^{out}A_n$ is added to $A_n$. Intuitively, $a_{n+1}$ is the first place that a random walk starting from infinity visits $\partial^{out}A_n$. 

Although DLA is straightforwardly defined and easily simulated on a computer, very little about it is known rigorously. One of the notable exceptions is shown by Kesten where a polynomial upper bound, which equal to $n^{2/3}$ when $d=2$ and $n^{2/d}$ when $d\ge 3$, of the growth rate on DLA arms is given, see Corollary in \cite{harmonic_measure_1987} or Theorem in \cite{DLA_long}. In a later work \cite{DLA_1991} Kesten improved the upper bound for DLA when $d=3$ to $\sqrt{n\log(n)}$. No non-trivial lower bounds have been proved till present day. It is in fact still open to rule out that the DLA converges to a ball, although numerical simulations clearly exclude this. 

Recently, this topic is re-visited by Benjamini and Yadin \cite{DLA_re-visit} where they ```clean up' Kesten's argument, and make it more robust". They proved upper bounds on the growth rates of DLA's on ``transitive graphs of polynomial growth, graphs of exponential growth, non-amenable graphs, super-critical percolation on $Z^d$ and high dimensional pre-Sierpinski carpets". 

In this paper, we further extend the reach of Kesten's idea to non-transitive graphs. We define the (horizontally) translation invariant stationary harmonic measure on the upper half plane with absorbing boundary condition and show the growth of such stationary harmonic measure in a connected subset intersecting $x-$axis is sub-linear with respect to the height. With the bounds found for our stationary harmonic measure, we will be able to define a continuous time DLA on the upper half plane and give upper bound on its growth rates.

We first define several sets and stopping times for our problem. Let $\mathcal{H}=\{(x,y)\in \ZZ^2, y\ge 0\}$ be the upper half plane (including $x$-axis), and $S_n, n\ge 0$ be a 2-dimensional simple random walk. For any $x\in \ZZ^2$, we will write 
$$
x=(x_1,x_2)
$$
with $x_i$ denote the $i$th coordinate of $x$, and $\|x\|=|x_1|+|x_2|$. Then let $L_n, D_n\subset \ZZ^2$ be defined as follows: for each nonnegative integer $n$, define
$$
L_n=\{(x,n), \ x\in \ZZ\},
$$
$$
V_n=\{(0,k), \ 0\le k\le n\}, 
$$
and 
$$
U_n=L_0\cup V_n.
$$
I.e., $L_n$ is the horizontal line of height $n$ while $U_n$ is $x-$axis plus the vertical line segment between $(0,0)$ and $(0,n)$. And we let $y_n=(0,n)$ be the ``end point" of $V_n$. Moreover, we use $\mathcal{P}_n\subset \mathcal{H}$ for an arbitrary finite path in the upper half plane connecting $y_n$ and the $x-$axis. One can immediately see that $V_n$ is one such path. 

And for each subset $A\subset \ZZ^2$ we define stopping times 
$$
\bar \tau_A=\min\{n\ge 0, \ S_n\in A \}
$$
and
$$
\tau_A=\min\{n\ge 1, \ S_n\in A \}.
$$
For any subsets $A_1\subset A_2$ and $B$ and any $y\in \ZZ^2$, by definition one can easily check that 
\beq
\label{basic 1}
\begin{aligned}
&P_y\left(\tau_{A_1}<\tau_{B}\right)\le P_y\left(\tau_{A_2}<\tau_{B}\right), \\
&P_y\left(\bar\tau_{A_1}<\bar\tau_{B}\right)\le P_y\left(\bar\tau_{A_2}<\bar\tau_{B}\right), 
\end{aligned}
\eeq
and that 
\beq
\label{basic 2}
\begin{aligned}
P_y\left(\tau_{B}<\tau_{A_2}\right)\le P_y\left(\tau_{B}<\tau_{A_1}\right),\\
P_y\left(\bar\tau_{B}<\bar\tau_{A_2}\right)\le P_y\left(\bar\tau_{B}<\bar\tau_{A_1}\right),
\end{aligned}
\eeq
where $P_y(\cdot)=P(\cdot|S_0=y)$.
Now we define the stationary harmonic measure on $\mathcal{H}$ which will serve as the Poisson intensity in our continuous time DLA model. For any connected $B\subset \mathcal{H}$, any edge $\vec e=x\to y$ with  $x\in B$, $y\in \mathcal{H}\setminus B$ and any $N$, we define 
\beq
\label{harmonic measure edge}
H_{B, N}(\vec e)=\sum_{z\in L_N\setminus B} P_z\left(S_{\bar \tau_{B\cup L_0}}=x, S_{\bar \tau_{B\cup L_0}-1}=y\right)
\eeq
By definition, a necessary condition for $H_{B, N}(\vec e)>0$ (although at this point we have not yet ruled out the possibility it equals to infinity) is $y\in \partial^{out}B$ and $|x-y|=1$. And for all $x\in B$, we can also define  
\beq
\label{harmonic measure point old}
H_{B, N}(x)=\sum_{\vec e \text{ starting from } x}H_{B, N}(\vec e) =\sum_{z\in L_N\setminus B} P_z\left(S_{\bar \tau_{B\cup L_0}}=x\right).
\eeq
And for each point $y\in \partial^{out}B$, we can also define 
\beq
\label{harmonic measure point new}
\hat H_{B, N}(y)=\sum_{\tiny\begin{aligned}\vec e &\text{ starting in $B$}\\ &\text{ ending at } y\end{aligned}}H_{B, N}(\vec e) =\sum_{z\in L_N\setminus B} P_z\left(\tau_{B}\le \tau_{L_0} , S_{\bar \tau_{B\cup L_0}-1}=y\right).
\eeq
By coupling and strong Markov property, we have $N\to H_{A,N}(e)$ is bounded and monotone in $N$. Thus
\begin{Proposition}
\label{proposition_well_define}
For any $B$ and $\vec e$ as above, there is a finite $H_{B}(\vec e)$ such that 
\beq
\lim_{N\to\infty} H_{B, N}(\vec e)=H_{B}(\vec e). 
\eeq
\end{Proposition}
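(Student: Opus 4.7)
The plan is to show that $\{H_{B,N}(\vec e)\}_{N\geq 1}$ is uniformly bounded and monotone in $N$, so that the limit exists and is finite by the monotone convergence theorem for real sequences.

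For boundedness I would use reversibility of simple random walk. Decomposing a trajectory from $z$ according to the last step, which must be $y\to x$ by the definition of $\vec e$, gives
\[
P_z\!\left(S_{\bar\tau_{B\cup L_0}}=x,\, S_{\bar\tau_{B\cup L_0}-1}=y\right)=\tfrac14\, G_{B\cup L_0}(z,y),
\]
where $G_{B\cup L_0}$ is the Green's function of simple random walk killed on $B\cup L_0$. Summing over $z\in L_N\setminus B$ and using the symmetry $G_{B\cup L_0}(z,y)=G_{B\cup L_0}(y,z)$ yields
\[
H_{B,N}(\vec e)=\tfrac14\,E_y\bigl[\,\#\{\,0\leq k<\bar\tau_{B\cup L_0}:\,S_k\in L_N\setminus B\,\}\bigr].
\]
Removing $B$ from the absorbing set only enlarges the collection of admissible times, so this is bounded above by $\tfrac14 E_y[\#\{0\leq k<\bar\tau_{L_0}:\,S_k\in L_N\}]$; translation invariance in $x$ reduces the latter to the expected local time at level $N$ of the $y$-marginal, which is a lazy one-dimensional walk killed at $0$, and an explicit computation evaluates this to $4\min(y_2,N)$. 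Hence $H_{B,N}(\vec e)\leq y_2$ uniformly in $N$.

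For monotonicity, fix $N_1<N_2$ and apply the strong Markov property to a walk from $z'\in L_{N_2}\setminus B$ at the stopping time $T=\bar\tau_{L_{N_1}\cup B\cup L_0}$. Splitting on whether $S_T$ lies in $L_{N_1}\setminus B$ or in $B\cup L_0$ produces
\[
H_{B,N_2}(\vec e)=\sum_{z\in L_{N_1}\setminus B}q(z)\,P_z\!\left(S_{\bar\tau_{B\cup L_0}}=x,\, S_{\bar\tau_{B\cup L_0}-1}=y\right)+\Delta,
\]
with $q(z)=\sum_{z'\in L_{N_2}\setminus B}P_{z'}(S_T=z)$ and $\Delta\geq 0$ the contribution of walks absorbed at $x$ via $y$ before visiting $L_{N_1}\setminus B$. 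The key ingredient is the identity $\sum_{z'\in L_{N_2}}P_{z'}(S_{\bar\tau_{L_{N_1}\cup L_0}}=z)=1$ for each $z\in L_{N_1}$, which follows from translation invariance along the $x$-axis together with the elementary observation that simple random walk from level $N_2$ must cross $L_{N_1}$ before reaching $L_0$. A coupling between the dynamics with and without $B$ in the absorbing set then accounts for the mass diverted from $q(z)$ into $\Delta$, and comparing the two contributions yields $H_{B,N_2}(\vec e)\geq H_{B,N_1}(\vec e)$.

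The principal obstacle I anticipate is making the monotonicity step airtight when $B$ has unbounded vertical extent, so that walks from $L_{N_2}\setminus B$ may hit $B$ without first visiting $L_{N_1}\setminus B$: in that case $q(z)<1$ and $\Delta>0$ occur simultaneously, and the coupling must carefully match the mass lost from $q(z)$ against the $\Delta$ contribution. Once both ingredients are established, the proposition follows directly.
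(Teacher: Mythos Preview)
Your boundedness argument via Green's function symmetry is correct and essentially the same as the paper's time-reversal computation. The issue is in your monotonicity step: you claim the sequence is \emph{increasing} in $N$, but it is actually \emph{decreasing}, as the paper proves.

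Your own decomposition shows this. Take $x_2 < N_1 < N_2$. A walk from $z'\in L_{N_2}\setminus B$ starts at height $N_2$ and must pass through level $N_1$ before it can reach $x$ at height $x_2$. At the moment it first enters $L_{N_1}\cup B\cup L_0$, it is therefore either at a point of $L_{N_1}\setminus B$, or it has already been absorbed in $B$ at some point of height $\ge N_1$, which is \emph{not} $x$. Hence your correction term $\Delta$ vanishes identically: there is no trajectory that is absorbed at $x$ via $y$ before visiting $L_{N_1}\setminus B$. What remains is
\[
H_{B,N_2}(\vec e)=\sum_{z\in L_{N_1}\setminus B} q(z)\,P_z\!\left(S_{\bar\tau_{B\cup L_0}}=x,\,S_{\bar\tau_{B\cup L_0}-1}=y\right),
\]
and since $q(z)\le \sum_{z'\in L_{N_2}}P_{z'}(S_{\bar\tau_{L_{N_1}}}=z)=1$ by your translation-invariance identity, you get $H_{B,N_2}(\vec e)\le H_{B,N_1}(\vec e)$, the opposite inequality. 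The ``principal obstacle'' you anticipate---matching the deficit $1-q(z)$ against $\Delta$---is therefore a red herring: the deficit corresponds to walks absorbed in $B$ above level $N_1$ at points other than $x$, and these contribute nothing to either side.

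With this correction the proof is complete: the sequence is nonincreasing and bounded below by $0$, so the limit exists. The paper argues the same decrease via a translation coupling of the walks from $L_N$, which amounts to the same identity you use.
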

And we call $H_{B}(\vec e)$ the stationary harmonic measure of $\vec e$ with respect to $B$. Thus we immediately have the limits $H_{B}(x)=\lim_{N\to\infty}H_{B, N}(x)$ and $\hat H_{B}(y)=\lim_{N\to\infty}\hat H_{B, N}(y)$ also exists and we call them the stationary harmonic measure of $x$ and $y$ with respect to $B$.  Although now we have the limit $H_{B}(x)$ exists, it can be zero everywhere for certain $B$. We do not need to worry about this when $B$ is finite. For each finite $B$, we let 
$$
H_{B}=\sum_{x\in B} H_{B}(x)=\sum_{y\in\partial^{out} B} \hat H_{B}(y)
$$
be the harmonic measure of $B$. Then we have $H_{B}$ is non-decreasing as $B$ gets larger:
\begin{Proposition}
\label{proposition_finite}
For any finite subsets $B_1\subset B_2\subset \mathcal{H}$,
$$
H_{B_2}\ge H_{B_1}. 
$$
\end{Proposition}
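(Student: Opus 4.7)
The plan is to pass to the pre-limit in $N$ and rely on a clean event-inclusion: for any starting point $z$, the event ``a random walk from $z$ hits $B\cup L_0$ for the first time inside $B$'' is monotone in $B$. By exchanging the two finite sums, one rewrites
$$
H_{B,N} := \sum_{x\in B} H_{B,N}(x) \;=\; \sum_{z\in L_N\setminus B} P_z\!\left(S_{\bar\tau_{B\cup L_0}}\in B\right),
$$
and because $B$ is finite and each $H_{B,N}(x)\to H_B(x)$ by Proposition \ref{proposition_well_define}, we have $H_{B,N}\to H_B$ as $N\to\infty$. The whole task is therefore to show $H_{B_1,N}\le H_{B_2,N}$ for large enough $N$.

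First I would fix $N$ larger than the maximum height of any point in $B_2$, so that $L_N\cap B_2=\emptyset$ and hence $L_N\setminus B_1 = L_N\setminus B_2 = L_N$. For each $z\in L_N$ I would then compare the two events
$$
E_1 = \{S_{\bar\tau_{B_1\cup L_0}}\in B_1\},\qquad E_2 = \{S_{\bar\tau_{B_2\cup L_0}}\in B_2\},
$$
and verify $E_1\subset E_2$. Since $B_1\cup L_0\subset B_2\cup L_0$, the stopping times $T_i:=\bar\tau_{B_i\cup L_0}$ satisfy $T_2\le T_1$. On $E_1$: if $T_2=T_1$ then $S_{T_2}=S_{T_1}\in B_1\subset B_2$; if $T_2<T_1$ then $S_{T_2}\in B_2\cup L_0$ but $S_{T_2}\notin B_1\cup L_0$, so $S_{T_2}\notin L_0$, forcing $S_{T_2}\in B_2$. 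In either case $E_2$ holds.

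Summing $P_z(E_1)\le P_z(E_2)$ over $z\in L_N$ gives $H_{B_1,N}\le H_{B_2,N}$ for all such $N$, and letting $N\to\infty$ yields the desired $H_{B_1}\le H_{B_2}$. I do not anticipate a significant obstacle; the only mild subtlety is when $B_i$ intersects $L_0$, which is exactly why both subcases in the inclusion $E_1\subset E_2$ must be addressed. The argument is really a soft absorption-monotonicity fact and uses nothing about the stationary half-plane setup beyond the defining formula \eqref{harmonic measure point old}.
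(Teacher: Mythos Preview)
Your proposal is correct and follows essentially the same route as the paper: both exchange the two finite sums to reduce to comparing $P_z(S_{\bar\tau_{B_i\cup L_0}}\in B_i)$ pointwise in $z\in L_N$, and both conclude by a monotonicity-of-hitting argument. The paper rewrites these events as $\{\tau_{B_i}\le\tau_{L_0}\}$ and cites \eqref{basic 1}, whereas you prove the inclusion $E_1\subset E_2$ by a direct case split on whether $T_2=T_1$ or $T_2<T_1$; your version is slightly more explicit about the limit $N\to\infty$, but the substance is identical.
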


\begin{remark}
However, for infinite subset of $\mathcal{H}$, it is possible to have the harmonic measure equal to 0 everywhere. In fact, we prove that as long as $B$ has a linear spatial growth horizontally, $H_B(\cdot)$ is uniformly 0. On the other hand, we have also proved that for any $B$ with certain sub-linear spatial growth, it can have non-zero stationary harmonic measure. These results are presented in a separate paper \cite{when_harmonic_measure_0}. 
\end{remark}

After presenting the basic properties of our stationary harmonic measure, we can state our first main result which gives the following upper bounds on $H_{B, N}(x)$:
\begin{theorem}
\label{theorem: uniform_path}
There is some constant $C<\infty$ such that for each connected $B\subset \mathcal{H}$ with $L_0\subset B$ and each $x=(x_1,x_2)\in B\setminus L_0$, and any $N$ sufficiently larger than $x_2$
\beq
\label{uniform bound}
H_{B, N}(x)\le C x_2^{1/2}. 
\eeq
\end{theorem}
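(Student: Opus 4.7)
The plan has three steps: reduce the general connected $B$ to the case $B = L_0 \cup \mathcal{P}$ for a single path $\mathcal{P}$; express the resulting sum via time reversal as an expected occupation time of a walk started at a neighbor of $x$; and bound this occupation time by a Beurling-type escape estimate in the half plane. For the first step, extract a finite self-avoiding lattice path $\mathcal{P} \subset B$ from $x$ down to $L_0$ (which exists because $B$ is connected and contains both $x$ and $L_0$) and set $B' := L_0 \cup \mathcal{P} \subset B$. If a walk starting at $z \in L_N \setminus B$ first enters $B$ at $x$, then, because $B' \subset B$ and $x \in B'$, it also first enters $B'$ at $x$; since summing over the extra starting points in $(B \setminus B') \cap L_N$ only adds non-negative terms on the right, this gives $H_{B,N}(x) \le H_{B',N}(x)$, and it suffices to treat the path case.

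For the second step, each walk contributing to $H_{B,N}(x)$ takes a last step $y \to x$ with $y$ a neighbor of $x$ in $\mathcal{H} \setminus B$; reversing the walk path and using the step-reversibility of simple random walk yields
\[
H_{B,N}(x) \;=\; \frac{1}{4} \sum_{\substack{y \sim x \\ y \notin B}} T_y,
\qquad
T_y \;:=\; E_y\!\bigl[\,\#\{0 \le k < \tau_B : S_k \in L_N\}\,\bigr].
\]
A single application of the strong Markov property at the first entrance time $\tau_{L_N}$ then gives
\[
T_y \;\le\; P_y(\tau_{L_N} < \tau_B)\cdot \sup_{z^\ast \in L_N} E_{z^\ast}\!\bigl[\,\#\{0 \le k < \tau_{L_0} : S_k \in L_N\}\,\bigr],
\]
where $\tau_B \le \tau_{L_0}$ (from $L_0 \subset B$) was used inside the supremum. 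The supremum is $\Theta(N)$: the height coordinate of simple random walk is a one-dimensional lazy walk, and a standard gambler's-ruin/renewal computation shows the expected number of visits to level $N$ before hitting level $0$, starting from level $N$, is $\Theta(N)$.

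What remains, and what is the heart of the matter, is the Beurling-type escape estimate
\[
P_y(\tau_{L_N} < \tau_B) \;\le\; C\,\sqrt{x_2}/N
\]
uniformly over paths $\mathcal{P}$ of height $x_2$ and over neighbors $y$ of $x$. This is the main obstacle. The intuition is conformal: for the extremal straight path $\mathcal{P} = V_{x_2}$, the map $w \mapsto \sqrt{w^2 + x_2^2}$ sends $\mathcal{H} \setminus V_{x_2}$ conformally onto $\mathcal{H}$ and carries each lattice neighbor of the tip $y_{x_2}$ to a point whose imaginary part is of order $\sqrt{x_2}$, so the escape probability becomes the half-plane one-dimensional gambler's ruin $\sqrt{x_2}/N$. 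For a general connected path $\mathcal{P}$ of height $x_2$, one expects the same bound via a discrete Beurling projection inequality asserting that the straight vertical slit is extremal among connected height-$x_2$ obstacles for the escape probability, which can be proved by an iterative argument on dyadic height scales between $x_2$ and $N$ using that at each scale the walk must avoid a connected obstacle whose extent is at least $x_2$. Once this uniform bound is in place, combining the three steps gives $T_y \le C'\sqrt{x_2}$ for each of the at most four neighbors $y$ of $x$, and summing yields the theorem.
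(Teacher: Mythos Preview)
Your reduction (pick a path $\mathcal{P}\subset B$ from $x$ to $L_0$, set $B'=L_0\cup\mathcal{P}$, use monotonicity), your time-reversal formula, and the strong Markov decomposition into $P_y(\tau_{L_N}<\tau_B)\cdot\Theta(N)$ are exactly the paper's first moves; see \eqref{measure to expectation} and \eqref{Theorem_1_1}--\eqref{Theorem_1_2}. So structurally you are on the same track.

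The gap is in what you yourself call ``the heart of the matter'': you assert the Beurling-type bound $P_y(\tau_{L_N}<\tau_B)\le C\sqrt{x_2}/N$ and wave at conformal intuition and a dyadic iteration, but you do not prove it. The paper does not use a Beurling projection or a multi-scale argument. Instead it factors the escape through a single intermediate scale: first truncate the path to the piece $\mathcal{Q}_n\subset B(x,2x_2)$ within distance $x_2$ of $x$, and write
\[
P_{x}(\tau_{L_N}<\tau_{B_n})\;\le\;P_{x}\bigl(\tau_{\partial B(x,C_2\cdot 2x_2)}<\tau_{\mathcal{Q}_n}\bigr)\cdot\sup_{z\in\partial B(x,C_2\cdot 2x_2)}P_z(\tau_{L_N}<\tau_{L_0}).
\]
The second factor is $\le C x_2/N$ by gambler's ruin. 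For the first factor the paper proves a new lemma (Lemma~\ref{lemma: 2.15}) asserting that the classical $\ZZ^2$ harmonic measure satisfies $\mu_D(0)\ge c\,P_0(\tau_{C_2 r}<\tau_D)$; combining this with Kesten's Theorem~1 in \cite{harmonic_measure_1987}, which gives $\mu_D(0)\le C r^{-1/2}$ for connected $D$ of radius $r$, yields the escape probability $\le Cx_2^{-1/2}$. In other words, the paper routes the discrete Beurling estimate through Kesten's harmonic-measure bound rather than proving it directly. Your proposal does not supply any comparable argument, and the ``iterative argument on dyadic height scales'' you mention would itself require a nontrivial Beurling-type input at each scale, so as written the key step is missing.
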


\begin{remark}
In this paper, we use $C$ and $c$ as constants in $(0,\infty)$ independent to the change of variables like $N$ or $n$. But their exact values can be different from place to place. 
\end{remark}

At the same time, we can also have the following result showing that for a point of height $n$, say $y_n$ without loss of generality, the harmonic measure is maximized (up to multiplying a constant) by $U_n$. I.e.

\begin{theorem}
\label{theorem: maximum_path}
There is some constant $c>0$ such that for all $N>n$,  
\beq
\label{maximum}
H_{U_n, N}(y_n)\ge c n^{1/2}. 
\eeq
\end{theorem}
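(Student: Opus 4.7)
The strategy is to convert the harmonic measure at the tip into an expected number of visits of the random walk to $L_N$ via the reversibility of the killed Green function, and then to bound this by splitting off two factors: the probability of reaching $L_N$ from near the tip, and the mean number of visits to $L_N$ once arrived.

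First I would combine the last-step decomposition of $P_z(S_{\bar\tau_{U_n}}=y_n)$ with the Green-function symmetry $G_{U_n}(z,w)=G_{U_n}(w,z)$ for simple random walk killed on $U_n$, to rewrite
\[
H_{U_n,N}(y_n)=\tfrac14\sum_{w\sim y_n,\,w\notin U_n} E_w\!\left[\#\{0\le k<\bar\tau_{U_n}:S_k\in L_N\}\right].
\]
Dropping all terms except $w_0:=(0,n+1)$ gives a lower bound, and applying the strong Markov property at $\bar\tau_{L_N}$ reduces the theorem to the two estimates
\[
(\mathrm A)\ \inf_{z\in L_N}E_z\!\left[\#\{k<\bar\tau_{U_n}:S_k\in L_N\}\right]\ge cN,\qquad (\mathrm B)\ P_{w_0}\!\left(\bar\tau_{L_N}<\bar\tau_{U_n}\right)\ge c\sqrt{n}/N.
\]
Estimate $(\mathrm A)$ follows from a one-step analysis: only the downward step from $z\in L_N$ allows $\bar\tau_{U_n}<\tau_{L_N}$, and from $(z_1,N-1)$ a union bound controls this by $P(\bar\tau_{L_0}<\bar\tau_{L_N})\le 1/N$ (gambler's ruin in the $y$-coordinate) plus $P(\bar\tau_{V_n}<\bar\tau_{L_N})\le 1/(N-n)$ (the $y$-coordinate must first descend past level $n$). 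Hence $P_z(\tau_{L_N}<\bar\tau_{U_n})\ge 1-C/N$ uniformly in $z\in L_N$, and a geometric-series bound over successive excursions to $L_N$ yields $(\mathrm A)$.

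The heart of the proof is $(\mathrm B)$. The target $\sqrt n/N$ is dictated by the continuum picture: the conformal map $\phi(z)=\sqrt{z^2+n^2}$ sends the slit half-plane to the upper half-plane with $w_0\mapsto i\sqrt{2n+1}$, and one-dimensional gambler's ruin in the image then gives the bound. On the lattice I would factor $(\mathrm B)$ as a product of a Beurling-type escape, $(\mathrm a)$ $P_{w_0}(\bar\tau_{\partial B(y_n,2n)\cap\mathcal H}<\bar\tau_{U_n})\ge c/\sqrt n$, and an ascent, $(\mathrm b)$ from any first-exit point on $\partial B(y_n,2n)\cap\mathcal H$ the walk hits $L_N$ before $U_n$ with probability $\ge cn/N$. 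Part $(\mathrm b)$ is routine: by the half-plane Poisson kernel the first-exit distribution places a positive fraction of its mass on points of height $\asymp n$, from which gambler's ruin in the $y$-coordinate gives $n/N$, and since the slit is at horizontal distance $\asymp n$ the probability of returning to $V_n$ before reaching $L_N$ is uniformly bounded below $1$.

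Part $(\mathrm a)$ is the genuine obstacle. The natural candidate is $\psi(x,y):=\mathrm{Im}\sqrt{(x+iy)^2+n^2}$, which vanishes on $U_n$, satisfies $\psi(w_0)=\sqrt{2n+1}$, is bounded by $Cn$ on $\partial B(y_n,2n)\cap\mathcal H$, and is harmonic on the continuum slit domain. Its discrete Laplacian obeys $|\Delta^{\mathrm{disc}}\psi|\lesssim n^2/|z^2+n^2|^{5/2}$, with total mass $O(1/\sqrt n)$ on $B(y_n,2n)\cap\mathcal H$, concentrated near the tip where $\psi$ has a square-root singularity. Applying optional stopping to the martingale $\psi(S_k)-\sum_{j<k}\Delta^{\mathrm{disc}}\psi(S_j)$ at $\bar\tau_{U_n\cup\partial B(y_n,2n)}$ yields
\[
\sqrt{2n+1}\le Cn\cdot P_{w_0}(\text{exit on }\partial B)+E_{w_0}\!\left[\sum_{k<\bar\tau}\Delta^{\mathrm{disc}}\psi(S_k)\right],
\]
and the error term is $o(\sqrt n)$ because the killed Green function on $B(y_n,2n)\cap\mathcal H$ is $O(\log n)$. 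This delivers $(\mathrm a)$ and, combined with $(\mathrm b)$ and $(\mathrm A)$, gives $H_{U_n,N}(y_n)\ge c\sqrt n$. The delicate point, and main technical obstacle, is the control of the discrete Laplacian error of $\psi$ near the tip; everything else reduces to routine gambler's-ruin and Poisson-kernel estimates.
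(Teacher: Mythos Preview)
Your overall architecture matches the paper's: both convert $H_{U_n,N}(y_n)$ to an expected number of visits to $L_N$, factor through the strong Markov property to reduce to $P(\text{reach }L_N\text{ before }U_n)\gtrsim \sqrt n/N$, and split this into an escape-from-the-tip step of order $n^{-1/2}$ followed by a gambler's-ruin step of order $n/N$. Your estimates $(\mathrm A)$ and $(\mathrm b)$ are routine and essentially what the paper does.

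The gap is in $(\mathrm a)$. Your discrete-Laplacian bound $|\Delta^{\mathrm{disc}}\psi|\lesssim n^2/|z^2+n^2|^{5/2}$ is off by a factor of $n^2/|z^2+n^2|$: since $\psi=\mathrm{Im}\,f$ with $f$ holomorphic, the leading error is governed by $f^{(4)}$, and one computes $f''(z)=n^2(z^2+n^2)^{-3/2}$, hence $|f^{(4)}(z)|\asymp n^4/|z^2+n^2|^{7/2}$ near the tip. With the correct bound the total mass of $|\Delta^{\mathrm{disc}}\psi|$ over $B(y_n,2n)$ is $\Theta(\sqrt n)$, not $O(1/\sqrt n)$. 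Indeed a direct evaluation at $w_0=(0,n+1)$ already gives $\Delta^{\mathrm{disc}}\psi(w_0)\approx -0.14\sqrt n$; since $G(w_0,w_0)\ge 1$, this single term contributes $\Theta(\sqrt n)$ to the error with the \emph{wrong} sign, and can cancel the main term $\psi(w_0)=\sqrt{2n+1}$. So the optional-stopping inequality as written does not yield $(\mathrm a)$; the square-root singularity of $\psi$ at the tip is exactly where the discretisation error concentrates, and no $O(\log n)$ Green-function bound rescues it.

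The paper circumvents this entirely by a purely discrete argument. It tracks the one-dimensional ladder walk $T_k$ formed by successive returns of $S$ to the $y$-axis, proves an overshoot estimate $P_0(T_\rho<-r)\ge c\,r^{-1/2}$ from the explicit Green-function identity $G(j,l)=\sum_{m\le j\wedge l}v(j-m)v(l-m)$ with $V(n)\sim C\sqrt n$ (Kesten's Lemma~5), and then uses a reflection/monotonicity lemma in the $L^1$-diamond to show that, conditional on escaping $B_1(y_n,n/3)$ before returning to the slit, the walk exits through the upper half with probability at least $1/2$. From there the invariance principle gives a uniformly positive chance of reaching $L_{2n}$ before $U_n$. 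This avoids any continuum-vs-lattice comparison near the tip; if you want to salvage your conformal-map approach, you would need either a different test function that is genuinely discrete-harmonic near the tip, or a separate argument handling the $O(1)$ sites adjacent to $y_n$.
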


With Proposition \ref{proposition_finite}, Theorem \ref{theorem: uniform_path} and \ref{theorem: maximum_path} and the bounds estimates in their proofs, we can further show that 

\begin{theorem}
\label{theorem: theorem: uniform_2}
There are constants $0<c,C<\infty$ such that for any finite and connected $B$ in $\mathcal{H}$,
\beq
\label{uniform upper bound_finite_total}
H_B\le C\left(\min_{x\in B}\{x_2\}+|B|\right),
\eeq
while 
\beq
\label{uniform lower bound_finite_total}
H_B\ge c \max_{x\in B}\{x_2\} \log^{-1}\left(\max_{x\in B}\{x_2\}\right), 
\eeq
\beq
\label{uniform lower bound_finite_total_1}
H_B\ge 1
\eeq
when $ \max_{x\in B}\{x_2\}=0$. 
\end{theorem}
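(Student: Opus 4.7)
The plan is to treat the three inequalities separately. The lower bounds will follow from monotonicity (Proposition \ref{proposition_finite}) together with a single-vertex Green's function estimate, while the upper bound requires a first-passage decomposition at height $h:=h_{\min}=\min_{x\in B}\{x_2\}$ followed by iterative descent to $L_0$.

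For \eqref{uniform lower bound_finite_total_1}, if $\max_{x\in B}\{x_2\}=0$ then $B\subset L_0$. Picking any $x\in B$, Proposition \ref{proposition_finite} gives $H_B\ge H_{\{x\}}$, and since $\{x\}\cup L_0=L_0$, the definition reduces to $H_{\{x\},N}(x)=\sum_{z\in L_N}P_z(S_{\bar\tau_{L_0}}=x)$, which equals $1$ by horizontal translation invariance of the hitting distribution of $L_0$ from height $N$. For the main lower bound \eqref{uniform lower bound_finite_total}, set $M=\max_{x\in B}\{x_2\}$ and pick $x^*\in B$ achieving the maximum. Proposition \ref{proposition_finite} again yields $H_B\ge H_{\{x^*\}}$, which by translation invariance depends only on $M$. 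Expressing the single-vertex measure in terms of the Green's function $G_{\mathcal H}$ of simple random walk killed on $L_0$,
\[
H_{\{x^*\},N}(x^*)=\frac{\sum_{z\in L_N}G_{\mathcal H}(z,x^*)}{G_{\mathcal H}(x^*,x^*)},
\]
the denominator is at most $C\log M$ (the expected number of visits of the 2D walk to its starting point before the killing time $\asymp M^2$, by the local central limit theorem), and the numerator, which by reversibility equals $E_{x^*}[\#\{n<\tau_{L_0}:Y_n=N\}]$ with $Y_n$ the vertical coordinate, is at least $cM$ by the standard Green's function formula for the one-dimensional lazy walk absorbed at $0$. Combining gives $H_{\{x^*\}}\ge cM/\log M$.

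For the upper bound \eqref{uniform upper bound_finite_total}, decomposing each walk from $L_N$ at its first visit to $L_h$ and using the translation identity $\sum_{z\in L_N}P_z(S_{\tau_{L_h}}=w)=1$ for every $w\in L_h$, I obtain
\[
H_B=A+\sum_{w\in L_h\setminus B}P_w(\tau_B<\tau_{L_0}),
\]
where $A$ collects walks that hit $B$ on or before the first visit to $L_h$. The same translation identity applied just above the top of $B$ bounds $A$ by a constant times the horizontal extent of $B$, hence by $C|B|$. To control the second sum, I set $M_k=\sum_{w\in L_k\setminus B}P_w(\tau_B<\tau_{L_0})$ and iterate: the strong Markov property at $\sigma_{k-1}=\tau_{L_{k-1}}$ gives $M_k\le D_k+M_{k-1}$, where $D_k=\sum_{w\in L_k\setminus B}P_w(\tau_B<\sigma_{k-1})$ is an ``upward excursion'' contribution. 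A one-step calculation for walks already on $L_0$ yields $M_0\le\tfrac14 M_1$, and telescoping produces $M_h\le C\sum_{k=1}^h D_k$. Finally each $D_k$ is bounded by combining gambler's ruin on the vertical coordinate with a standard random-walk horizontal-spread estimate, yielding $\sum_{k=1}^h D_k\le Ch$ and hence $H_B\le C(|B|+h)$.

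The main obstacle is this last step: obtaining an additive $O(h)$ bound on $\sum_k D_k$, rather than the weaker multiplicative bound such as $O(h\cdot|B|^{1/2})$ that would follow from naively summing Theorem \ref{theorem: uniform_path} across all heights. A more robust alternative is to use Proposition \ref{proposition_finite} to embed $B$ into the finite connected set $B\cup P$, where $P$ is the vertical segment from the lowest vertex of $B$ down to $L_0$; since $|B\cup P|\le|B|+h$ and this enlarged set touches $L_0$, one reduces to bounding $H_{B\cup P}$ for a set adjacent to the boundary, where the identity $H_{B'',N}(x)=H_{B''\cup L_0,N}(x)$ (valid for $N\ge 1$ since $L_N\cap L_0=\emptyset$) allows Theorem \ref{theorem: uniform_path} to be applied vertex by vertex to $B\cup P$.
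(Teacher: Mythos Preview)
Your treatment of the two lower bounds is fine and essentially matches the paper: Proposition \ref{proposition_finite} reduces to a single vertex, and your Green's function ratio
\[
H_{\{x^*\},N}(x^*)=\frac{\sum_{z\in L_N}G_{\mathcal H}(z,x^*)}{G_{\mathcal H}(x^*,x^*)}
\]
is correct; the numerator is exactly the quantity computed in \eqref{measure to expectation}--\eqref{1_dimensional_mean} to be $4M$, and the denominator estimate $G_{\mathcal H}(x^*,x^*)\le C\log M$ is the content of the paper's Lemma \ref{lemma:lower_finite}.

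The upper bound, however, has a genuine gap. Your height-decomposition correctly reduces to the case $h=0$ (your term $A$ is precisely the total harmonic measure of $B$ relative to the shifted floor $L_h$, so bounding $A\le C|B|$ is the same problem with $B$ touching the boundary). But your ``more robust alternative'' for that case --- applying Theorem \ref{theorem: uniform_path} vertex by vertex to $B\cup P$ --- does not give $O(|B\cup P|)$. Theorem \ref{theorem: uniform_path} yields $H_{B',N}(x)\le C\sqrt{x_2}$, and summing over $x\in B'$ gives only $\sum_{x\in B'}C\sqrt{x_2}$; for $B'=V_n$ this is of order $n^{3/2}$, not $n$. So the final sentence of your proposal does not close the argument.

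What the paper does instead is an induction on $|B|$ in the case $B\cap L_0\neq\varnothing$. One first proves (Lemma \ref{lemma introduction connected}) that any finite connected set with $|B|\ge2$ has at least two ``removable'' vertices whose deletion preserves connectivity; this guarantees one can delete some $x\in B$ while keeping $\tilde B=B\setminus\{x\}$ connected and still intersecting $L_0$. Then a strong-Markov computation gives the exact identity
\[
H_{B,N}-H_{\tilde B,N}=H_{B,N}(x)\,P_x(\tau_{L_0}<\tau_{\tilde B}).
\]
Theorem \ref{theorem: uniform_path} bounds the first factor by $C\sqrt{x_2}$, and the crucial missing ingredient is a matching bound $P_x(\tau_{L_0}<\tau_{\tilde B})\le C x_2^{-1/2}$ on the second factor. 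This holds because $\tilde B$ is connected and reaches $L_0$, so it contains a path of diameter $\ge x_2$ starting next to $x$; the escape probability is then controlled by Kesten's harmonic-measure bound (Lemma \ref{lemma: 2.15} combined with Theorem 1 of \cite{harmonic_measure_1987}). The product is $\le C$, and the induction closes. Your proposal never invokes this escape-probability estimate, and without it the $\sqrt{x_2}$ from Theorem \ref{theorem: uniform_path} cannot be cancelled.
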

And again, we also have the total harmonic measure is maximized (up to multiplying a constant) by the vertical line segment $V_n$ over all connected finite subsets with the same cardinality and intersecting $L_0$. 
\begin{theorem}
\label{theorem: theorem: maximum_2}
There is a constant $c>0$ such that for any $n$,
\beq
\label{maximum_finite_total}
H_{V_n}\ge cn. 
\eeq

\end{theorem}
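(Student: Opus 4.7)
The identity
\[
H_{V_n}=\sum_{y\in\partial^{out}V_n}\hat H_{V_n}(y)=\lim_{N\to\infty}\sum_{z\in L_N}P_z(\tau_{V_n}\le\tau_{L_0})
\]
reduces the claim to showing that $\sum_{z\in L_N}P_z(\tau_{V_n}\le\tau_{L_0})\ge cn$ for every $N$ sufficiently larger than $n$. The plan is to restrict the outer sum to $z=(k,N)$ with $|k|\le N$ and establish the uniform pointwise lower bound
\[
P_{(k,N)}(\tau_{V_n}\le\tau_{L_0})\ge c'\frac{n}{N}\qquad\text{for all }|k|\le N,
\]
after which summing over the $\Theta(N)$ eligible starting points produces a total of order $n$, as desired.

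The pointwise bound is obtained by a two-step decomposition parallel to the proof of Theorem \ref{theorem: maximum_path}. Since $n<N$, the walk from $(k,N)$ almost surely visits $L_n$ strictly before $L_0$; set $T=\bar\tau_{L_n}$ and $S_T=(J,n)$. The strong Markov property gives
\[
P_{(k,N)}(\tau_{V_n}\le\tau_{L_0})\ge\sum_{|j|\le n}P_{(k,N)}(J=j)\,P_{(j,n)}(\tau_{V_n}\le\tau_{L_0}).
\]
For the first factor, the discrete analog of the Cauchy hitting density for a horizontal line in the upper half-plane, obtained from the reflected Green's function of the half-plane walk, yields $P_{(k,N)}(J=j)\ge c_1/N$ uniformly in $|k|\le N$ and $|j|\le n$, hence $P_{(k,N)}(|J|\le n)\ge c_1 n/N$. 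This is exactly the ingredient driving Theorem \ref{theorem: maximum_path}, but applied on the coarse window $|J|\le n$ rather than the tight window $|J|\le\sqrt n$; the ratio $\sqrt n$ of window sizes is precisely the factor separating the two results.

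For the second factor, I would show $P_{(j,n)}(\tau_{V_n}\le\tau_{L_0})\ge c_2>0$ uniformly in $|j|\le n$. Under the conformal map $\phi(z)=\sqrt{z^2+n^2}$, which identifies $\mathcal{H}\setminus V_n$ with the upper half-plane and sends the two sides of $V_n$ onto the interval $[-n,n]\subset\RR$, the point $j+in$ with $|j|\le n$ is sent to a point of modulus $O(n)$ in the image half-plane, from which the Brownian harmonic measure of $[-n,n]$ is bounded below by a positive constant; an invariance-principle coupling on scale $n$ transfers this bound to the simple random walk. Combining the two ingredients gives $P_{(k,N)}(\tau_{V_n}\le\tau_{L_0})\ge c_1c_2 n/N$ on $|k|\le N$, and summing over $k$ produces $H_{V_n}\ge cn$ after letting $N\to\infty$. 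The main obstacle is the uniform discrete Cauchy lower bound on $P_{(k,N)}(J=j)$ over the full window $|k|\le N$, $|j|\le n$; this is standard discrete potential theory and should already be a byproduct of the proof of Theorem \ref{theorem: maximum_path}, while the quadrant estimate is routine once the conformal picture is in place.
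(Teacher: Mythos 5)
Your proof is correct in outline but takes a genuinely different route from the paper. The paper argues by a telescoping/inductive scheme: it applies the increment identity
\[
H_{V_n,N}-H_{V_{n-1},N}=H_{V_n,N}(y_n)\,P_{y_n}\!\left(\tau_{L_0}<\tau_{V_{n-1}}\right),
\]
which is a special case of its general formula for removing one point from a finite set, and then bounds each increment below by a constant by combining Theorem~\ref{theorem: maximum_path} (namely $H_{V_n,N}(y_n)\ge c\sqrt{n}$) with the escaping estimate $P_{y_n}(\tau_{L_0}<\tau_{V_n})\ge c/\sqrt{n}$ drawn from Lemma~\ref{lemma: 6} and Lemma~\ref{lemma: spatial}. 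You instead bound $\sum_{z\in L_N}P_z(\tau_{V_n}\le\tau_{L_0})$ directly, decomposing each term through the first passage to $L_n$, which bypasses Theorem~\ref{theorem: maximum_path} entirely. The paper's route is self-contained given its earlier theorems; your route is more transparent (one sees where the factor $n$ comes from at a glance) but imports two facts the paper never records: a uniform discrete Poisson-kernel lower bound $P_{(k,N)}(J=j)\ge c_1/N$ for $|k|\le N,\ |j|\le n$, and a uniform positive lower bound $P_{(j,n)}(\tau_{V_n}\le\tau_{L_0})\ge c_2$ for $|j|\le n$. Both are standard, but neither is a byproduct of Theorem~\ref{theorem: maximum_path}'s proof as you suggest — that proof uses only one-dimensional gambler's-ruin bounds for vertical displacement and the escape/spatial estimates on an $L_1$ ball, not a two-sided Poisson kernel estimate — so a complete write-up would need to supply them (the first is, e.g., in Lawler-Limic; the second follows from your conformal map plus an invariance-principle argument with some care near the tip and foot of the slit). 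One small point worth making explicit: your displayed inequality is not a direct application of the strong Markov property, because on the event $\{\tau_{V_n}\le T\}$ the walk may already have hit $V_n$ before $L_n$. The exact decomposition is
\[
P_{(k,N)}(\tau_{V_n}\le\tau_{L_0})=P_{(k,N)}(\tau_{V_n}\le T)+\sum_j P_{(k,N)}(T<\tau_{V_n},J=j)\,P_{(j,n)}(\tau_{V_n}\le\tau_{L_0}),
\]
and your right-hand side $\sum_j P_{(k,N)}(J=j)P_{(j,n)}(\cdot)$ differs from this only by the term $\sum_j P_{(k,N)}(\tau_{V_n}\le T,\,J=j)\bigl[P_{(j,n)}(\cdot)-1\bigr]\le 0$, so the inequality does hold, but the justification should be spelled out.
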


With the stationary harmonic measure bounded in Theorem \ref{theorem: uniform_path}, we are now able to define our DLA in the upper half plane as a continuous time stochastic process $A_t, t\ge 0$ taking values on finite subsets of $\mathcal{H}$. First we have $A_0=\{0\}$. For each $t\ge 0$. $A_t$ grows at a Poisson rate of $H_{A_t}$ 
and add a new point on $\partial^{out}A_t$ according to the probability distribution 
$$
\tilde p(A_t,y)=\frac{\hat H_{A_t}(y)}{H_{A_t}}, \ y\in \mathcal{H}. 
$$
Similarly, we can also define the discrete DLA model $\{A_n\}_{n=0}^\infty$ in $\mathcal{H}$ which is the embedded Markov chain of $A_t$. I.e., at each $n$, $A_{n+1}=A_n\cup \{y\}$ where $y$ is sampled according to $\tilde p(A_n,y)$. 

First, by introducing a pure growth interacting particle system that dominates the continuous time process, we show that $A_t$ is well defined and estimate an upper bound on the growth rate of its arms. For any finite $A$ define 
$$
\|A\|=\max\{\|x\|, x\in A\}. 
$$ 
\begin{theorem}
\label{theorem: DLA_1}
$A_t$ is well defined on $t\in[0,\infty)$. And for any $\ep>0$, we have with probability one
\beq 
\label{growth_DLA_1}
\limsup_{t\to\infty}t^{-2-\ep}\|A_t\|=0. 
\eeq
\end{theorem}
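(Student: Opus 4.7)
I will first verify non-explosion of $A_t$ via the total-mass bound of Theorem~\ref{theorem: theorem: uniform_2}, then construct a Richardson-type interface process $\Gamma_t$ dominating $A_t$ using the pointwise bound of Theorem~\ref{theorem: uniform_path}, and finally upper bound $\|\Gamma_t\|$ via a first-passage percolation lower bound on passage times.

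\emph{Well-definedness and coupling.} Since $0\in A_t$ for all $t$, one has $\min_{x\in A_t}x_2=0$ and hence \eqref{uniform upper bound_finite_total} gives $H_{A_t}\le C|A_t|$. The cardinality $|A_t|$ is thus stochastically dominated by a linear-rate pure-birth (Yule) process, which is non-explosive, so $A_t$ is well defined on $[0,\infty)$. Next, Theorem~\ref{theorem: uniform_path} together with the neighbour bound $\hat H_{A_t}(y)\le\sum_{x\sim y,\,x\in A_t}H_{A_t}(x)$ yields $\hat H_{A_t}(y)\le 4C\sqrt{y_2+1}$ for $y\in\partial^{out}A_t$. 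Set $f(y)=4C\sqrt{y_2+1}$, place an independent Poisson clock of rate $f(y)$ at each $y\in\mathcal{H}\setminus L_0$, and define $\Gamma_t$ with $\Gamma_0=\{0\}$ so that a ring at $y$ adjoins $y$ to $\Gamma$ iff $y\in\partial^{out}\Gamma_{t^-}$. Thinning the clock at $y$ by the factor $\hat H_{A_t}(y)/f(y)\in[0,1]$ couples $A_t\subseteq\Gamma_t$ almost surely, so it suffices to control $\|\Gamma_t\|$.

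\emph{First-passage percolation bound.} By memorylessness, $\Gamma_t$ can be realised as vertex-weighted first-passage percolation with independent weights $\xi_y\sim\mathrm{Exp}(f(y))$: one has $y\in\Gamma_t$ iff $T(y):=\min_{\text{paths }0\to y}\sum_{z\in\text{path}}\xi_z\le t$. Any nearest-neighbour path of length $L$ from $0$ visits heights $\le L$, so $\prod_{i=1}^L f(z_i)\le(C'\sqrt{L+1})^L$ along it. The generalised Chernoff bound $P(\sum_i\xi_{z_i}<x)\le(ex/L)^L\prod_i f(z_i)$ with $x=c\sqrt n$ yields
\begin{equation*}
P\Bigl(\textstyle\sum_{i=1}^L\xi_{z_i}<c\sqrt n\Bigr)\le\bigl(eC'c\sqrt{n(L+1)}/L\bigr)^{L}.
\end{equation*}
A union bound over the $\le 4n\cdot 4^L$ length-$L$ nearest-neighbour paths terminating at $\{\|y\|=n\}$, summed over $L\ge n$, gives $P(T_n<c\sqrt n)\le e^{-c'n}$ provided $c$ is chosen so that $4eC'c<1$. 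Borel--Cantelli then shows $T_n\ge c\sqrt n$ for all but finitely many $n$ a.s., so $\|A_t\|\le\|\Gamma_t\|\le(t/c)^2$ eventually, and since $t^2=o(t^{2+\epsilon})$, \eqref{growth_DLA_1} follows.

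\emph{Main obstacle.} The delicate step is the FPP lower bound: because $f(y)$ increases with height, paths that wander upward have individually small edge weights, so the combinatorial entropy $4^L$ of length-$L$ paths must be dominated by the Gamma-type tail of a sum of exponentials with unequal rates. Keeping $\prod_i f(z_i)$ uniformly controlled along all candidate paths—by exploiting that a length-$L$ path cannot reach heights above $L$—is what makes the two exponents balance and closes the union bound.
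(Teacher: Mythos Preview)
Your proof is correct and follows a genuinely different route from the paper's.

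The paper dominates $A_t$ by an interacting particle system $\bar\xi_t$ in which each occupied site $x$ gives birth to each neighbour at rate $\sqrt{x_2}\vee 1$, and establishes well-definedness of $\bar\xi_t$ (for \emph{arbitrary} initial configurations in $\{0,1\}^{\mathcal H}$) through an elaborate iterative scheme on nested space--time boxes: the one-step path-counting bound $P(\text{Rad}_{t}(x)\ge n)\le(4t)^n\sqrt{\prod_i(x^{(2)}+i)}$ is useful only for small $t$, so $[0,T]$ is chopped into pieces $\delta_k\asymp(x^{(2),k})^{-1/2}$ whose sum diverges while the failure probabilities remain summable. The same iteration yields the growth estimate, but only in the form $\|A_t\|=O(t^{2/(1-2\gamma)})$ for each $\gamma\in(0,1/2)$, hence $o(t^{2+\epsilon})$.

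You argue more directly on both fronts. For non-explosion you invoke the total-mass bound $H_{A_t}\le C|A_t|$ of Theorem~\ref{theorem: theorem: uniform_2} and compare $|A_t|$ to a Yule process---shorter than the paper's argument, though it relies on the initial configuration being a single point. For growth you dominate by a site-rate Richardson process $\Gamma_t$, pass to its first-passage-percolation representation, and control passage times via the simplex bound $P\bigl(\sum_i\xi_i<x\bigr)\le(ex/L)^L\prod_i f(z_i)$ together with the geometric constraint that a length-$L$ path from the origin stays at heights $\le L$. This actually yields the sharper almost-sure bound $\|A_t\|=O(t^2)$, strictly stronger than the paper's $o(t^{2+\epsilon})$.

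What the paper's heavier machinery buys is robustness: its particle-system construction works from any initial configuration, which is needed for the stationary DLA announced in the follow-up work. Your Yule comparison breaks down for infinite initial data, though your FPP growth bound could plausibly be adapted to, say, initial configuration $L_0$.
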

Furthermore, we show that for any time $t$, $\|A_t\|$ has a finite $m$th order moment for all $m\ge 1$. 
\begin{theorem}
\label{theorem: moment}
For any integer $m\ge 1$ and any $t\ge 0$
\beq
\label{moment}
E\big[\|A_t\|^m\big]<\infty. 
\eeq
\end{theorem}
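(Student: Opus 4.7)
The plan is to dominate the cardinality $|A_t|$ by a Yule process, and then to use the trivial bound that $\|A_t\|\le|A_t|$. The key observation is that, since $(0,0)\in A_0\subseteq A_t$ for all $t\ge 0$, we always have $\min_{x\in A_t}\{x_2\}=0$, so the upper bound \eqref{uniform upper bound_finite_total} from Theorem~\ref{theorem: theorem: uniform_2} specializes to
\[
H_{A_t}\le C|A_t|.
\]
Viewed purely through its cardinality, the continuous-time process $|A_t|$ is a state-dependent pure birth process whose rate of jumping out of state $n$ is at most $Cn$. Moreover, every new point sits in $\partial^{out}A_t$, hence at $L^1$-distance one from some point of $A_t$; starting from $\|A_0\|=0$, this gives $\|A_t\|\le |A_t|-1$ almost surely for every $t$.

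Next, let $Y_t$ be a Yule process on $\{1,2,\dots\}$ with individual birth rate $C$, started at $Y_0=1$. Since the instantaneous jump rate of $|A_t|$ at state $n$ is bounded above by the jump rate $Cn$ of $Y_t$, a standard monotone coupling of pure birth chains (for instance, obtaining $|A_t|$ by thinning the jump times of $Y_t$ according to the ratio $H_{A_t}/(C|A_t|)\le 1$) gives $|A_t|\le Y_t$ almost surely for every $t\ge 0$. Since $Y_t$ is geometrically distributed with parameter $e^{-Ct}$, it has finite moments of every order at every fixed $t$, and therefore
\[
E\bigl[\|A_t\|^m\bigr]\le E\bigl[Y_t^m\bigr]<\infty,
\]
which is \eqref{moment}.

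The one substantive input is the rate bound $H_{A_t}\le C|A_t|$, which is precisely Theorem~\ref{theorem: theorem: uniform_2} combined with the observation that the origin always lies in $A_t$. Once that bound is in hand, the argument bypasses any fine information about which specific site gets selected at each jump and reduces to a classical comparison with a Yule process, so I do not expect any real obstacle in the domination step.
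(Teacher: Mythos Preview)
Your argument is correct and considerably more direct than the paper's. The paper does not use the total harmonic measure bound $H_{A_t}\le C|A_t|$ at all for this theorem; instead it works with the dominating interacting particle system $\bar\xi_t$ constructed in Section~\ref{section: particle system}, bounds $\|A_t\|$ by the radius $\mathcal{I}_{0,Ct}(0)$ of the forward influence set, and then runs the same iterative space--time scheme as in Lemma~\ref{lemma_well_definition_2} (with sharpened constants depending on $m$) to extract a polynomial tail bound on $\mathcal{I}_{0,t}(0)$.

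Your route exploits the extra structure of the specific initial condition $A_0=\{(0,0)\}$: because the origin never leaves the aggregate, $\min_{x\in A_t}x_2=0$ for all $t$, so Theorem~\ref{theorem: theorem: uniform_2} collapses to $H_{A_t}\le C|A_t|$ and the cardinality process is dominated by a linear-rate Yule process, whose geometric marginals have all moments. The paper's longer argument is not wasted effort, however: it bounds moments of the dominating process $\bar\xi_t$ itself, which is what is needed for the stationary DLA with infinite initial data announced in the remarks, where your total-measure bound is unavailable. For the theorem as stated, though, your proof is cleaner.

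One small point: your thinning description of the coupling is slightly informal because the acceptance probability $H_{A_{t-}}/(C|A_{t-}|)$ depends on the full configuration, not just on $|A_{t-}|$. The clean way to phrase it is to first realise the embedded discrete chain $(A_{\sigma_n})_{n\ge 0}$, observe that conditionally on it the holding times $\sigma_n-\sigma_{n-1}$ are independent exponentials with rates $H_{A_{\sigma_{n-1}}}\le Cn$, and then couple these to the Yule holding times via common uniforms. This yields $\sigma_n\ge\tau_n$ and hence $|A_t|\le Y_t$ almost surely.
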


\begin{remark}
In our construction we are able to define the dominating interacting particle system starting from any initial configuration in $\{0,1\}^{\mathcal{H}}$, whose growth rate is given by the upper bound of the stationary harmonic measure found in Theorem \ref{theorem: uniform_path} . This, together with \cite{when_harmonic_measure_0}, allows us to define a horizontally translation invariant infinite DLA on $\mathcal{H}$ and estimate its (non-zero) growth rate. We call this the stationary DLA model, and it will be presented in \cite{Stationary_DLA}. We refer the reader to look at recent results on other stationary aggregation processes \cite{MR3619794,Berger-Kagan-Procaccia}.
\end{remark}

For the discrete time process let $h_n=\max_{x\in A_n}\{x_2\}$. By Theorem \ref{theorem: uniform_path} and \eqref{uniform lower bound_finite_total}, we see that the probability that a new point $y$ is added to the aggregation $A_n$ is no larger than $\log(h_n)/\sqrt{h_n}$. Then the Borel-Cantelli argument in Step (ii) of \cite{DLA_long} easily gives us a stronger upper bound on $h_n$:

\begin{theorem}
\label{theorem: DLA_2}
For any $\epsilon>0$, we have with probability one
$$
\limsup_{n\to\infty} n^{-\epsilon-2/3}h_n=0. 
$$
\end{theorem}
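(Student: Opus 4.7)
The plan is to extract a per-step bound on the transition probabilities from the harmonic-measure estimates already established, and then plug this into a Borel--Cantelli argument along a geometric subsequence, following Step (ii) of Kesten's original DLA analysis in \cite{DLA_long}.

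First I will establish the per-step bound. Fix $n$ with $h_n\ge 1$ and any $y\in\partial^{\mathrm{out}}A_n$. Each lattice neighbour $x$ of $y$ that lies in $A_n$ satisfies $x_2\le h_n$, so Theorem \ref{theorem: uniform_path} gives $H_{A_n}(x)\le C\sqrt{h_n}$. Summing over the (at most four) neighbours yields
$$
\hat H_{A_n}(y)\le \sum_{x\sim y,\ x\in A_n} H_{A_n}(x)\le 4C\sqrt{h_n}.
$$
On the other hand, \eqref{uniform lower bound_finite_total} gives $H_{A_n}\ge c\, h_n/\log h_n$. Dividing,
$$
\tilde p(A_n,y)=\frac{\hat H_{A_n}(y)}{H_{A_n}}\le C'\,\frac{\log h_n}{\sqrt{h_n}},
$$
uniformly over $y\in\partial^{\mathrm{out}}A_n$ and over admissible configurations. (The case $h_n=0$ is trivial by \eqref{uniform lower bound_finite_total_1}.)

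Next I will transfer this per-step estimate into the desired a.s.\ growth bound. Fix $\epsilon>0$ and set $n_k=2^k$ and $h_k^*=\lceil n_k^{2/3+\epsilon}\rceil$. The event $\{h_{n_{k+1}}\ge h_{k+1}^*\}$ is contained in the union, over steps $m\le n_{k+1}$, of the events that the newly added particle has second coordinate at least $h_{k+1}^*$. Conditional on $\mathcal{F}_{m-1}$, such an event has probability at most a union bound over the candidate sites at that height, each term bounded via the per-step estimate by a quantity of order $\log(h_{k+1}^*)/\sqrt{h_{k+1}^*}=O\bigl(n_{k+1}^{-1/3-\epsilon/2}\log n_{k+1}\bigr)$. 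Carrying out Kesten's book-keeping --- which trims the count of candidate sites by restricting attention to first-passage times to each new height and exploiting connectedness of $A_m$ --- one obtains $\sum_k P\bigl(h_{n_{k+1}}\ge h_{k+1}^*\bigr)<\infty$. Monotonicity of $h_n$ in $n$ and the first Borel--Cantelli lemma then give $\limsup_n n^{-2/3-\epsilon}h_n=0$ almost surely.

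The delicate point is the union-bound book-keeping in the second step: a naive count of candidate boundary sites at height $h_{k+1}^*$ is of order $n_{k+1}$, which alone is too crude to close the argument. Kesten's insight --- restricting to first-passage times to each new height and exploiting that $A_m$ is rooted at the origin and connected --- trims this count to an exponent compatible with the target. The upper-half-plane geometry and absorbing boundary $L_0$ do not interact with this combinatorial portion of the argument, so once the per-step bound is in hand, Kesten's strategy transfers to the current setting essentially verbatim.
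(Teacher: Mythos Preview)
Your proposal is correct and follows essentially the same route as the paper. The paper itself gives no more than what you do: it derives the per-point bound $\tilde p(A_n,y)\le C\log(h_n)/\sqrt{h_n}$ from Theorem~\ref{theorem: uniform_path} and \eqref{uniform lower bound_finite_total}, and then simply cites ``the Borel--Cantelli argument in Step~(ii) of \cite{DLA_long}'' without further detail; your sketch of that step (geometric subsequence, acknowledgement that the naive union bound must be refined via Kesten's first-passage bookkeeping) is in fact more explicit than the paper's own treatment.
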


The structure of this paper is as follows: In Section \ref{section: basic_properties} we prove the more basic properties of the stationary harmonic measure, i.e., Proposition \ref{proposition_well_define} and \ref{proposition_finite}. Theorem \ref{theorem: maximum_path} is proved in Section \ref{section: uniform} by showing that certain arguments in \cite{harmonic_measure_1987} is actually sharp. Then we ``inverse" the argument for vertical line segment and prove Theorem \ref{theorem: maximum_path} in Section \ref{section: maximum}. In Section \ref{section: finite} we use the bounds found the the previous two sections and show Theorem \ref{theorem: theorem: uniform_2} and \ref{theorem: theorem: maximum_2} inductively. In Section \ref{section: particle system}, we use an interacting particle system argument to define the dominating process and prove Theorem \ref{theorem: DLA_1} and \ref{theorem: moment}. After that, Theorem \ref{theorem: DLA_2} follows immediately.

\section{Properties of stationary harmonic measure}
\label{section: basic_properties}

\subsection{Proof of Proposition \ref{proposition_well_define}}
To show Proposition \ref{proposition_well_define}, we first need to verify that the infinite summation defined in \eqref{harmonic measure point old} converges. Note that for $x_2>0$ and any $N>x_2$ and any $z\in L_N\setminus B$,
$$
P_z\left(S_{\bar \tau_{B}}=x\right)=\sum_{k=1}^\infty P_z\left(\bar \tau_{B\cup L_0}=k, S_k=x\right).
$$
And by time reversal and symmetry of simple random walk, we have 
\begin{align*}
P_z\left(\bar \tau_{B}=k, S_k=x\right)&=P_z\left(S_k=x, S_1,S_2,\cdots, S_{k-1}\notin B\cup L_0\right)\\
&=P_{x}\left(S_k=z, S_1,S_2,\cdots, S_{k-1}\notin B\cup L_0\right)\\
&=P_{x}\left(S_k=z, \tau_{B\cup L_0}>k\right).
\end{align*}
Thus
\begin{align*}
P_z\left(S_{\bar \tau_{B}}=x\right)&=\sum_{k=1}^\infty P_{x}\left(S_k=z, \tau_{B\cup L_0}>k\right)\\
&=E_{x}\Big[\text{number of visits to $z$ in time interval }[0,\tau_{B\cup L_0}) \Big].
\end{align*}
Then taking the summation over all $z\in L_N\setminus B$, we have 
\beq
\label{measure to expectation}
H_{B,N}(x)=E_{x}\Big[\text{number of visits to $L_N$ in time interval } [0,\tau_{B\cup L_0})\Big]. 
\eeq
Then noting that $\tau_{L_0}\ge \tau_{B \cup L_0}$, we have 
$$
H_{B,N}(x)\le E_{x}\Big[\text{number of visits to $L_N$ in time interval } [0,\tau_{L_0}]\Big].
$$
Moreover, for $N>x_2$, note that if we trace the jumps on the second coordinate of $S_n$, it gives an (embedded) 1-dimensional simple random walk. We can use the strong Markov property of random walk on stopping time $\bar \tau_{L_N}\wedge \tau_{L_0}$ 
$$
\begin{aligned}
E_{x}&\Big[\text{number of visits to $L_N$ in time interval } [0,\tau_{L_0}]\Big]\\
&=\sum_{w\in L_N}P_x(\tau_{L_N}< \tau_{L_0}, S_{\tau_{L_N}}=w) E_w\Big[\text{number of visits to $L_N$ in time interval} [0,\tau_{L_0}]\Big].
\end{aligned}
$$
Note that for each $w\in L_N$, 
\beq
\label{1_dimensional_mean}
E_w\Big[\text{number of visits to $L_N$ in time interval} [0,\tau_{L_0}]\Big]=\frac{4}{P_{w-(0,1)}\left(\tau_{L_0}<\tau_{L_N} \right)}=4N
\eeq
is actually independent to the choice of $w$, and that for all $N>x_2$
$$
P_x(\tau_{L_N}< \tau_{L_0})=\frac{x_2}{N}.
$$
We have
$$
\begin{aligned}
E_{x}&\Big[\text{number of visits to $L_N$ in time interval } [0,\tau_{L_0}]\Big]=4N\cdot P_x(\tau_{L_N}< \tau_{L_0}) =4x_2. 
\end{aligned}
$$
Thus we have shown that 
\beq
\label{weak bound}
H_{B,N}(x\to y)\le H_{B,N}(x)\le 4x_2<\infty. 
\eeq
Similarly, we can also show that for $x_2=0$, 
\beq
\label{weak bound 2}
H_{B,N}(x\to y)\le H_{B,N}(x)\le 1<\infty. 
\eeq
With $H_{B,N}(x\to y)$ uniformly bounded for all $N$, we next show that $H_{B,N}(x\to y)$ is monotonically decreasing with respect to $N$. I.e., for any $N>M>x_2+1$ we want to show that 
\beq
\label{monotone_harmonic}
H_{B,N}(x\to y)\le H_{B,M}(x\to y). 
\eeq
Recalling that 
$$
H_{B, N}(x\to y)=\sum_{z\in L_N\setminus B} P_z\left(S_{\bar \tau_{B\cup L_0}}=x, \ S_{\bar \tau_{B\cup L_0}-1}=y\right), 
$$
for each $N$ we can define $S^{(0,N)}_n$ be a simple random walk in some probability space $P(\cdot)$ starting at $(0,N)$, and $S^{(k,N)}_n=S^{(0,N)}_n+(k,0)$ for all $k\in \ZZ$. Noting that $S^{(k,N)}_n$ is a simple random walk starting at $(k,N)$, we have
\beq
H_{B, N}(x)=\sum_{k: \ (k,N)\in L_N\setminus B} P\left(S^{(k,N)}_{\bar \tau_{B\cup L_0}}=x, \ S^{(k,N)}_{\bar \tau_{B\cup L_0}-1}=y \right). 
\eeq
Recalling that $N>M>x_2$, a random walk starting at $L_N$ must first visit $L_M$ before it can ever reach $x$. Thus for stopping time 
$$
\bar\tau_{L_M}=\inf\left\{n: \ S^{(0,N)}_n\in L_M\right\}
$$
note that by definition we also have 
$$
\bar\tau_{L_M}=\inf\left\{n: \ S^{(k,N)}_n\in L_M\right\}
$$
and
$$
S^{(k,N)}_{\bar\tau_{L_M}}=k+S^{(0,N)}_{\bar\tau_{L_M}}
$$
for all $k\in \ZZ$. Thus by strong Markov property, we have for each $k$ such that $(k,N)\in L_N\setminus B$
$$
\begin{aligned}
&P\left(S^{(k,N)}_{\bar \tau_{B\cup L_0}}=x, \ S^{(k,N)}_{\bar \tau_{B\cup L_0}-1}=y \right)\\
&=\sum_{j\in Z}P\left(S^{0,N}_{\bar \tau_{L_M}}=(j,M), \ \bar \tau_{L_M}\le \bar \tau_{B-(k,0)} \right)P_{(j+k,M)}\left(S_{\bar \tau_{B\cup L_0}}=x, \ S_{\bar \tau_{B\cup L_0}-1}=y \right)\\
&\le \sum_{j\in Z}P\left(S^{0,N}_{\bar \tau_{L_M}}=(j,M)\right)P_{(j+k,M)}\left(S_{\bar \tau_{B\cup L_0}}=x, \ S_{\bar \tau_{B\cup L_0}-1}=y \right). 
\end{aligned}
$$
Taking summation over all $k$,
\beq
\label{harmonic_decreasing_1}
H_{B, N}(x)\le\sum_{j\in \ZZ} P\left(S^{(0,N)}_{\bar\tau_{L_M}}=(j,M)\right)\sum_{k: \ (k,N)\in L_N\setminus B} P_{(j+k,M)}\left(S_{\bar \tau_{B\cup L_0}}=x, \ S_{\bar \tau_{B\cup L_0}-1}=y \right). 
\eeq
Note that for any $(i,M)\in B$, $P_{(i,M)}\left(S_{\bar \tau_{B\cup L_0}}=x \right)=0$. Thus 
\beq
\label{harmonic_decreasing_2}
\begin{aligned}
&\sum_{k: \ (k,N)\in L_N\setminus B} P_{(j+k,M)}\left(S_{\bar \tau_{B\cup L_0}}=x, \ S_{\bar \tau_{B\cup L_0}-1}=y\right)\\
\le &\sum_{k: \ (k,M)\in L_M\setminus B} P_{(k,M)}\left(S_{\bar \tau_{B\cup L_0}}=x, \ S_{\bar \tau_{B\cup L_0}-1}=y \right)=H_{B,M}(x\to y). 
\end{aligned}
\eeq
Combining \eqref{harmonic_decreasing_1} and \eqref{harmonic_decreasing_2} we have \eqref{monotone_harmonic}. The fact that any monotonically decreasing nonnegative sequence is convergent finishes the proof of Proposition  \ref{proposition_well_define}. \qed

\subsection{Proof of Proposition \ref{proposition_finite}}

To show Proposition \ref{proposition_finite} for finite subsets, recalling the definition and the fact that both $B_1$ and $B_2$ are finite. for any sufficiently large $N$ such that $L_N\cap B_2=\O$, we have 
$$
H_{B_1}=\sum_{x\in B_1}\sum_{z\in L_N} P_z(S_{\bar \tau_{B_1\cup L_0}}=x)
$$
and 
$$
H_{B_2}=\sum_{x\in B_2}\sum_{z\in L_N} P_z(S_{\bar \tau_{B_2\cup L_0}}=x). 
$$
Changing the order of both summations we have 
$$
H_{B_1}=\sum_{z\in L_N}\sum_{x\in B_1} P_z(S_{\bar \tau_{B_1\cup L_0}}=x)=\sum_{z\in L_N}P_z(S_{\bar \tau_{B_1\cup L_0}}\in B_1)=\sum_{z\in L_N}P_z(\tau_{B_1}\le \tau_{L_0})
$$
which is smaller than or equal to 
$$
H_{B_2}=\sum_{z\in L_N}\sum_{x\in B_2} P_z(S_{\bar \tau_{B_2\cup L_0}}=x)=\sum_{z\in L_N}P_z(S_{\bar \tau_{B_2\cup L_0}}\in B_2)=\sum_{z\in L_N}P_z(\tau_{B_2}\le \tau_{L_0})
$$
by \eqref{basic 1}. \qed

\section{Uniform upper bounds on harmonic measure}
\label{section: uniform}

In this section, we improve the linear bound in \eqref{weak bound} to Theorem \ref{theorem: uniform_path}. Without loss of generality we can assume $x_2=n$. According to the definition of $H_{B,N}(x)$ and \eqref{basic 2}, we first note that for any $B'\subset B$, with $x\in B'$ and $L_0\subset B'$,
$$
H_{B,N}(x)\le H_{B',N}(x).
$$
And since $B$ is connected and $L_0\subset B$. There must be a finite nearest neighbor path 
$$
\mathcal{P}_n=\{x=P_0,P_1,P_2,\cdots, P_{k_n}\in L_0\}
$$
connecting $x$ and $L_0$, where $|P_i-P_{i+1}|=1$. And since $d(x,L_0)=n$, $|x-P_{k_n}|\ge n$. Define
$$
m_n=\inf\{i: |P_i-x|\ge n\}
$$
and 
$$
\mathcal{Q}_n=\{P_0,P_1,P_2,\cdots, P_{m_n}\}.
$$
One can immediately see that 
$$
\mathcal{Q}_n\subset B(x,2n). 
$$
Then for $B_n=L_0\cup \mathcal{Q}_n$, to prove Theorem \ref{theorem: uniform_path}, it suffices to show that 
\beq
\label{truncated path}
H_{B_n,N}(x)\le C n^{1/2}. 
\eeq
And since simple random walk is translation invariant, we can without loss of generality assume that $x_1=0$. To show \eqref{truncated path}, we first prove that the inequalities in Lemma 3 and 4 and Inequality (2.15) in \cite{harmonic_measure_1987} are actually asymptotically sharp. 

\subsection{Asymptotic sharpness lemmas}

In this subsection, we will temporally move back to $\ZZ^2$ rather than the upper half plane $\mathcal{H}$. The connection will be shown when we conclude the proof of Theorem  \ref{theorem: maximum_path}. The ``inverses" of both lemmas starts with similar arguments as in their original proof. While the inverse of Lemma 3 is on a more ``natural" direction and its proof is more or less the same, that of Lemma 4 is a more delicate and requires a slightly stronger condition. Once we have the two inverse lemmas, the asymptotic sharpness of (2.15) follows from the decomposition of harmonic measure in \cite{harmonic_measure_1987}. 

Before the asymptotic sharpness results can be shown, we first introduce the discrete Green function used in \cite{harmonic_measure_1987} and quote some of its basic properties.  

\begin{lemma}(Lemma 1 of  \cite{harmonic_measure_1987})
The series 
\beq
\label{discrete Green}
a(x)=\sum_{n=0}^\infty [P_0(S_n=0)-P_n(S_n=x)]
\eeq
converge for each $x\in \ZZ^2$, and the function $a(\cdot)$ has the following properties:

\beq
\label{property_a_1}
a(x)\ge 0, \ \forall x\in \ZZ^2, \ a(0)=0,
\eeq 

\beq
\label{property_a_2}
a\big((\pm 1,0)\big)=a\big((0,\pm 1)\big)=1
\eeq 

\beq
\label{property_a_3}
E_x[a(S_1)]-a(x)=\delta(x,0),
\eeq 
so $a(S_{n\wedge \tau_v}-v)$ is a nonnegative martingale, where $\tau_v=\tau_{\{v\}}$, for any $v\in \ZZ^2$. And there is some suitable $C_0,C_1<\infty$ such that for all $x\not=0$, 
\beq
\label{property_a_5}
x^2\left|a(x)-\frac{1}{2\pi} \log|x|-C_0 \right|\le C_1.
\eeq
\end{lemma}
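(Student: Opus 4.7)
The plan is to follow the standard potential-kernel construction for the 2D simple random walk, which proceeds in three stages: convergence, the mean-value identity, and the logarithmic asymptotic.

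First I would establish convergence of the defining series \eqref{discrete Green}. The local central limit theorem for the 2D simple random walk gives $P_0(S_{2n}=0)=\frac{1}{\pi n}+O(n^{-2})$ with $P_0(S_n=0)=0$ for $n$ odd, and the refined local limit theorem yields $|P_0(S_n=0)-P_0(S_n=x)|=O(|x|^2/n^2)$ once $n\gg |x|^2$. Splitting the sum into $n\le |x|^2$ and $n>|x|^2$ gives absolute convergence of the series for every fixed $x$, so $a(x)$ is well defined and $a(0)=0$ is immediate.

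Next I would prove the discrete harmonicity identity \eqref{property_a_3} by a telescoping computation. Using symmetry of the simple random walk ($P_y(S_n=0)=P_0(S_n=y)$) and the Markov property,
\begin{equation*}
E_x[a(S_1)]=\sum_{n\ge 0}\bigl[P_0(S_n=0)-P_x(S_{n+1}=0)\bigr],
\end{equation*}
so that
\begin{equation*}
E_x[a(S_1)]-a(x)=\sum_{n\ge 0}\bigl[P_x(S_n=0)-P_x(S_{n+1}=0)\bigr]=P_x(S_0=0)=\delta(x,0),
\end{equation*}
where the last equality uses that $P_x(S_N=0)\to 0$ as $N\to\infty$ by the local CLT. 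Applying this at $x=0$ and exploiting the reflection and rotation symmetries of $a$ (all four nearest neighbors of the origin carry the same value of $a$) immediately yields $a(\pm 1,0)=a(0,\pm 1)=1$, which is \eqref{property_a_2}. Nonnegativity \eqref{property_a_1} then follows because $a$ is harmonic off the origin, grows at infinity (by Step 3 below), and a bounded-below discrete-harmonic function cannot have a strict interior minimum below the boundary-at-infinity value.

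Finally, for the asymptotic \eqref{property_a_5} I would use the Fourier representation
\begin{equation*}
a(x)=\int_{[-\pi,\pi]^2}\frac{1-\cos(x\cdot\theta)}{1-\hat p(\theta)}\,\frac{d\theta}{(2\pi)^2},\qquad \hat p(\theta)=\tfrac12(\cos\theta_1+\cos\theta_2),
\end{equation*}
and analyze the integrand near the singularity $\theta=0$, where $1-\hat p(\theta)=\tfrac14|\theta|^2+O(|\theta|^4)$. Separating the quadratic part of the denominator extracts a $\frac{1}{2\pi}\log|x|$ contribution (by the classical Hardy--Littlewood type estimate for $\int(1-\cos(x\cdot\theta))/|\theta|^2\,d\theta$), the remainder assembles into the constant $C_0$, and the $O(|\theta|^4)$ correction to $\hat p$ together with integration by parts on $1-\cos(x\cdot\theta)$ yields the $O(|x|^{-2})$ error.

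The main obstacle is the asymptotic expansion: getting the sharp $|x|^{-2}$ error term (as opposed to a crude $o(1)$) requires a careful two-scale splitting of the Fourier integral and a nontrivial stationary-phase or integration-by-parts argument on the high-frequency part. Convergence of the series and the mean-value identity are comparatively routine once the local CLT is in hand; since this lemma is quoted verbatim from \cite{harmonic_measure_1987}, a full derivation would likely be replaced by a reference to Spitzer's book or Kesten's original argument.
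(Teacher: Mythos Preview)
Your sketch is a correct outline of the standard potential-kernel argument, but the paper does not prove this lemma at all: it is stated as ``Lemma 1 of \cite{harmonic_measure_1987}'' and used as a black box, with no argument given. You anticipated this yourself in your final sentence, and that is exactly what happens --- the paper simply quotes the result and moves on to apply it in Lemmas \ref{lemma: inverse_3} and \ref{lemma: inverse_4}.
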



Here for any positive $R$ we use the notation 
$$
\tau_R=\tau_{\partial^{out}B(0,R)}.
$$
We have

\begin{lemma}(Inverse of Lemma 3 of  \cite{harmonic_measure_1987})
\label{lemma: inverse_3}
Let $D\subset \{u: \ |u|\le r\}$ contain the origin. Then for all $R$ sufficiently larger than $r$ one has uniformly in $v\in \partial^{out}B(0,R)$ and in $D$
\beq
P_v\left(\tau_D<\tau_R\right)\le C [R \log (R)]^{-1}. 
\eeq
\end{lemma}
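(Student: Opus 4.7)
My plan is to run essentially the Kesten Lemma 3 argument with the inequalities reversed, using the potential kernel $a(\cdot)$ together with the quantitative asymptotics \eqref{property_a_5}, and tracking the error terms carefully to pick up both the $\log R$ and the extra $R$ in the denominator. First I would monotonize the event: since $D$ is contained in the closed disc $B(0,r)$, any path realizing $\{\tau_D < \tau_R\}$ must first enter $B(0,r)$, so
\beq
P_v(\tau_D < \tau_R) \le P_v(\tau_{B(0,r)} < \tau_R) =: p,
\eeq
and all dependence on $D$ is absorbed; it suffices to bound $p$.

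Next, set $T = \tau_{B(0,r)} \wedge \tau_R$. Because $|v|>R$ and the walk cannot reach the origin before entering $B(0,r)$, the identity \eqref{property_a_3} shows that $a(S_{n\wedge T})$ is a nonnegative martingale, so optional stopping gives
\beq
a(v) = \alpha\, p + \beta(1-p),
\eeq
where $\alpha := E_v[a(S_{\tau_{B(0,r)}}) \mid \tau_{B(0,r)} < \tau_R]$ and $\beta := E_v[a(S_{\tau_R}) \mid \tau_R < \tau_{B(0,r)}]$. Rearranging yields $p = (\beta - a(v))/(\beta - \alpha)$, so the task reduces to a small upper bound on the numerator and a large lower bound on the denominator, both via \eqref{property_a_5}. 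The denominator is the easy part: a direct application of \eqref{property_a_5} gives $\alpha \le \frac{1}{2\pi}\log r + C_0 + C_1$ (the value at $u=0$ being $0$ does not hurt) and $\beta \ge \frac{1}{2\pi}\log R + C_0 - o(1)$, so $\beta - \alpha \ge c\log R$ as soon as $R/r$ is large enough to swallow the additive constants. This already accounts for the $\log R$ factor.

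The delicate point, which I expect to be the main obstacle, is squeezing out the extra factor of $1/R$ in the numerator $\beta - a(v)$. The key lattice geometric fact is that every point of $\partial^{out}B(0,R)$ has a neighbor inside $B(0,R)$ and hence lies in the thin annulus $R < |\cdot| \le R+1$; combined with $|v|>R$ this forces
\beq
\beta - a(v) \le \frac{\log(R+1)-\log R}{2\pi} + O(R^{-2}) = O(1/R),
\eeq
since $a$ barely changes between two points both at distance $R + O(1)$ from the origin. Combining the two estimates gives $p \le C/(R \log R)$ uniformly in $v \in \partial^{out}B(0,R)$ and in $D$, as required. The only remaining bookkeeping is to make the phrase ``$R$ sufficiently larger than $r$'' explicit enough that the constants $C_0$ and $C_1$ in \eqref{property_a_5} are indeed dominated by $\log(R/r)/(2\pi)$; everything else is routine.
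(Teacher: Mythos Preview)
Your overall strategy coincides with the paper's: optional stopping for the potential-kernel martingale $a(S_\cdot)$, with the $O(1/R)$ numerator coming from the thinness of the annulus $\partial^{out}B(0,R)$ and the $\Omega(\log R)$ denominator from the asymptotics \eqref{property_a_5}. The monotonization $D\rightsquigarrow B(0,r)$ is harmless. However, there is a real gap at the optional stopping step. From $v\in\partial^{out}B(0,R)$ the walk can, with positive probability, step outward to a point of $\ZZ^2\setminus\bigl(B(0,R)\cup\partial^{out}B(0,R)\bigr)$ and then wander arbitrarily far before time $T$; consequently $a(S_{n\wedge T})$ is unbounded and in fact not uniformly integrable, and the identity $a(v)=\alpha p+\beta(1-p)$ you wrote is actually false---truncating at radius $R'$ and letting $R'\to\infty$ leaves a residual of exact order $1/R$ (this is the discrete analogue of $\log|B_t|$ being only a local martingale for planar Brownian motion). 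The final bound happens to survive because this defect is the same size as your numerator, but the argument as written does not establish it.

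The paper avoids this with one extra line. Since the event $\{\tau_D<\tau_R\}$ forces $S_1\in B(0,R)$ (any other first step either gives $\tau_R=1$ or places the walk strictly outside, from where it must cross $\partial^{out}B(0,R)$ before it can reach $D\subset B(0,R)$), one has $P_v(\tau_D<\tau_R)\le\sup_{w\sim v,\,w\in B(0,R)}P_w(\tau_D<\tau_R)$. Starting instead from such a $w$, the walk is confined to $B(0,R+1)$ until $\sigma=\tau_D\wedge\tau_R$, so $a(S_{n\wedge\sigma})$ is bounded and optional stopping is immediate. The remainder of the paper's computation is then identical to yours with $a(w)$ in place of $a(v)$; since $R-1\le|w|\le R$, the numerator estimate $O(1/R)$ goes through unchanged.
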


\begin{proof}
For any $v\in \partial^{out}B(0,R)$, there must be at least one point among its 4 neighbors within $B(0,R)$. And for each such point $w$, we use the same martingale 
$$
Y_n=a\left(S_{n\wedge \tau_0} \right)
$$
as the Lemma 3 of  \cite{harmonic_measure_1987}, with $S_0=w$. Then since $0\in D$, we have that the stopping time $\sigma=\tau_D\wedge \tau_R\le \tau_0$, and  
$$
\begin{aligned}
a(w)&=E_w[a(S_\sigma)\mathbb{1}_{\tau_D< \tau_R}]+E_w[a(S_\sigma)\mathbb{1}_{\tau_R< \tau_D}]\\
&=P_w(\tau_D< \tau_R) E_w[a(S_\sigma)|\tau_D< \tau_R]+P_w(\tau_R< \tau_D) E_w[a(S_\sigma)|\tau_R< \tau_D]\\
&=E_w[a(S_\sigma)|\tau_R< \tau_D]-P_w(\tau_D< \tau_R)\left(E_w[a(S_\sigma)|\tau_R< \tau_D]- E_w[a(S_\sigma)|\tau_D< \tau_R]\right).
\end{aligned}
$$
Thus we have
\beq
\label{inverse_3_1}
P_w(\tau_D< \tau_R)=\frac{E_w[a(S_\sigma)|\tau_R< \tau_D]-a(w)}{E_w[a(S_\sigma)|\tau_R< \tau_D]- E_w[a(S_\sigma)|\tau_D< \tau_R]}.
\eeq
Note that under $\{\tau_R< \tau_D\}$, $S_\sigma\in \partial^{out}B(0,R)$, which implies $R\le |S_\sigma|\le R+1$. We have by \eqref{property_a_5} for sufficiently large $R$
\beq
\label{inverse_3_2}
\begin{aligned}
E_w[a(S_\sigma)|\tau_R< \tau_D]&\le E\left[\frac{1}{2\pi}\log(|S_\sigma|)+C_0+\frac{C_1}{|S_\sigma|^2}\Big|\tau_R< \tau_D \right]\\
&\le \frac{1}{2\pi}\log(R+1)+C_0+\frac{C_1}{R^2}\\
&\le \frac{1}{2\pi}\log(R)+\frac{1}{2\pi R}+C_0+\frac{C_1}{R^2}.
\end{aligned}
\eeq
The last inequality is from the fact that $\log(1+x)\le x$. On the other hand, we can also have
\beq
\label{inverse_3_3}
\begin{aligned}
E_w[a(S_\sigma)|\tau_R< \tau_D]&\ge E\left[\frac{1}{2\pi}\log(|S_\sigma|)+C_0-\frac{C_1}{|S_\sigma|^2}\Big|\tau_R< \tau_D \right]\\
&\ge \frac{1}{2\pi}\log(R)+C_0-\frac{C_1}{R^2}. 
\end{aligned}
\eeq
Similarly, note that $R-1\le |w|\le R$, we have sufficiently large $R$
\beq
\label{inverse_3_4}
\begin{aligned}
a(w)&\ge \frac{1}{2\pi}\log(|a_w|)+C_0-\frac{C_1}{|a_w|^2}\\
&\ge \frac{1}{2\pi}\log(R-1)+C_0-\frac{2C_1}{R^2}\\
&\ge \frac{1}{2\pi}\log(R)-\frac{1}{\pi R}+C_0-\frac{2C_1}{R^2}.
\end{aligned}
\eeq
Now the last inequality is from the fact that $\log(1-R^{-1})=-R^{-1}+O(-R^{-2})>-2R^{-1}$ for sufficiently large $R$. Finally, we can use the same argument and have for sufficiently large $R$
\beq
\label{inverse_3_5}
\begin{aligned}
E_w[a(S_\sigma)|\tau_D< \tau_R]&= E_w[a(S_\sigma)\mathbb{1}_{S_\sigma\not=0}|\tau_D< \tau_R]\\
&\le \frac{1}{2\pi}\log(r)+C_0+C_1\\
&\le \frac{1}{4\pi}\log(R)+C_0-\frac{C_1}{R^2}.
\end{aligned}
\eeq
From \eqref{inverse_3_1} one immediately has that 
\beq
\label{inverse_3_6}
P_w(\tau_D< \tau_R)\le \frac{\overline E_w[a(S_\sigma)|\tau_R< \tau_D]-\underline a(w)}{\underline E_w[a(S_\sigma)|\tau_R< \tau_D]- \overline E_w[a(S_\sigma)|\tau_D< \tau_R]},
\eeq
where $\overline{(\cdot)}$ stands for an upper bound while $\underline{(\cdot)}$ for a lower bound. Then substitute \eqref{inverse_3_2}-\eqref{inverse_3_5} into \eqref{inverse_3_6}, we have
\beq
\label{inverse_3_7}
P_w(\tau_D< \tau_R)\le \frac{\frac{3}{2\pi R}+\frac{3C_1}{R^2}}{\frac{1}{4\pi}\log(R)}\le 7 [R \log R]^{-1}
\eeq
for sufficiently large $R$. Finally note that 
$$
P_v\left(\tau_D<\tau_R\right)\le \sup_{w\in B(0,R): \ |w-v|=1} P_w(\tau_D< \tau_R).
$$
Thus the proof of this lemma is complete. 
\end{proof}

Our next lemma gives an ``inverse" for Lemma 4 of of  \cite{harmonic_measure_1987}, under a slightly stronger condition. 

\begin{lemma}(Inverse of Lemma 4 of  \cite{harmonic_measure_1987})
\label{lemma: inverse_4}
There are constants $3<C_2<\infty$ and $c_2>0$ such that for all $r$ and $R$ sufficiently larger than $r$, any $D\subset \{u: \ |u|\le r\}$, and for any $z\in \partial^{out} B(0, C_2\cdot r)$, we have
\beq
P_z(\tau_R<\tau_D)\ge \frac{c_2} {\log(R)}.
\eeq
\end{lemma}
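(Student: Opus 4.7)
The plan is to reduce the bound to a $D$-free gambler's-ruin estimate and then apply optional stopping to the potential-kernel martingale, exactly parallel to the proof of Lemma \ref{lemma: inverse_3} but in the other direction. The key observation is that since $D\subset B(0,r):=\{u:|u|\le r\}$ and $|z|\ge C_2 r>r$, any nearest-neighbor path from $z$ to $D$ must first enter $B(0,r)$; hence $\tau_D\ge \tau_{B(0,r)}$ and therefore
$$P_z(\tau_R<\tau_D)\ \ge\ P_z\big(\tau_R<\tau_{B(0,r)}\big).$$
So it suffices to prove the $D$-free lower bound $P_z(\tau_R<\tau_{B(0,r)})\ge c_2/\log R$.

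To bound the right-hand side from below, I would apply optional stopping to the martingale $a(S_{n\wedge \tau_0})$ at the stopping time $\sigma = \tau_R\wedge\tau_{B(0,r)}$. Since $0\in B(0,r)$ we have $\sigma\le \tau_0$, so $Y_\sigma = a(S_\sigma)$ and, writing $p=P_z(\tau_R<\tau_{B(0,r)})$,
$$a(z)\ =\ p\,E_z\big[a(S_\sigma)\,\big|\,\tau_R<\tau_{B(0,r)}\big]\ +\ (1-p)\,E_z\big[a(S_\sigma)\,\big|\,\tau_{B(0,r)}\le\tau_R\big].$$
On $\{\tau_R<\tau_{B(0,r)}\}$ one has $R\le |S_\sigma|\le R+1$; on $\{\tau_{B(0,r)}\le\tau_R\}$, because $S_{\sigma-1}\notin B(0,r)$ and the walk jumps by $1$, one has $r-1<|S_\sigma|\le r$; and $|z|\in[C_2r,C_2r+1]$. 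Feeding these into the asymptotic \eqref{property_a_5} gives (just as in \eqref{inverse_3_2}--\eqref{inverse_3_5}) the estimates
$$a(z)\ \ge\ \tfrac{1}{2\pi}\log(C_2 r)+C_0-O(1/r^2),$$
$$E_z[a(S_\sigma)\mid \tau_R<\tau_{B(0,r)}]\ \le\ \tfrac{1}{2\pi}\log R+C_0+O(1/R),$$
$$E_z[a(S_\sigma)\mid \tau_{B(0,r)}\le\tau_R]\ \le\ \tfrac{1}{2\pi}\log r+C_0+O(1/r).$$

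Substituting the three estimates into the optional-stopping identity, the $C_0$ terms cancel and the $\log r$ contributions combine so that, after solving for $p$,
$$p\ \ge\ \frac{\log C_2-O(1/r)}{\log(R/r)+O(1/R)}\ \ge\ \frac{\log C_2}{2\log R}$$
for all $r$ sufficiently large and all $R>r$, since $\log(R/r)\le\log R$. Picking any $C_2>3$ makes $\log C_2$ a fixed positive constant, and setting $c_2=(\log C_2)/2$ yields the claim.

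The main obstacle is exactly the ``slightly stronger condition'' mentioned in the statement: the $O(1/r)$ error in the numerator must be strictly dominated by $\log C_2$, which forces $C_2$ to be bounded away from $1$ (any $C_2>3$ works comfortably) and $r$ to be at least some absolute threshold. Beyond this bookkeeping, the argument is a mirror of the optional-stopping computation used for Lemma \ref{lemma: inverse_3}; the reduction $\tau_D\ge\tau_{B(0,r)}$ is what allows us to eliminate any dependence of $p$ on the particular shape of $D\subset B(0,r)$.
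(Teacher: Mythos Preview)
Your proof is correct and in fact cleaner than the paper's. The key difference is your opening reduction: since $D\subset B(0,r)$ and $|z|>r$, you observe $\tau_D\ge\tau_{B(0,r)}$ and hence $P_z(\tau_R<\tau_D)\ge P_z(\tau_R<\tau_{B(0,r)})$, which strips $D$ out entirely before any martingale is invoked. You then run optional stopping on the single potential $a(S_{n\wedge\tau_0})$ between the two concentric spheres $\partial B(0,r)$ and $\partial^{out}B(0,R)$, a pure annulus estimate. The paper instead keeps $D$ in the picture and applies optional stopping to the summed potential $Z_n=\sum_{v\in D}a(S_{n\wedge\tau_D}-v)$; the $|D|$ factors then cancel in the ratio, but because $S_\sigma$ can land anywhere in $D$ (so $|S_\sigma-v|$ may be as small as $1$), the error term in the ``inner'' estimate is $C_1$ rather than $O(1/r^2)$. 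That is why the paper is forced to take $C_2=2e^{4\pi C_1+1}+1$ to absorb this constant, whereas in your argument $|S_\sigma|>r-1$ on the inner event, the error is $O(1/r^2)$, and any fixed $C_2>1$ (in particular $C_2>3$) works once $r$ exceeds an absolute threshold. One small bookkeeping slip: your denominator should read $\log(R/r)+O(1/r)$ rather than $+O(1/R)$, since the lower bound on $E_z[a(S_\sigma)\mid\tau_{B(0,r)}\le\tau_R]$ carries an $O(1/r)$ error; this does not affect the conclusion, as $\log(R/r)+O(1/r)\le 2\log R$ for $r$ large.
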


\begin{proof}
Again, we first consider the same martingale as in the original lemma. For each $z\in \partial^{out} B(0, C_2\cdot r)$ define
$$
Z_n=\sum_{v\in D}a(S_{n\wedge \tau_D}-v)
$$
with $S_0=z$. Recall that $\sigma=\tau_D\wedge \tau_R\le \tau_D$. Using the same argument as in Lemma 4 of  \cite{harmonic_measure_1987} and Lemma \ref{lemma: inverse_3} above, we have 
$$
\begin{aligned}
\sum_{v\in D}a(z-v)&=P_z(\tau_R<\tau_D) \left( \sum_{v\in D}E[a(S_{\sigma}-v)| \tau_R<\tau_D]\right)\\
&+[1-P_z(\tau_R<\tau_D)] \left( \sum_{v\in D}E[a(S_{\sigma}-v)| \tau_D<\tau_R]\right),
\end{aligned}
$$
which gives us
\beq
\label{inverse_4_1}
\begin{aligned}
P_z(\tau_R<\tau_D)&=\frac{\sum_{v\in D}a(z-v)-\sum_{v\in D}E[a(S_{\sigma}-v)| \tau_D<\tau_R]}{\sum_{v\in D}E[a(S_{\sigma}-v)| \tau_R<\tau_D]-\sum_{v\in D}E[a(S_{\sigma}-v)| \tau_R<\tau_D]}\\ 
\\
&\ge \frac{\sum_{v\in D}a(z-v)-\sum_{v\in D}E[a(S_{\sigma}-v)| \tau_D<\tau_R]}{\sum_{v\in D}E[a(S_{\sigma}-v)| \tau_R<\tau_D]}\\
\\
&\ge \frac{\underline\sum_{v\in D}a(z-v)-\overline\sum_{v\in D}E[a(S_{\sigma}-v)| \tau_D<\tau_R]}{\overline\sum_{v\in D}E[a(S_{\sigma}-v)| \tau_R<\tau_D]}\\
\end{aligned}
\eeq
where $\overline{(\cdot)}$ again stands for an upper bound while $\underline{(\cdot)}$ for a lower bound. Then for any $z\in \partial^{out} B(0, C_2\cdot r)$ and any $v\in D\subset B(0,r)$, $|z-v|\ge (C_2-1)r$, which implies 
$$
\begin{aligned}
a(z-v)&\ge \frac{1}{2\pi}\log(|z-v|)+C_0-\frac{C_1}{|z-v|^2}\\
&\ge \frac{1}{2\pi}\log(r)+\frac{\log(C_2-1)}{2\pi}+C_0-C_1.
\end{aligned}
$$
Taking summation over all $v\in D$, we have
\beq
\label{inverse_4_2}
\sum_{v\in D}a(z-v)\ge \frac{|D|}{2\pi}\log(r)+\left[\frac{\log(C_2-1)}{2\pi}+C_0-C_1 \right]\cdot|D|. 
\eeq
Then under $\{\tau_D<\tau_R\}$, $S_{\sigma}\in D$, which implies that $|S_{\sigma}-v|\le 2r$ for all $v\in D\subset B(0,r)$. Then we have 
$$
\begin{aligned}
E[a(S_{\sigma}-v)| \tau_D<\tau_R]&=E[a(S_{\sigma}-v)\mathbb{1}_{S_{\sigma}\not=v} | \tau_D<\tau_R]\\
&\le E\left[\left(\frac{1}{2\pi}\log(|S_{\sigma}-v|)+C_0+\frac{C_1}{|S_{\sigma}-v|^2}\right)\mathbb{1}_{S_{\sigma}\not=v}\Big | \tau_D<\tau_R \right]\\
&\le \frac{1}{2\pi}\log(r)+\left[\frac{\log(2)}{2\pi}+C_0+C_1\right]. 
\end{aligned}
$$
Again taking summation over all $v\in D$, we have
\beq
\label{inverse_4_3}
\sum_{v\in D}E[a(S_{\sigma}-v)| \tau_D<\tau_R]\le \frac{|D|}{2\pi}\log(r)+\left[\frac{\log(2)}{2\pi}+C_0+C_1\right]\cdot|D|.
\eeq
Finally, under $\{\tau_R<\tau_D\}$, $S_{\sigma}\in \partial^{out}B(0,R)$, so for any $v\in D$, $R-r\le |S_{\sigma}-v|\le R+r$. Thus 
$$
E[a(S_{\sigma}-v)| \tau_R<\tau_D]\le \frac{1}{2\pi}\log(R+r)+C_0+\frac{C_1}{(R-r)^2}\le \frac{1}{\pi}\log(R)
$$
for all $R$ sufficiently larger than $r$. Thus we have 
\beq
\label{inverse_4_4}
\sum_{v\in D}E[a(S_{\sigma}-v)| \tau_R<\tau_D]\le \frac{|D|}{\pi}\log(R).
\eeq
Now, we can substitute \eqref{inverse_4_2}-\eqref{inverse_4_4} into \eqref{inverse_4_1},
\beq
\begin{aligned}
P_z(\tau_R<\tau_D)&\ge \frac{\left[\frac{\log(C_2-1)}{2\pi}-\frac{\log(2)}{2\pi}-2C_1 \right]\cdot|D|}{\frac{|D|}{\pi}\log(R)}=\frac{\log(\frac{C_2-1}{2})-4\pi C_1}{2\log(R)},
\end{aligned}
\eeq
and let $C_2=2\exp(4\pi C_1+1)+1$, and $c_2=1/2$ to complete the proof. 

\end{proof}

With Lemma \ref{lemma: inverse_3} and Lemma \ref{lemma: inverse_4}, we are now able to show the asymptotic sharpness of (2.15) in \cite{harmonic_measure_1987} and have 
\begin{lemma}
\label{lemma: 2.15}
There is a constant $c_3>0$ such that for all $D\subset \{u: \ |u|\le r\}$ contains the origin, and any $y\in D$
$$
\mu_D(y)\ge c_3 P_y(\tau_{C_2\cdot r}<\tau_D).
$$
Here $\mu_D(\cdot)$ is the harmonic measure on $\ZZ^2$ associated with $D$, 
$$
\mu_D(y)=\lim_{|z|\to\infty} P_z(S_{\tau_D}=y). 
$$ 

\end{lemma}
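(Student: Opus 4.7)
My approach is to decompose $\mu_D(y)$ via the strong Markov property at the hitting time of $\partial^{out}B(0, C_2 r)$, then combine the decomposition with time reversal and Lemma \ref{lemma: inverse_3}. For $|z|$ sufficiently larger than $C_2 r$, the strong Markov property gives
\beq
P_z(S_{\tau_D} = y) = \sum_{w \in \partial^{out}B(0, C_2 r)} P_z(S_{\tau_{C_2 r}} = w)\, P_w(S_{\tau_D} = y),
\eeq
so letting $|z|\to\infty$ yields $\mu_D(y) = \sum_w \pi(w)\, P_w(S_{\tau_D}=y)$, where $\pi(w) := \lim_{|z|\to\infty} P_z(S_{\tau_{C_2 r}} = w)$ is a probability measure on $\partial^{out}B(0, C_2 r)$. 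Since $w \mapsto P_w(S_{\tau_D} = y)$ is a non-negative harmonic function on $\ZZ^2 \setminus D$ and every $w \in \partial^{out}B(0, C_2 r)$ sits at distance at least $(C_2-1)r$ from $D \subset B(0, r)$, a chain-of-balls Harnack inequality produces a universal $c_H > 0$ with $P_w(S_{\tau_D}=y) \ge c_H \bar P(y)$ for every such $w$, where $\bar P(y) := |\partial^{out}B(0, C_2 r)|^{-1} \sum_{w'} P_{w'}(S_{\tau_D}=y)$. Combining with $\sum_w \pi(w) = 1$ gives $\mu_D(y) \ge c_H \bar P(y)$.

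To lower bound $\bar P(y)$, I invoke time reversal: matching nearest-neighbor paths of length $k$ from $w$ to $y$ term by term shows $P_w(S_{\tau_D}=y) = E_y[\#\{k \in [1,\tau_D] : S_k = w\}]$ for each $w \notin D$, so summing over $w \in \partial^{out}B(0, C_2 r)$ converts the right-hand side into $E_y[V]$, where $V$ counts the visits of the walk started at $y$ to $\partial^{out}B(0, C_2 r)$ before its return to $D$. A second strong Markov step, at the first entrance time $\tau_{C_2 r}$ of this walk, gives
\beq
E_y[V] \ge P_y(\tau_{C_2 r} < \tau_D) \cdot \min_{w' \in \partial^{out}B(0, C_2 r)} E_{w'}[V].
\eeq
Lemma \ref{lemma: inverse_3}, applied with $R = C_2 r$, asserts that for any $w' \in \partial^{out}B(0, C_2 r)$ the probability of hitting $D$ before the next return to $\partial^{out}B(0, C_2 r)$ is at most $C/[C_2 r \log(C_2 r)]$. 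A geometric-series estimate on successive excursions then delivers $E_{w'}[V] \ge c\, r \log r \ge c\, r$.

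Putting the three pieces together and using $|\partial^{out}B(0, C_2 r)| = O(r)$, I obtain
\beq
\mu_D(y) \ge c_H \bar P(y) = \frac{c_H}{|\partial^{out}B(0, C_2 r)|}\, E_y[V] \ge \frac{c_H}{C''\, r} \cdot c\, r \cdot P_y(\tau_{C_2 r} < \tau_D) \ge c_3\, P_y(\tau_{C_2 r} < \tau_D),
\eeq
finishing the proof. The step I expect to be the main obstacle is the Harnack comparison in the first paragraph: the constant $c_H$ must be uniform in $D$ and in $y \in D$, which is precisely where having $C_2$ chosen large (as fixed in Lemma \ref{lemma: inverse_4}) pays off, since the ring $\partial^{out}B(0, C_2 r)$ then sits at distance $\asymp r$ from $D$ and only a bounded number of Harnack doublings are needed to chain any two of its points together. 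The existence of the limiting distribution $\pi$ on $\partial^{out}B(0, C_2 r)$ and the time-reversal identity for simple random walk are both standard and only need careful verification.
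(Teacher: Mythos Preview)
Your overall route is sound and genuinely different from the paper's. The paper introduces a large auxiliary scale $R$, applies Lemma~\ref{lemma: inverse_3} at that scale to get the factor $R\log R$, and then invokes Lemma~\ref{lemma: inverse_4} to pass from $\tau_R$ back to $\tau_{C_2 r}$; the two $\log R$ factors cancel. You instead work directly at scale $C_2 r$, replacing Lemma~\ref{lemma: inverse_4} by a Harnack chain on the annulus $\{r \le |u| \le C_2 r\}$ to make the entrance distribution $\pi$ comparable to the uniform measure. This is a legitimate trade: the Harnack step is standard (the ring sits at distance $\ge (C_2-1)r$ from $D$, so $O(1)$ overlapping balls suffice, giving a universal $c_H$), and it lets you bypass Lemma~\ref{lemma: inverse_4} entirely.

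There is, however, a genuine slip in your invocation of Lemma~\ref{lemma: inverse_3}. That lemma requires $R$ \emph{sufficiently larger} than $r$; inspecting its proof (in particular the step where $\tfrac{1}{2\pi}\log r + C_0 + C_1$ is bounded by $\tfrac{1}{4\pi}\log R + C_0 - C_1/R^2$) shows one needs roughly $\log R \ge 2\log r + \text{const}$, i.e.\ $R \gtrsim r^2$. With $R = C_2 r$ the denominator in the martingale calculation is not $\asymp \log R$ but only $\tfrac{1}{2\pi}\log C_2 - O(1)$, a constant. Redoing that computation at $R = C_2 r$ yields $P_{w'}(\tau_D < \tau_{C_2 r}) \le C/r$, \emph{without} the $\log r$, so your claimed $E_{w'}[V] \ge c\,r\log r$ is false. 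Fortunately you only use $E_{w'}[V] \ge c\,r$ downstream, and that weaker bound does follow from the corrected estimate; so the argument survives once you replace the appeal to Lemma~\ref{lemma: inverse_3} by the direct (and easy) variant at scale $C_2 r$.
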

\begin{proof}
Here we follow exactly the same decomposition according to first hitting position, as in \cite{harmonic_measure_1987}. There is a $c>0$ such that for all sufficiently large $R$, 
$$
\mu_D(y)\ge \frac{c}{R}\sum_{z\in \partial^{out} B(0, R)}P_z(S_{\tau_D}=y). 
$$
And for any $z\in \partial^{out} B(0, R)$, we again have 
\begin{align*}
P_z(S_{\tau_D}=y)&=\sum_{n=1}^\infty P_z(\tau_D=n, S_{n}=y)\\
&=\sum_{n=1}^\infty P_y(\tau_D>n, S_{n}=z)\\
&=E_y\big[\text{\# of visits to $z$ in }[0,\tau_D]\big]. 
\end{align*}
Thus we have
\beq
\label{inverse_5_0}
\mu_D(y)\ge \frac{c}{R} E_y\big[\text{\# of visits to $\partial^{out} B(0, R)$ in }[0,\tau_D]\big].
\eeq
Apply strong Markov property on the expectation, we have 
\beq
\label{inverse_5_1}
\begin{aligned}
E_y&\big[\text{\# of visits to $\partial^{out} B(0, R)$ in }[0,\tau_D]\big]\\
&=\sum_{w\in  \partial^{out} B(0, R)}P_y(\tau_R<\tau_D, S_{\tau_R}=w) E_w \big[\text{\# of visits to $\partial^{out} B(0, R)$ in }[0,\tau_D]\big]. 
\end{aligned}
\eeq
Then for each $w\in  \partial^{out} B(0, R)$, 
\beq
\label{inverse_5_2}
\begin{aligned}
E_w &\big[\text{\# of visits to $\partial^{out} B(0, R)$ in }[0,\tau_D]\big]\\
&=\sum_{k=0}^\infty P_w(\text{$S$ returns at least $k$ times to $\partial^{out} B(0, R)$ before $\tau_D$})\\
&\ge \sum_{k=0}^\infty \left(\inf_{v\in \partial^{out} B(0, R)} \left\{P_v(\tau_R<\tau_D)\right\} \right)^k\\
&=\frac{1}{1-\inf_{v\in \partial^{out} B(0, R)} \left\{P_v(\tau_R<\tau_D)\right\} }\\
&=\frac{1}{\sup_{v\in \partial^{out} B(0, R)} \left\{P_v(\tau_D<\tau_R)\right\} }.
\end{aligned}
\eeq
Note that in Lemma \ref{lemma: inverse_3} we proved that 
$$
\sup_{v\in \partial^{out} B(0, R)} \left\{P_v(\tau_D<\tau_R)\right\}\le C [R \log (R)]^{-1}.
$$
Plug it to \eqref{inverse_5_2}, we have for all $w\in  \partial^{out} B(0, R)$, 
\beq
\label{inverse_5_3}
\begin{aligned}
E_w &\big[\text{\# of visits to $\partial^{out} B(0, R)$ in }[0,\tau_D]\big]\ge \frac{1}{C} R\log(R). 
\end{aligned}
\eeq
Then combining \eqref{inverse_5_0}, \eqref{inverse_5_1} and \eqref{inverse_5_3}, 
\beq
\label{inverse_5_4}
\begin{aligned}
\mu_D(y)&\ge \frac{c}{R}\cdot \frac{1}{C} R\log(R) \sum_{w\in  \partial^{out} B(0, R)}P_y(\tau_R<\tau_D, S_{\tau_R}=w)\\
&= \frac{c}{C}\log(R) P_y(\tau_R<\tau_D). 
\end{aligned}
\eeq
Then for $P_y(\tau_R<\tau_D)$, note that for sufficiently large $R>C_2\cdot r$, if a random walk wants to exit $B(0,R)$, it has to exit $B(0,C_2\cdot r)$ first. Thus, again by strong Markov property, we have
$$
P_y(\tau_R<\tau_D)=\sum_{z\in \partial^{out} B(0, C_2\cdot r)} P_y(\tau_{C_2\cdot r}<\tau_D, S_{\tau_{C_2\cdot r}}=z)P_z(\tau_R<\tau_D). 
$$
Since in Lemma \ref{lemma: inverse_4}, we prove that for any $z\in \partial^{out} B(0, C_2\cdot r)$, 
$$
P_z(\tau_R<\tau_D)\ge \frac{c_2} {\log(R)},
$$
we have
\beq
\begin{aligned}
\label{inverse_5_5}
P_y(\tau_R<\tau_D)&\ge\frac{c_2} {\log(R)} \sum_{z\in \partial^{out} B(0, C_2\cdot r)} P_y(\tau_{C_2\cdot r}<\tau_D, S_{\tau_{C_2\cdot r}}=z)\\
&=\frac{c_2} {\log(R)}P_y(\tau_{C_2\cdot r}<\tau_D).
\end{aligned}
\eeq
Combining \eqref{inverse_5_4} and \eqref{inverse_5_5}, and let $c_3=c\cdot c_2/C$, the proof of Lemma \ref{lemma: 2.15} is complete. 
\end{proof}

\subsection{Proof of Theorem \ref{theorem: uniform_path}}

Now we have the tools we need to finish the proof of Theorem \ref{theorem: uniform_path}. Recall that $B_n=L_0\cup \mathcal{Q}_n$, and that by \eqref{measure to expectation}, strong Markov property, and \eqref{1_dimensional_mean}
\beq
\label{Theorem_1_1}
\begin{aligned}
&H_{B_n,N}(y_n)\\
&=E_{y_n}\Big[\text{number of visits to $L_N$ in time interval } [0,\tau_{B_n}]\Big]\\
&=\sum_{w\in L_N}P_{y_n}(\tau_{L_N}< \tau_{B_n}, S_{\tau_{L_N}}=w) E_w\Big[\text{number of visits to $L_N$ in time interval } [0,\tau_{B_n}]\Big]\\
&\le\sum_{w\in L_N}P_{y_n}(\tau_{L_N}< \tau_{B_n}, S_{\tau_{L_N}}=w) E_w\Big[\text{number of visits to $L_N$ in time interval } [0,\tau_{L_0}]\Big]\\
&=4N\cdot P_{y_n}(\tau_{L_N}< \tau_{B_n}). 
\end{aligned}
\eeq
So in order to show \eqref{truncated path} and thus Theorem \ref{theorem: uniform_path}, it is sufficient to prove that 
\beq
\label{Theorem_1_2}
P_{y_n}(\tau_{L_N}< \tau_{B_n})\le \frac{C n^{1/2}}{N}. 
\eeq
To show \eqref{Theorem_1_2}, define $r_n=2n$, $\mathcal{S}_n=\partial^{out}B(y_n,C_2\cdot r_n)\cap \{(x,y)\in \ZZ^2, y\ge 1\}$. Note that if a simple random walk starting at $y_n$ wants to reach $L_N$ before returning to $B_n$, it has to visit some point in $\mathcal{S}_n$ first. Thus once again by strong Markov property,
\beq
\label{Theorem_1_3}
P_{y_n}(\tau_{L_N}< \tau_{B_n})=\sum_{z\in \mathcal{S}_n}P_{y_n}(\tau_{\mathcal{S}_n}< \tau_{B_n}, S_{\tau_{\mathcal{S}_n}}=z)P_z(\bar\tau_{L_N}< \bar\tau_{B_n}). 
\eeq
Note that for each $z\in \mathcal{S}_n$, by \eqref{basic 2} and the fact that $L_0\subset B_n$, 
$$
P_z(\bar\tau_{L_N}< \bar\tau_{B_n})\le P_z(\bar\tau_{L_N}<\bar\tau_{L_0})\le \frac{(2C_2+1)n}{N}. 
$$
Plugging this uniform upper bound into \eqref{Theorem_1_3}, we now have
$$
P_{y_n}(\tau_{L_N}< \tau_{B_n})\le P_{y_n}(\tau_{\mathcal{S}_n}< \tau_{B_n}) \cdot  \frac{(2C_2+1)n}{N}. 
$$
Thus for Theorem \ref{theorem: uniform_path} it is sufficient to show that
\beq
\label{Theorem_1_4}
P_{y_n}(\tau_{\mathcal{S}_n}< \tau_{B_n})\le C n^{-1/2}.
\eeq
Noting that $\mathcal{S}_n\subset \partial^{out}B(y_n,C_2\cdot r_n)$, and that $\mathcal{Q}_n\subset B_n$, then by \eqref{basic 1} and \eqref{basic 2},
$$
P_{y_n}(\tau_{\mathcal{S}_n}< \tau_{B_n})\le P_{y_n}(\tau_{\partial^{out}B(y_n,C_2\cdot r_n)}< \tau_{\mathcal{Q}_n}).
$$
Since simple random walk is translation invariant, 
$$
P_{y_n}(\tau_{\partial^{out}B(y_n,C_2\cdot r_n)}< \tau_{\mathcal{Q}_n})=P_0(\tau_{C_2\cdot r_n}<\tau_{D_n}),
$$
where $D_n=\mathcal{Q}_n-y_n$, which is a connected subset of $B(0,r_n)$ containing 0. Then apply Lemma \ref{lemma: 2.15} on $y=0\in D_n\subset B(0,r_n)$, we have
$$
P_0(\tau_{C_2\cdot r_n}<\tau_{D_n})\le \frac{1}{c_3} \mu_{D_n}(0).
$$
Finally by Theorem 1 (the only theorem) of \cite{harmonic_measure_1987}, and the fact that $0\in D_n$, with $D_n$ connected and $r(D_n)\in [n,2n]$. 
$$
\mu_{D_n}(0)\le C n^{-1/2},
$$
which finishes the proof of Theorem \ref{theorem: uniform_path}. \qed

\section{Subset maximizing the stationary harmonic measure}
\label{section: maximum}

In this section we prove Theorem \ref{theorem: maximum_path}. Then together with the uniform upper bound we had in Theorem \ref{theorem: uniform_path}, one can see that $U_n=V_n\cup L_0$ is the subset maximizing harmonic measure up to multiplying a constant. 

Before we start with the details, an outline of the proof of Theorem \ref{theorem: maximum_path} is presented. See also Figure \ref{fig:outfig}. The detailed proof will piece together everything we need in the list below, although the order that each lemma is proved may not be precisely consistent with the outline. 
\begin{enumerate}[(i)]
\item We have found that $H_{U_n,N}(y_n)$ equals to the expected number of visits to $L_N$ before a simple random walk $S$ starting from $y_n$ returns to $U_n$. If the random walk reaches $L_N$ first before returning to $U_n$, the expected  number of (re-)visits is $4N+o(N)$. 
\item For $S$ to reach $L_N$ first before returning to $U_n$, it has to reach $L_{2n}$ first. Once it reaches $L_{2n}$, the probability of success from there is at least of oder $n/N$. 
\item If $S$ reached the upper outer boundary of the $L_1$ ball $B_1(y_n,n/3)=\{|x|+|y-n|\le n/3\}$ before returning to $V_n$, by the invariance principle there is a positive probability for it to continue to $L_{2n}$ before returning to $U_n$.
\item The probability that $S_n$ exits $B_1(y_n,n/3)$ before returning to $V_n$ is at least $O(n^{-1/2})$.
\item Given $S_n$ exits $B_1(y_n,n/3)$ before returning to $V_n$, it is more likely to exit from the upper half than the lower half. 
\end{enumerate}

Without loss of generality, we only need to prove this theorem for $n$ sufficiently large and $N$ sufficiently larger than $n$. 

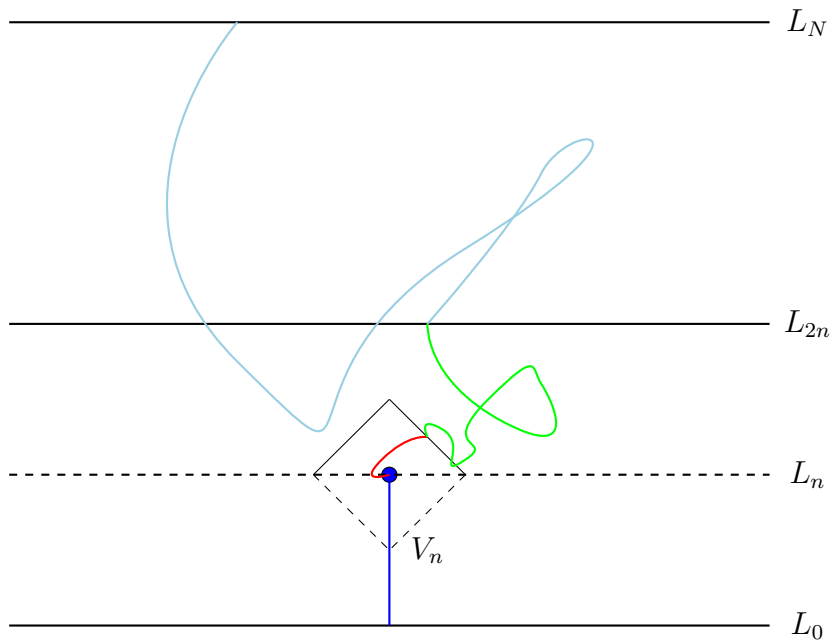
\begin{figure}[h!]
\centering
\begin{tikzpicture}[scale=1.0]
\tikzstyle{redcirc}=[circle,
draw=black,fill=myred,thin,inner sep=0pt,minimum size=2mm]
\tikzstyle{bluecirc}=[circle,
draw=black,fill=blue,thin,inner sep=0pt,minimum size=2mm]

\node (v1) at (0,2) [bluecirc] {};
\node (v2) at (5.5,0) {$L_0$};
\node (v3) at (5.5,2) {$L_n$};
\node (v4) at (5.5,4) {$L_{2n}$};
\node (v5) at (5.5,8) {$L_N$};
\node (v6) at (0.5,1) {$V_n$};

\draw [thick] (-5,0) to (5,0);
\draw [thick,dashed] (-5,2) to (5,2);
\draw [thick] (-5,4) to (5,4);
\draw [thick] (-5,8) to (5,8);
\draw [thick,blue] (0,0) to (0,2);
\draw (-1,2) to (0,3);
\draw (1,2) to (0,3);
\draw [dashed](-1,2) to (0,1);
\draw [dashed] (1,2) to (0,1);

\draw [red,thick] plot [smooth, tension=2.5] coordinates { (0,2) (-0.1,2.2) (0.5,2.5)};
\draw [green,thick] plot [smooth, tension=2.5] coordinates {(0.5,2.5) (0.7,2.6)(1,2.2) (1.3,3) (2,3.2) (1.5,2.7) (0.5,4) };
\draw [myblue,thick] plot [smooth, tension=2.5] coordinates {(0.5,4)(2,6)(1,5)(-2,3.5)(-2,8)};

\end{tikzpicture}
\caption{Outline for the lower bound}
\label{fig:outfig}
\end{figure} 

\subsection{Lower bound on escaping probability}

To show that $S_n$ exits $B_1(y_n,n/3)$ before returning to $V_n$ with probability at least $O(n^{-1/2})$, we need to prove one more asymptotic sharpness result which is basically an inverse of Lemma 6 in \cite{harmonic_measure_1987} with $y=0$. Here we first introduce the definition of the infinite range 1-dimensional random walk in their problem and quote its properties: 

Let $S_n, \ n\ge 0$ be a 2-dimensional simple random walk starting from the $y-$ (or equivalently $x-$) axis. Define the stopping times $\sigma_0=0$, 
$$
\sigma_{k+1}=\inf\{n>\sigma_k, \ S_n\in y\text{-axis} \}, 
$$ 
and
$$
T_k=y\text{-coornidate of } S_{\sigma_k},  \ \ Y_{k+1}=T_{k+1}-T_k.
$$
Note that now $T_k$ is a 1-dimensional random walk although each of its steps has no well defined expectation. Moreover, define $\rho$ to be a stopping time for $T$
$$
\rho=\inf\{k\ge 1, T_k\le 0\}.
$$
We can also define the Green function for the random walk $T$ stopped at $\rho$ as
\beq
\label{Green function T 1d}
G(j,l)=E_j[\text{number of visits by $T$ to $l$ before $\rho$}], \ \ j,l>0. 
\eeq
The following properties of $G(\cdot,\cdot)$ have been proved 
\begin{lemma}(Lemma 5 in \cite{harmonic_measure_1987})
\label{lemma_5}
For $j,l>0$,
\beq
G(j,l)=\sum_{n=1}^{j\wedge l} v(j-n)v(l-n)
\eeq
for some numbers $v(\cdot)$ satisfying 
\begin{align}
& v(n)\ge 0,\\
& V(n)=\sum_{i=0}^n v(k)\sim C \sqrt{n},\\
& V(T_{n\wedge \rho})\text{ is a nonnegative martingale under }P_j, \ j>0.
\end{align}
\end{lemma}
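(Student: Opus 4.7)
The plan is to establish all three parts via the classical Wiener--Hopf factorization of the Green function of a symmetric one-dimensional random walk killed on exit from the positive half-line, combined with Sparre--Andersen theory applied to the heavy-tailed walk $T$.

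First I would verify the structural ingredients. The reflection symmetry $(x,y) \mapsto (x,-y)$ of two-dimensional simple random walk forces the step distribution of $T$ to be symmetric, and a short computation of the law of the vertical displacement between consecutive visits of $S$ to the $y$-axis shows $P(|Y_1| > n) \sim c/n$. In particular $T$ lies in the domain of normal attraction of a symmetric Cauchy law, is recurrent, and has no finite first moment, so neither the ordinary elementary renewal theorem nor the usual Wald identity applies.

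Next, let $v(k)$, $k \ge 0$, denote the renewal mass at level $k$ of the strict ascending ladder height process of $T$, normalized so that $v(0)=1$. By the symmetry of the step law the ascending and descending ladder height distributions coincide, and the standard Wiener--Hopf factorization of the Green function of the walk killed on exit from $(0,\infty)$ then yields, after an index shift,
\beq
G(j,l) = \sum_{n=1}^{j\wedge l} v(j-n)\,v(l-n),
\eeq
as claimed; nonnegativity of $v$ is automatic from its probabilistic meaning. The harmonicity relation
$$
E_j\bigl[V(T_1)\,\mathbb{1}_{\rho>1}\bigr] = V(j),\qquad j>0,
$$
is a classical consequence of the Baxter--Pollaczek--Spitzer identity (equivalently, $V$ is, up to a positive scalar, the unique positive harmonic function for $T$ killed on exit from $(0,\infty)$). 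Iterating via the strong Markov property immediately gives that $V(T_{n\wedge\rho})$ is a nonnegative martingale under $P_j$ for every $j>0$.

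Finally, for the asymptotic $V(n)\sim C\sqrt{n}$ I would invoke Sparre--Andersen's theorem, which for a symmetric walk of this type gives $P_0(\rho>n)\sim c\, n^{-1/2}$. Through Spitzer's identity this translates into a $(1-s)^{-1/2}$ singularity at $s=1$ for the generating function $\sum_{k\ge 0} v(k)\,s^k$, and a Karamata Tauberian theorem then yields $V(n)\sim C\sqrt{n}$. The main obstacle is precisely this last step: the factorization and martingale property reduce to well-known algebraic identities for ladder variables, but the $\sqrt{n}$ growth rate of $V$ rests on the Sparre--Andersen $n^{-1/2}$ tail for $\rho$ and a Tauberian inversion, a combination made more delicate by the fact that the steps of $T$ have infinite first moment and thus force genuine heavy-tailed renewal theory rather than the elementary theorem.
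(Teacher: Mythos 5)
This lemma is quoted verbatim from Kesten's 1987 paper (it is explicitly labeled ``Lemma 5 in \cite{harmonic_measure_1987}''), and the present paper supplies no proof of it, so there is no in-paper argument to compare against. Your overall strategy --- identify $v$ with the renewal mass function of the strict ascending ladder heights of $T$, derive the factorization of $G$ and the harmonicity of $V$ from the standard Wiener--Hopf machinery for a symmetric walk killed on leaving $(0,\infty)$, and then obtain the growth of $V$ from fluctuation theory --- is the right framework and is essentially the one Kesten uses.

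There is, however, a genuine gap in your last step, the one you yourself flag as delicate. Sparre--Andersen's theorem gives $P_0(\rho>n)\sim c\,n^{-1/2}$ for the ladder \emph{epoch} of \emph{any} symmetric recurrent walk; it carries no information about the ladder \emph{heights}, and hence none about $V$. Spitzer's identity specialized at $\lambda=0$ only reproduces the epoch asymptotics, so the claimed route ``Sparre--Andersen $\Rightarrow$ $(1-s)^{-1/2}$ singularity of $\sum_k v(k)s^k$'' does not go through. Nearest-neighbour simple random walk on $\ZZ$ is a counterexample: there $P_0(\rho>n)\sim c\,n^{-1/2}$ as well, yet the strict ascending ladder height is identically $1$, so $v\equiv 1$ and $V(n)\sim n$, not $\sqrt{n}$. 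To get $V(n)\sim C\sqrt n$ one must use the specific Cauchy-type step tail $P(|Y_1|>n)\sim 1/n$ established in Kesten's (2.27): by Rogozin's theorem (or the Doney--Greenwood fluctuation estimates), a symmetric walk in the normal domain of attraction of the $\alpha$-stable law with $\alpha=1$ has strict ascending ladder heights in the domain of attraction of the positive $\alpha\rho = 1/2$-stable subordinator, i.e. $P(H>n)\sim c\,n^{-1/2}$; only then does the Dynkin--Lamperti/Karamata renewal theorem give $V(n)\sim C\sqrt{n}$. In short, the ingredient you need is the ladder-\emph{height} tail driven by the Cauchy step tail, not the universal ladder-\emph{epoch} tail from Sparre--Andersen.
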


Moreover, in Equation (2.27) of  \cite{harmonic_measure_1987} it has been proved that 
\beq
\label{inverse_6_0}
P(Y_1<-n)\sim \frac{1}{2n},
\eeq
and by symmetry
\beq
\label{inverse_6_0.5}
P(Y_1>n)\sim \frac{1}{2n}.
\eeq
Now we have all the tools needed to get the following lemma:
\begin{lemma}(Inverse of Lemma 6 in \cite{harmonic_measure_1987})
\label{lemma: 6}
There is a constant $c>0$ such that for all $r\ge 1$, one has
\beq
\label{inverse_6_1}
P_0(T_\rho<-r)\ge c r^{-1/2}. 
\eeq
\end{lemma}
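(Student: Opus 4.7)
The plan is to combine a first-step decomposition with the Green-function representation from Lemma \ref{lemma_5}. Since $T_0 = 0$ and $\rho \ge 1$, the Markov property at time $1$ yields
$$P_0(T_\rho < -r) \ge \sum_{j \ge 1} P(Y_1 = j)\, P_j(T_\rho < -r),$$
so it suffices to lower-bound $P_j(T_\rho < -r)$ for each $j \ge 1$ strongly enough that the weighted sum produces a positive constant multiple of $r^{-1/2}$. For fixed $j \ge 1$, I would decompose $\{T_\rho < -r\}$ according to the last visited position $l = T_{\rho - 1} > 0$: since $Y_\rho$ is independent of the past and $T_\rho = l + Y_\rho$, a standard Green-function identity gives
$$P_j(T_\rho < -r) = \sum_{l \ge 1} G(j,l)\, P(Y_1 < -r - l).$$

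Substituting the product formula $G(j,l) = \sum_{n=1}^{j \wedge l} v(j-n) v(l-n)$ from Lemma \ref{lemma_5}, swapping the order of summation, and setting $m = l - n$ transforms this into
$$P_j(T_\rho < -r) = \sum_{n=1}^{j} v(j-n) \sum_{m \ge 0} v(m)\, P\bigl(Y_1 < -(r + n + m)\bigr).$$
For $1 \le j \le r$, $1 \le n \le j$ and $0 \le m \le r$, one has $r + n + m \le 3r$, so by the tail estimate \eqref{inverse_6_0} we get $P(Y_1 < -(r+n+m)) \ge c_0/r$ for $r$ sufficiently large. Restricting the inner sum to $m \le r$ and using $V(r) \sim C\sqrt r$ yields the inner bound $c_0 V(r)/r \ge c_1/\sqrt r$. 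Summing over $n$ gives $\sum_{n=1}^{j} v(j-n) = V(j-1)$, hence
$$P_j(T_\rho < -r) \ge \frac{c_1}{\sqrt r}\, V(j-1) \qquad \text{for all } 1 \le j \le r.$$

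Plugging back into the first-step decomposition produces
$$P_0(T_\rho < -r) \ge \frac{c_1}{\sqrt r} \sum_{j=1}^{r} P(Y_1 = j)\, V(j-1).$$
Because $V$ is non-negative, non-decreasing, and tends to infinity, we may choose a fixed $j_0 \ge 2$ with $V(j_0 - 1) > 0$ and $P(Y_1 = j_0) > 0$; the summand at $j = j_0$ alone then bounds the sum from below by a positive constant, uniformly in $r \ge j_0$. This yields $P_0(T_\rho < -r) \ge c r^{-1/2}$ for large $r$, and the finitely many small values of $r$ are absorbed by adjusting the constant. The main obstacle is to make the intermediate estimates honest enough to track the correct power of $r$: one must apply the tail \eqref{inverse_6_0} uniformly for arguments of order $r$, and one must invoke the sharp growth $V(r) \ge (C/2)\sqrt r$ rather than a crude bound, since it is precisely the $\sqrt r$ in $V(r)$ that converts the $1/r$ from \eqref{inverse_6_0} into the desired $1/\sqrt r$. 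A minor subtlety is that the starting point $T_0 = 0$ lies on the non-strict boundary defining $\rho$, but this is handled purely by the first-step decomposition, which reduces everything to starting positions $j \ge 1$ where Lemma \ref{lemma_5} applies.
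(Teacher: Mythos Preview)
Your proof is correct and follows essentially the same route as the paper: first-step decomposition, the Green-function identity $P_j(T_\rho<-r)=\sum_l G(j,l)P(Y_1<-r-l)$, the product formula for $G$, and then extraction of a single term in $j$. The one difference is purely technical: where the paper bounds the inner sum $\sum_{m\ge 0} v(m)/(r+n+m)$ via summation by parts to obtain $c(r+n)^{-1/2}$, you truncate at $m\le r$ and invoke $V(r)\sim C\sqrt r$ directly, which is a slight simplification; note also that you may take $j_0=1$ since $v(0)\ge 1$ (from $G(1,1)=v(0)^2\ge 1$), exactly as the paper does.
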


\begin{proof}
Condition on the location of $T_1$, we have
\beq
\label{inverse_6_2}
\begin{aligned}
P_0(T_\rho<-r)&=P_0(Y_1<-r)+\sum_{j=1}^\infty P(Y_1=j) P_j(T_\rho<-r)\\
&\ge \sum_{j=1}^\infty P(Y_1=j) P_j(T_\rho<-r). 
\end{aligned}
\eeq
For each $j>0$,
\begin{align*}
P_j(T_\rho<-r)=\sum_{l=1}^\infty\sum_{n=1}^\infty P_j(\rho=n, T_{n-1}=l, T_n<-r), 
\end{align*}
while for each $n,j,l\ge 0$, 
$$
\begin{aligned}
P_j(\rho=n, T_{n-1}=l, T_n<-r)&=P_j(\rho>n-1, T_{n-1}=l,T_n<-r)\\
&=P_j(\rho>n-1, T_{n-1}=l)P_l(T_1<-r)\\
&=P_j(\rho>n-1, T_{n-1}=l)P(Y_1<-r-l).
\end{aligned}
$$
Taking the summation we have 
\beq
\label{inverse_6_3}
P_j(T_\rho<-r)=\sum_{l=1}^\infty G(j,l)P(Y_1<-r-l)
\eeq
and thus by \eqref{inverse_6_1}-\eqref{inverse_6_3}
\beq
\label{inverse_6_4}
\begin{aligned}
P_0(T_\rho<-r)&\ge \sum_{j=1}^\infty P(Y_1=j) \sum_{l=1}^\infty G(j,l)P(Y_1<-r-l)\\
&\ge c\sum_{j=1}^\infty P(Y_1=j) \sum_{l=1}^\infty G(j,l)\frac{1}{r+l}. 
\end{aligned}
\eeq
Then by Lemma \ref{lemma_5} we have 
\beq
\label{inverse_6_5}
\begin{aligned}
P_0(T_\rho<-r)&\ge c\sum_{j=1}^\infty P(Y_1=j) \sum_{l=1}^\infty G(j,l)\frac{1}{r+l}\\
&\ge  c\sum_{j=1}^\infty P(Y_1=j) \sum_{l=1}^\infty \frac{1}{r+l}\sum_{n=1}^{j\wedge l} v(j-n)v(l-n)\\
&= c\sum_{j=1}^\infty P(Y_1=j) \sum_{n=1}^j v(j-n)\sum_{l=n}^\infty v(l-n) \frac{1}{r+l}.
\end{aligned}
\eeq
Noting that for each $l\ge n$, 
$$
 \frac{1}{r+l}=\frac{1}{(r+l)(r+l+1)}+\frac{1}{(r+l+1)(r+l+2)}+\cdots,
$$
with summation by parts and Lemma \ref{lemma_5} we have for each $n$
\beq
\label{inverse_6_6}
\begin{aligned}
\sum_{l=n}^\infty v(l-n) \frac{1}{r+l}&=\sum_{m=0}^\infty \frac{V(m)}{(r+n+m)(r+n+m+1)}\\
&\ge c\sum_{m=0}^\infty \frac{\sqrt{m}}{(r+n+m)(r+n+m+1)}\\
&= c\sum_{m=1}^\infty \frac{\sqrt{m}-\sqrt{m-1}}{r+n+m}\\
&\ge \frac{1}{2} \sum_{m=1}^\infty \frac{1}{(r+n+m)^{3/2}}\\
&\ge \frac{1}{2} \int_{r+n+1}^\infty \frac{1}{x^{3/2}}\ge \frac{1}{2}(r+n)^{-1/2}.
\end{aligned}
\eeq
Then noting that for $l=j=1$,
$$
G(j,l)=\sum_{n=1}^{j\wedge l} v(j-n)v(l-n)=v(0)^2,
$$
and that by definition $G(1,1)\ge1>0$, we have $v(0)\ge1$. Moreover, note that $P(Y_1=1)\ge P_0(S_1=(0,1))=1/4$. Since all the terms in \eqref{inverse_6_5} are nonnegative, let $j=n=1$, we have 
\beq
\label{inverse_6_7}
\begin{aligned}
P_0(T_\rho<-r)&\ge cP(Y_1=1)v(0)\sum_{l=1}^\infty v(l-1) \frac{1}{r+l}\\
&\ge \frac{c}{8}(r+1)^{-1/2}\ge \frac{c}{16}r^{-1/2}. 
\end{aligned}
\eeq
Thus the proof of Lemma \ref{lemma: 6} is complete. 

\end{proof}

With Lemma \ref{lemma: 6} the proof for the desired lower bound of escaping probability is straightforward. Recall that we have a 2-dimensional simple random walk starting at $y_n=(0,n)$. Define $V'_n=\{(0,y), \  n-[n/3]\le y\le n\}$ and
$$
\mathcal{S}_{1,n}=\partial B_1(y_n,[n/3]). 
$$
Here note that for $L_1$ ball $B_1(y_n,[n/3])$ we do not need to specify if the boundary is in or out. Then for $C'_n=\{(0,y), \  y<n-[n/3]\}$, note that for a 2-dimensional simple random walk starting at $y_n=(0,n)$ we always have
$$
\tau_{\mathcal{S}_{1,n}}<\tau_{C'_n}.
$$
Thus for the escaping probability we want to bound from below, we have
$$
P_{y_n}\left(\tau_{\mathcal{S}_{1,n}}<\tau_{U_n}\right)=P_{y_n}\left(\tau_{\mathcal{S}_{1,n}}<\tau_{V'_n}\right)\ge P_{y_n}\left(\tau_{C'_n}<\tau_{V'_n}\right).
$$
By the translation invariance of $S$, 
$$
P_{y_n}\left(\tau_{C'_n}<\tau_{V'_n}\right)=P_{0}\left(\tau_{C'_n-y_n}<\tau_{V'_n-y_n}\right).
$$
Note that $C'_n-y_n=\{(0,y), \  y<-[n/3]\}$ and that $V'_n-y_n=\{(0,y), \  -[n/3]\le y\le 0\}$. Let $r=[n/3]$ one has
$$ 
P_{0}\left(\tau_{C'_n-y_n}<\tau_{V'_n-y_n}\right)= P_0(\rho<-r)\ge c\sqrt{3} n^{-1/2}. 
$$
Thus we have 
\beq
\label{escaping probability}
P_{y_n}\left(\tau_{\mathcal{S}_{1,n}}<\tau_{U_n}\right)=P_{y_n}\left(\tau_{\mathcal{S}_{1,n}}<\tau_{V'_n}\right) \ge c\sqrt{3} n^{-1/2}. 
\eeq

And similarly, if we look back at the harmonic measure on $\ZZ^2$ and let $r'=(C_2+1)r$. Then since 
$$
\tau_{C_2\cdot r}<\tau_{\{(-\infty, r')\times 0\}}
$$
Lemma \ref{lemma: 6} gives us
$$
P_0(\tau_{C_2\cdot r}<\tau_C)\ge P_0(T_\rho<-r')\ge c r^{-1/2}. 
$$
Then by Lemma \ref{lemma: 2.15},
$$
\mu_C(0)\ge c_3 P_0(\tau_{C_2\cdot r}<\tau_C)\ge cr^{-1/2}. 
$$
Thus we have also proved 
\begin{corollary}
\label{corllary_1987}
Among all connected subset $B\subset \ZZ^2$ containing the origin, with $r(B)=r$ and all $y\in B$, $\mu_B(y)$ is maximized (up to multiplying a constant) by when $B=[-r,r]\times 0$ and $y=(r,0)$.
\end{corollary}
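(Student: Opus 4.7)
The claim asserts that for every connected $B \subset \ZZ^2$ containing the origin with $r(B) = r$ and every $y \in B$, one has $\mu_B(y) \le C\, \mu_{B_\star}(y_\star)$ where $B_\star = [-r, r] \times \{0\}$ and $y_\star = (r, 0)$. My plan is to sandwich both sides between matching multiples of $r^{-1/2}$: invoke Theorem 1 of \cite{harmonic_measure_1987} for the universal upper bound $\mu_B(y) \le C r^{-1/2}$, and combine Lemma \ref{lemma: 2.15} with a rotated version of Lemma \ref{lemma: 6} to obtain the matching lower bound $\mu_{B_\star}(y_\star) \ge c\, r^{-1/2}$.

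By translation invariance of the simple random walk, $\mu_{B_\star}(y_\star) = \mu_{B'}(0)$, where $B' := B_\star - y_\star = [-2r, 0] \times \{0\}$ is a horizontal segment of length $2r$ sitting inside $B(0, 2r)$ and containing the origin. Applying Lemma \ref{lemma: 2.15} to $D = B'$ at radius $2r$ yields
$$\mu_{B'}(0) \ge c_3 \, P_0\bigl(\tau_{C_2 \cdot 2r} < \tau_{B'}\bigr),$$
so the task reduces to bounding the escape probability on the right below by a constant times $r^{-1/2}$.

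For this I would invoke a $90^\circ$-rotated version of Lemma \ref{lemma: 6}: let $\tilde T_k$ denote the $x$-coordinate of $S$ at its $k$-th return to the $x$-axis, and let $\tilde\rho := \inf\{k \ge 1 : \tilde T_k \le 0\}$. By rotational symmetry of simple random walk, Lemma \ref{lemma: 6} applies verbatim to $(\tilde T, \tilde\rho)$, giving $P_0(\tilde T_{\tilde\rho} < -r') \ge c\,(r')^{-1/2}$ for every $r' \ge 1$. Choosing $r' := 2 C_2 r$, on the event $\{\tilde T_{\tilde\rho} < -r'\}$ the walk avoids $\{(x, 0) : -r' \le x \le 0\} \supset B'$ at all times $1, \ldots, \sigma_{\tilde\rho}$ (intermediate returns to the $x$-axis are at positive $x$-coordinate by definition of $\tilde\rho$, and the return at time $\sigma_{\tilde\rho}$ is at $x < -r' < -2r$), while its position at time $\sigma_{\tilde\rho}$ already lies outside $B(0, r') = B(0, 2 C_2 r)$. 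Hence $\{\tilde T_{\tilde\rho} < -r'\} \subset \{\tau_{C_2 \cdot 2r} < \tau_{B'}\}$ and
$$P_0\bigl(\tau_{C_2 \cdot 2r} < \tau_{B'}\bigr) \ge c\,(2 C_2 r)^{-1/2} \ge c'\,r^{-1/2},$$
which together with Theorem 1 of \cite{harmonic_measure_1987} completes the proof.

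The main obstacle I expect is carefully justifying the geometric set-inclusion in the last step: one must ensure the single choice $r' = 2 C_2 r$ is simultaneously large enough that $[-r',0]\times\{0\}$ contains $B'$ (so avoidance up to $\sigma_{\tilde\rho}$ implies non-visit of $B'$), and that reaching $(x, 0)$ with $x < -r'$ forces the walk to have already crossed $\partial^{out} B(0, 2 C_2 r)$. A secondary, minor check is to verify that Lemma \ref{lemma: 6}, originally phrased in terms of returns to the $y$-axis, transfers to the $x$-axis by rotational symmetry with identical constants. Beyond these points the argument is a routine assembly of results already established in the paper.
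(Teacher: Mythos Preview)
Your proposal is correct and follows essentially the same approach as the paper: both arguments obtain the upper bound from Theorem~1 of \cite{harmonic_measure_1987} and the lower bound by combining Lemma~\ref{lemma: 2.15} with Lemma~\ref{lemma: 6} (applied, via rotational symmetry, to a horizontal segment). The only cosmetic differences are your explicit translation of $y_\star$ to the origin (yielding $B' \subset B(0,2r)$ and escape radius $2C_2 r$) versus the paper's choice $r' = (C_2+1)r$, and your more careful spelling-out of the event inclusion $\{\tilde T_{\tilde\rho} < -r'\} \subset \{\tau_{2C_2 r} < \tau_{B'}\}$; the underlying logic is identical.
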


\begin{remark}
Note that Corollary \ref{corllary_1987}, complements Kesten's paper \cite{harmonic_measure_1987}, by showing that the straight line is a maximizer up to a multiplicative constant in $\mathbb{Z}^2$.
\end{remark}

\subsection{Spatial distribution at the escaping time}

Now with Lemma \ref{lemma: 6} shows that a 2-dimensional simple random walk starting at $y_n$ will escape $B_1(y_n,[n/3])$ before returning to $V_n'$ and thus $U_n$ with probability at least some constant times $n^{-1/2}$. We next show that, given the random walk successfully escapes, it is more likely to escape from the upper half of $\mathcal{S}_{1,n}$ that the lower half of it. To make it precise, define
$$
\mathcal{S}^U_{1,n}=\mathcal{S}_{1,n}\cap\{(x,y), y\ge n\},
$$
and
$$
\mathcal{S}^L_{1,n}=\mathcal{S}_{1,n}\cap\{(x,y), y\le n\}.
$$
Then for stopping time $\sigma=\tau_{\mathcal{S}_{1,n}}\wedge \tau_{V'_n}$, we want to show 
\beq
\label{Spatial_1}
P_{y_n}\left(\tau_{\mathcal{S}_{1,n}}< \tau_{V'_n}, S_{\sigma}\in \mathcal{S}^U_{1,n}\right)\ge P_{y_n}\left(\tau_{\mathcal{S}_{1,n}}< \tau_{V'_n}, S_{\sigma}\in \mathcal{S}^L_{1,n}\right).
\eeq 
To show this we can again use translation invariance to move everything centered at 0. For integer $m\ge 1$, let
$$
A_m^+=\{(x,y)\in \ZZ^2, \ x+y=m, \ x\in [0,m] \}\cup \{(x,y)\in \ZZ^2, \ -x+y=m, \ x\in [-m,0] \}
$$
and 
$$
A_m^-=\{(x,y)\in \ZZ^2, \ x+y=-m, \ x\in [-m,0,] \}\cup \{(x,y)\in \ZZ^2, \ -x+y=-m, \ x\in [0,m] \}
$$
be the upper and lower half of $\partial B_1(0,m)$. Then define $C^-_m=\{(0,-i), i=0,1,\cdots, m\}$, and $C^+_m=\{(0,i), i=0,1,\cdots, m\}$. To show \eqref{Spatial_1}, it suffices to prove the following lemma:

\begin{lemma}
\label{lemma: spatial}
For all integer $m$, define set
$$
E_m^-=A_m^+\cup A_m^-\cup C_m^-
$$
and stopping time 
$$
\sigma^-_m=\tau_{E_m^-}=\tau_{A_m^+}\wedge\tau_{A_m^-}\wedge \tau_{C_m^-}.
$$
We have
\beq
\label{Spatial_2}
P_0\left(\tau_{A_m^+}=\sigma_m^-\right)\ge P_0\left(\tau_{A_m^-}=\sigma_m^-\right).
\eeq
\end{lemma}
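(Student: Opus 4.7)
The plan is to reduce the lemma via the reflection $R:(x,y)\mapsto(x,-y)$ across the $x$-axis, together with two applications of the strong Markov property. Note $R$ fixes $0$ and swaps $A_m^+\leftrightarrow A_m^-$ and $C_m^+\leftrightarrow C_m^-$. Partitioning the first-hit point $S_{\sigma_m^-}\in E_m^-$ into disjoint pieces with probabilities $a_+:=P(S_{\sigma_m^-}\in A_m^+\setminus\{(\pm m,0)\})$, $a_-:=P(S_{\sigma_m^-}\in A_m^-\setminus\{(\pm m,0),(0,-m)\})$, $e:=P(S_{\sigma_m^-}\in\{(\pm m,0)\})$, $d:=P(S_{\sigma_m^-}=(0,-m))$, and $c:=P(S_{\sigma_m^-}\in C_m^-\setminus\{(0,-m)\})$, the lemma is equivalent to $a_+\geq a_-+d$. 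Using the $R$-symmetry $P_0(\tau_{A_m^+}<\tau_{A_m^-})=P_0(\tau_{A_m^-}<\tau_{A_m^+})$, careful bookkeeping of the overlap $(0,-m)\in A_m^-\cap C_m^-$ (so $\tau_{A_m^-}=\tau_{C_m^-}$ contributes $d$ only on the $A_m^-$ side), and a second application of $R$ trading $C_m^-$ for $C_m^+$, I obtain the identity
$$a_+-a_--d\ =\ P_0\bigl(\tau_{A_m^+}<\tau_{A_m^-},\tau_{C_m^+}<\tau_{A_m^+}\bigr)-P_0\bigl(\tau_{A_m^-}<\tau_{A_m^+},\tau_{C_m^+}<\tau_{A_m^-}\bigr).$$

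I then apply the strong Markov property at $\tau_{C_m^+}$. On the event $\{\tau_{C_m^+}<\tau_{A_m^+}\wedge\tau_{A_m^-}\}$ the walk sits at $(0,j)$ for some $j\in\{0,\dots,m-1\}$ (the value $j=m$ is excluded since $(0,m)\in A_m^+$ would force $\tau_{C_m^+}=\tau_{A_m^+}$). Writing $\alpha_j\geq 0$ for the common probability of arriving at $(0,j)$ first and $\beta_j^{\pm}:=P_{(0,j)}(\bar\tau_{A_m^{\pm}}<\bar\tau_{A_m^{\mp}})$, the right-hand side above becomes $\sum_{j=0}^{m-1}\alpha_j(\beta_j^+-\beta_j^-)$, so it suffices to show $\beta_j^+\geq\beta_j^-$ for each $j$.

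For the final step I use the strong Markov property at $T=\min\{n\geq 0:S_n^{(2)}=0\}$. The case $j=0$ is immediate by $R$-symmetry; for $j\geq 1$ the walk from $(0,j)$ lies in $\{y>0\}$ on $[0,T)$, hence $\bar\tau_{A_m^-}\geq T$. Every $x$-axis point $(a,0)$ is fixed by $R$, so $P_{(a,0)}(\bar\tau_{A_m^+}<\bar\tau_{A_m^-})=P_{(a,0)}(\bar\tau_{A_m^-}<\bar\tau_{A_m^+})$; decomposing by $S_T$ and applying strong Markov, the post-$T$ contributions to $\beta_j^+$ and $\beta_j^-$ coincide (the sub-case $|a|=m$ gives $0$ on both sides since $\bar\tau_{A_m^+}=\bar\tau_{A_m^-}=T$ is non-strict), which yields
$$\beta_j^+-\beta_j^-\ =\ P_{(0,j)}(\bar\tau_{A_m^+}<T)\ \geq\ 0.$$

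The main obstacle is the bookkeeping in the opening reduction around the three pairs of shared boundary points $(\pm m,0)$, $(0,\pm m)$, and $0$. In particular the asymmetric overlap at $(0,-m)$ produces the extra $d$-term on the $A_m^-$ side (absent on the $A_m^+$ side because $A_m^+\cap C_m^-=\emptyset$), and the whole argument works precisely because the second reflection matches $(0,-m)$ with its mirror image $(0,m)\in A_m^+\cap C_m^+$, causing this asymmetry to cancel and leaving the clean directional inequality that the strong Markov steps handle; the convention $\tau_A=\min\{n\geq 1:S_n\in A\}$ is essential so that $0\in C_m^-$ does not trigger $\tau_{C_m^-}=0$ and spoil this accounting.
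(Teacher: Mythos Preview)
Your argument is correct. The opening bookkeeping checks out: since $A_m^+\cap C_m^-=\emptyset$ while $A_m^-\cap C_m^-=\{(0,-m)\}$, one has $a_+=P_0(\tau_{A_m^+}<\tau_{A_m^-},\,\tau_{A_m^+}<\tau_{C_m^-})$ and $a_-+d=P_0(\tau_{A_m^-}<\tau_{A_m^+},\,\tau_{A_m^-}\le\tau_{C_m^-})$; subtracting these from the $R$-symmetric identity $P_0(\tau_{A_m^+}<\tau_{A_m^-})=P_0(\tau_{A_m^-}<\tau_{A_m^+})$ and reflecting the residual $C_m^-$-terms to $C_m^+$ yields exactly your displayed formula. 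The strong Markov step at $\tau_{C_m^+}$ is clean because $(0,j)\notin A_m^\pm$ for $j\le m-1$, and the final step is justified since $A_m^-\subset\{y\le 0\}$ forces $\bar\tau_{A_m^-}\ge T$, while on $\{\bar\tau_{A_m^+}>T\}$ the $L_1$-distance argument shows $|S_T^{(1)}|<m$, so the post-$T$ contributions match by reflection.

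Your route differs from the paper's in the second half. The paper also begins with the reflection reduction, but then conditions on the \emph{last} visit to $C_m^+$ before $\sigma_m^-$; this forces the residual comparison to take place while avoiding $C_m^+\cup C_m^-$, i.e.\ one must show $P_z(\tau_{A_m^+}=\sigma_m)\ge P_z(\tau_{A_m^-}=\sigma_m)$ for $z\in C_m^+$, which the paper handles via a nested-triangle monotonicity argument (introducing the sets $A_{m,i}^{\pm,r}$ and $C_{m,i}$). By contrast, you condition on the \emph{first} visit to $C_m^+$ and drop the avoidance constraint entirely, so the residual inequality becomes the unconstrained $P_{(0,j)}(\bar\tau_{A_m^+}<\bar\tau_{A_m^-})\ge P_{(0,j)}(\bar\tau_{A_m^-}<\bar\tau_{A_m^+})$, which yields to a single stop at the $x$-axis and symmetry. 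Your final step is therefore shorter and more transparent; the paper's approach, on the other hand, isolates a stronger pointwise statement (exit through $A_m^+$ is at least as likely as through $A_m^-$ even when the walk must avoid the entire vertical segment), which may be of independent use.
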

\begin{proof}
For 
$$
E_m=A_m^+\cup A_m^-\cup C_m^-\cup C_m^+,
$$
and stopping time 
$$
\sigma_m=\tau_{E_m}=\tau_{A_m^+}\wedge\tau_{A_m^-}\wedge \tau_{C_m^-}\wedge \tau_{C_m^+},
$$
by symmetry we have 
$$
P_0\left(\tau_{A_m^+}=\sigma_m\right)= P_0\left(\tau_{A_m^-}=\sigma_m\right).
$$
At the same time,
\beq
\label{Spatial_3}
\begin{aligned}
P_0\left(\tau_{A_m^+}=\sigma_m^-\right)&=P_0\left(\tau_{A_m^+}\le\tau_{C_m^+},\tau_{A_m^+}=\sigma_m^-\right)+P_0\left(\tau_{A_m^+}>\tau_{C_m^+},\tau_{A_m^+}=\sigma_m^-\right)\\
&=P_0\left(\tau_{A_m^+}=\sigma_m\right)+P_0\left(\tau_{C_m^+}<\sigma_m^-,\tau_{A_m^+}=\sigma_m^-\right),
\end{aligned}
\eeq
and
\beq
\label{Spatial_4}
\begin{aligned}
P_0\left(\tau_{A_m^-}=\sigma_m^-\right)&=P_0\left(\tau_{A_m^-}\le\tau_{C_m^+},\tau_{A_m^-}=\sigma_m^-\right)+P_0\left(\tau_{A_m^-}>\tau_{C_m^+},\tau_{A_m^-}=\sigma_m^-\right)\\
&=P_0\left(\tau_{A_m^-}=\sigma_m\right)+P_0\left(\tau_{C_m^+}<\sigma_m^-,\tau_{A_m^-}=\sigma_m^-\right).
\end{aligned}
\eeq
Thus it is sufficient to show 
\beq
\label{Spatial_4}
P_0\left(\tau_{C_m^+}<\sigma_m^-,\tau_{A_m^+}=\sigma_m^-\right)\ge P_0\left(\tau_{C_m^+}<\sigma_m^-,\tau_{A_m^-}=\sigma_m^-\right). 
\eeq
Under event $\{\tau_{C_m^+}<\sigma_m^-\}$, let random variable $N_m^+$ be the {\bf last time} $S$ visits $C_m^+$ in $[0,\sigma_m^--1]$. Note that $N_m^+$ is not a stopping time so we cannot use strong Markov property. But we can nonetheless have the decomposition: 
\beq
\label{Spatial_5}
\begin{aligned}
&P_0\left(\tau_{C_m^+}<\sigma_m^-,\tau_{A_m^+}=\sigma_m^-\right)\\
&=\sum_{k=1}^\infty \sum_{\tiny \begin{aligned}&x_1,x_2,\cdots, x_{k-1}\notin E_m^- \\ &x_k\in x_k\in C_m^+\setminus\{0,(0,m)\} \end{aligned}} 
P_0\left(S_1=x_1,\cdots, S_k=x_k, N_m^+=k,\tau_{C_m^+}<\sigma_m^-,\tau_{A_m^+}=\sigma_m^-\right).
\end{aligned}
\eeq
and 
\beq
\begin{aligned}
&P_0\left(\tau_{C_m^+}<\sigma_m^-,\tau_{A_m^-}=\sigma_m^-\right)\\
&=\sum_{k=1}^\infty \sum_{\tiny \begin{aligned}&x_1,x_2,\cdots, x_{k-1}\notin E_m^- \\ &x_k\in x_k\in C_m^+\setminus\{0,(0,m)\} \end{aligned}} 
P_0\left(S_1=x_1,\cdots, S_k=x_k,N_m^+=k,\tau_{C_m^+}<\sigma_m^-,\tau_{A_m^-}=\sigma_m^-\right).
\end{aligned}
\eeq
Note that for each $k$, $x_1,x_2,\cdots, x_{k-1}\notin E_m^-$, and $x_k\in x_k\in C_m^+\setminus\{0,(0,m)\}$, we have
$$ 
\begin{aligned}
&\{S_1=x_1,\cdots, S_k=x_k, N_m^+=k,\tau_{C_m^+}<\sigma_m^-,\tau_{A_m^+}=\sigma_m^-\}\\
=&\{S_1=x_1,\cdots, S_k=x_k, \ S_{k+1+\cdot} \text{ visit $A_m^+$ no later than it first visits $A_m^-\cup C_m^+\cup C_m^-$} \}.
\end{aligned}
$$
So by Markov property, we have
\beq
\label{Spatial_6}
\begin{aligned}
&P_0\left(S_1=x_1,\cdots, S_k=x_k, N_m^+=k,\tau_{C_m^+}<\sigma_m^-,\tau_{A_m^+}=\sigma_m^-\right)\\
&=P_0\left(S_1=x_1,\cdots, S_k=x_k\right) P_{x_k}\left(\tau_{A_m^+}=\sigma_m\right).
\end{aligned}
\eeq
Plugging back in \eqref{Spatial_5} we have 
\beq
\label{Spatial_7}
\begin{aligned}
&P_0\left(\tau_{C_m^+}<\sigma_m^-,\tau_{A_m^+}=\sigma_m^-\right)\\
&=\sum_{k=1}^\infty \sum_{\tiny \begin{aligned}&x_1,x_2,\cdots, x_{k-1}\notin E_m^- \\ &x_k\in x_k\in C_m^+\setminus\{0,(0,m)\} \end{aligned}} 
P_0\left(S_1=x_1,\cdots, S_k=x_k\right) P_{x_k}\left(\tau_{A_m^+}=\sigma_m\right),
\end{aligned}
\eeq
while the same argument for $A_m^-$ gives us 
\beq
\label{Spatial_8}
\begin{aligned}
&P_0\left(\tau_{C_m^+}<\sigma_m^-,\tau_{A_m^-}=\sigma_m^-\right)\\
&=\sum_{k=1}^\infty \sum_{\tiny \begin{aligned}&x_1,x_2,\cdots, x_{k-1}\notin E_m^- \\ &x_k\in x_k\in C_m^+\setminus\{0,(0,m)\} \end{aligned}} 
P_0\left(S_1=x_1,\cdots, S_k=x_k\right) P_{x_k}\left(\tau_{A_m^-}=\sigma_m\right).
\end{aligned}
\eeq
Comparing \eqref{Spatial_7} and \eqref{Spatial_8} term by term, one can see it suffices to show that for all $z=(0,j)\in x_k\in C_m^+\setminus\{0,(0,m)\}$, 
\beq
\label{Spatial_9}
P_{z}\left(\tau_{A_m^+}=\sigma_m\right)\ge  P_{z}\left(\tau_{A_m^-}=\sigma_m\right). 
\eeq
To show \eqref{Spatial_9}, one first sees that under $\{\tau_{A_m^+}=\sigma_m\}$ or $\{\tau_{A_m^-}=\sigma_m\}$, a random walk starting at $z$ has to move horizontally at the first step then remain in the right or left half triangle of $B_1(0,m)$ until it exits from $A_m^+$ or $A_m^-$. Then for all integer $i\in [0,m]$ we define
$$
C_{m,i}=\{(0,y),  \ 2i-m\le y\le m\},
$$
\begin{align*}
A_{m,i}^{+,r}=&\{(x,y)\in \ZZ^2, \ x+y=m, \ x\in [0,m-i] \}
\end{align*}
and 
\begin{align*}
A_{m,i}^{-,r}=\cup \{(x,y)\in \ZZ^2, \ -x+y=2i-m, \ x\in [0,m-i] \}. 
\end{align*}
Now we have by symmetry 
\beq
\label{Spatial_10}
P_{z}\left(\tau_{A_m^+}=\sigma_m\right)=\frac{1}{2} P_{(1,j)}\left(\bar\tau_{A_{m,0}^{+,r}}\le\bar\tau_{A_{m,0}^{-,r}},\bar\tau_{A_{m,0}^{+,r}}\le\bar\tau_{C_{m,0}} \right).
\eeq
and
\beq
\label{Spatial_11}
P_{z}\left(\tau_{A_m^-}=\sigma_m\right)=\frac{1}{2} P_{(1,j)}\left(\bar\tau_{A_{m,0}^{-,r}}\le\bar\tau_{A_{m,0}^{+,r}},\bar\tau_{A_{m,0}^{-,r}}\le\bar\tau_{C_{m,0}} \right).
\eeq
The right hand side of the Equation \eqref{Spatial_11} equals to 0 when $j=m-1$. Otherwise, note that if a random walk starting from $(1,j)$ want to visit $A_{m,0}^{-,r}$ before visiting $A_{m,0}^{+,r}$ or $C_{m,0}$, it has to first get through $A_{m,j}^{-,r}$ before visiting $A_{m,0}^{+,r}$ or $C_{m,0}$. Thus
$$
P_{(1,j)}\left(\bar\tau_{A_{m,0}^{-,r}}\le\bar\tau_{A_{m,0}^{+,r}},\bar\tau_{A_{m,0}^{-,r}}\le\bar\tau_{C_{m,0}} \right)\le P_{(1,j)}\left(\bar\tau_{A_{m,j}^{-,r}}\le\bar\tau_{A_{m,0}^{+,r}},\bar\tau_{A_{m,j}^{-,r}}\le\bar\tau_{C_{m,0}} \right). 
$$
Then note that in order to have a random walk starting from $(1,j)$ get to $A_{m,j}^{-,r}$ before visiting $A_{m,0}^{+,r}$ or $C_{m,0}$, it only need to avoid $A_{m,j}^{+,r}$ and $C_{m,j}$. So we have 
$$
P_{(1,j)}\left(\bar\tau_{A_{m,j}^{-,r}}\le\bar\tau_{A_{m,0}^{+,r}},\bar\tau_{A_{m,j}^{-,r}}\le\bar\tau_{C_{m,0}} \right)=P_{(1,j)}\left(\bar\tau_{A_{m,j}^{-,r}}\le\bar\tau_{A_{m,j}^{+,r}},\bar\tau_{A_{m,j}^{-,r}}\le\bar\tau_{C_{m,j}} \right).
$$ 
By symmetry one can see 
$$
P_{(1,j)}\left(\bar\tau_{A_{m,j}^{-,r}}\le\bar\tau_{A_{m,j}^{+,r}},\bar\tau_{A_{m,j}^{-,r}}\le\bar\tau_{C_{m,j}} \right)=P_{(1,j)}\left(\bar\tau_{A_{m,j}^{+,r}}\le\bar\tau_{A_{m,j}^{-,r}},\bar\tau_{A_{m,j}^{+,r}}\le\bar\tau_{C_{m,j}} \right).
$$
Moreover, note that a random walk starting from $(1,j)$ must exist the smaller triangle bounded by $A_{m,j}^{+,r}$, $A_{m,j}^{-,r}$, and $C_{m,j}$ before exiting the larger on bounded by $A_{m,0}^{+,r}$, $A_{m,0}^{-,r}$, and $C_{m,0}$. I.e., 
$$
\sigma^r_j=\bar\tau_{A_{m,j}^{+,r}}\wedge\bar\tau_{A_{m,j}^{-,r}}\wedge \tau_{C_{m,j}}\le \sigma^r=\bar\tau_{A_{m,0}^{+,r}}\wedge\bar\tau_{A_{m,0}^{-,r}}\wedge \tau_{C_{m,0}}.
$$
Thus
\beq
\label{Spatial_12}
\begin{aligned}
P_{(1,j)}&\left(\bar\tau_{A_{m,j}^{+,r}}\le\bar\tau_{A_{m,j}^{-,r}},\bar\tau_{A_{m,j}^{+,r}}\le\bar\tau_{C_{m,j}} \right)\\
&=P_{(1,j)}\left(\bar\tau_{A_{m,j}^{+,r}}\le \sigma^r_j\right)\\
&\le P_{(1,j)}\left(\bar\tau_{A_{m,j}^{+,r}}\le \sigma^r\right)\\
&\le P_{(1,j)}\left(\bar\tau_{A_{m,j}^{+,r}}\le\bar\tau_{A_{m,0}^{-,r}}\wedge \tau_{C_{m,0}} \right)\\
&= P_{(1,j)}\left(\bar\tau_{A_{m,j}^{+,r}}\le\bar\tau_{A_{m,0}^{-,r}},\bar\tau_{A_{m,j}^{+,r}}\le\bar\tau_{C_{m,0}} \right)\\
&\le P_{(1,j)}\left(\bar\tau_{A_{m,0}^{+,r}}\le\bar\tau_{A_{m,0}^{-,r}},\bar\tau_{A_{m,j}^{+,r}}\le\bar\tau_{C_{m,0}} \right).
\end{aligned}
\eeq
Finally note that the right hand side of the last inequality in \eqref{Spatial_12} is exactly the right hand side of \eqref{Spatial_10}. 
\end{proof}
With Lemma \ref{lemma: spatial}, we immediately get \eqref{Spatial_1} from translation invariance. 

\subsection{Proof of Theorem \ref{theorem: maximum_path}}

Now we have all the tools we need to finish the proof of Theorem \ref{theorem: maximum_path}. Recall \eqref{Theorem_1_1} and apply it to $U_n$ and $y_n$, 
$$
\begin{aligned}
&H_{U_n,N}(y_n)\\
&=E_{y_n}\Big[\text{number of visits to $L_N$ in time interval } [0,\tau_{U_n}]\Big]\\
&=\sum_{w\in L_N}P_{y_n}(\tau_{L_N}< \tau_{U_n}, S_{\tau_{L_N}}=w) E_w\Big[\text{number of visits to $L_N$ in time interval} [0,\tau_{U_n}]\Big].
\end{aligned}
$$
Note that for all $w\in L_N$,
$$
P_w(\tau_{L_n}\le \tau_{U_n})=1. 
$$
We have
$$
\begin{aligned}
&H_{U_n,N}(y_n)\\
&\ge \sum_{w\in L_N}P_{y_n}(\tau_{L_N}< \tau_{U_n}, S_{\tau_{L_N}}=w) E_w\Big[\text{number of visits to $L_N$ in time interval} [0,\tau_{L_n}]\Big]\\
&=4P_{y_n}(\tau_{L_N}< \tau_{U_n}) (N-n).
\end{aligned}
$$
Then according again to strong Markov property and the fact that a random walk starting from $y_n$ has to visit $L_{2n}$ before $L_N$, 
$$
P_{y_n}(\tau_{L_N}< \tau_{U_n})=\sum_{w\in L_{2n}} P_{y_n}(\tau_{L_{2n}}< \tau_{U_n}, S_{\tau_{L_{2n}}}=w) P_w(\tau_{L_N}< \tau_{U_n}).
$$
Again, note that for all $w\in L_{2n}$
$$
P_w(\tau_{L_N}< \tau_{U_n})\ge P_w(\tau_{L_N}< \tau_{L_n})=\frac{n}{N-n}.
$$
Thus to prove Theorem \ref{theorem: maximum_path} it is sufficient to show that for $N$ sufficiently larger than $n$, 
\beq
\label{theorem_2_1}
P_{y_n}(\tau_{L_{2n}}< \tau_{U_n}) \ge c n^{-1/2} 
\eeq 
To show \eqref{theorem_2_1}, we have
$$
\begin{aligned}
P_{y_n}(\tau_{L_{2n}}< \tau_{U_n})&\ge \sum_{w\in\mathcal{S}^U_{1,n}} P_{y_n}\left(\tau_{\mathcal{S}_{1,n}}< \tau_{V_n}, S_{\tau_{\mathcal{S}_{1,n}}}=w\in \mathcal{S}^U_{1,n}\right)P_w(\tau_{L_{2n}}< \tau_{U_n})\\
&= \sum_{w\in\mathcal{S}^U_{1,n}} P_{y_n}\left(\tau_{\mathcal{S}_{1,n}}< \tau_{V'_n}, S_{\sigma}=w\in \mathcal{S}^U_{1,n}\right)P_w(\tau_{L_{2n}}< \tau_{U_n}). 
\end{aligned}
$$
Note that by invariance principle there is a constant $c$ such that for any sufficiently large $n$ and $w\in \mathcal{S}^U_{1,n}$, 
$$
P_w(\tau_{L_{2n}}< \tau_{U_n})\ge c. 
$$
Thus
\beq
\label{theorem_2_2}
P_{y_n}(\tau_{L_{2n}}< \tau_{U_n})\ge c P_{y_n}\left(\tau_{\mathcal{S}_{1,n}}< \tau_{V'_n}, S_{\sigma}=w\in \mathcal{S}^U_{1,n}\right).
\eeq
Then by \eqref{escaping probability} and \eqref{Spatial_1}, we have
\beq
\label{theorem_2_3}
P_{y_n}\left(\tau_{\mathcal{S}_{1,n}}< \tau_{V'_n}, S_{\sigma}\in \mathcal{S}^U_{1,n}\right)\ge \frac{1}{2}P_{y_n}\left(\tau_{\mathcal{S}_{1,n}}< \tau_{V'_n}\right)\ge cn^{-1/2}. 
\eeq 
Thus, the proof of Theorem \ref{theorem: maximum_path} is complete. \qed

\section{Total harmonic measure on finite sets}
\label{section: finite}
\subsection{Upper bound in Theorem \ref{theorem: theorem: uniform_2}}
To show the upper bound in \eqref{uniform upper bound_finite_total}, without loss of generality we can assume $B\cap L_0\not=\O$, which implies that $\min_{x\in B}\{x_2\}=0$. Otherwise, for $x_0=(x_{1,0},x_{2,0})$ that has the smallest height in $B$, define
$$ 
B'=B\cup\{(x_{1,0},j),  \ j=0,1,\cdots, x_{2,0}-1\}. 
$$
By Proposition \ref{proposition_finite}, we have $H_{B'}\ge H_B$ and $|B'|=|B|+\min_{x\in B}\{x_2\}$. Thus it suffices for us to prove that for any connected and finite $B$ with $B\cap L_0\not=\O$, 
\beq
\label{finite_upper_bound_1}
H_B\le C|B|. 
\eeq 
And we prove \eqref{finite_upper_bound_1} inductively. When $|B|=1$, we have proved the desired upper bound in \eqref{weak bound}. Suppose we have proved \eqref{finite_upper_bound_1} for all connected $B$ with $|B|\le n$, $B\cap L_0\not=\O$. Then for a $B$ such that $|B|=n+1$, $B\cap L_0\not=\O$, we first show that one can remove one vertex in $B$ and still have a connected subset intersecting $L_0$. In fact we prove something even stronger: 
\begin{lemma}
\label{lemma introduction connected}
For any finite and connected $B\subset \ZZ^2$ with $|B|\ge 2$, there are always two points $x_1,x_2\in B$ such that $B\setminus\{x_1\}$ and $B\setminus\{x_2\}$ are both connected. 
\end{lemma}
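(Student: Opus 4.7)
The plan is to reduce this to the classical fact that every finite tree with at least two vertices has at least two leaves, and then observe that leaves of a spanning tree are non-cut vertices in the ambient graph.

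First I would view $B$ as the vertex set of a finite connected subgraph of $\mathbb{Z}^2$, with edges drawn between pairs of points in $B$ at $\ell^1$-distance one. Call this graph $G_B$; it is connected by assumption. The goal is to find two distinct vertices $x_1, x_2 \in B$ each of whose removal leaves $G_B$ connected.

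Next I would choose a spanning tree $T$ of $G_B$, which exists since $G_B$ is finite and connected. Since $|B| \geq 2$, the tree $T$ has at least two vertices, and hence at least two leaves; the cleanest way to see this is to take a longest path $v_0 v_1 \cdots v_k$ in $T$ (with $k \geq 1$), and observe that both endpoints $v_0$ and $v_k$ must be leaves, since otherwise the path could be extended. Set $x_1 = v_0$ and $x_2 = v_k$.

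Finally, I would verify that removing a leaf of $T$ keeps $B$ connected in $\mathbb{Z}^2$. For $i \in \{1,2\}$, the subgraph $T - x_i$ is still a tree on $B \setminus \{x_i\}$, hence connected; since it is a subgraph of $G_B$ restricted to $B \setminus \{x_i\}$, this shows $B \setminus \{x_i\}$ induces a connected subgraph of $\mathbb{Z}^2$, which is exactly the required conclusion. There is no real obstacle here — the only slightly subtle point is making sure one does not accidentally identify the same leaf twice, which is why one picks the two endpoints of a longest path rather than arbitrary leaves, guaranteeing $x_1 \neq x_2$ as soon as $|B| \geq 2$.
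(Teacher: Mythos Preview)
Your argument is correct and considerably cleaner than the paper's. The paper proves the lemma by induction on $|B|$: it first establishes an auxiliary observation (that for any connected $B$ with $|B|\le n$ and any exterior point $x_0$ adjacent to $B$, one can remove a vertex of $B$ while keeping both connectivity and the adjacency to $x_0$), and then, for $|B|=n+1$, picks an arbitrary $y\in B$ and splits into cases according to whether $B\setminus\{y\}$ is connected, applying the observation to $B\setminus\{y\}$ or to its connected components. Your spanning-tree argument bypasses all of this: the existence of two leaves in any tree on at least two vertices immediately yields two distinct non-cut vertices of $G_B$, and the longest-path trick neatly guarantees they are distinct. The paper's inductive route is more hands-on and self-contained (it never invokes spanning trees), but your approach is the standard graph-theoretic one and is both shorter and more transparent; it also makes the subsequent remark in the paper (that one of the two removable points can be chosen outside $L_0$ when $|B\cap L_0|=1$) completely obvious.
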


\begin{remark}
With Lemma \ref{lemma introduction connected}, we can make sure that starting from $|B|=n+1$, $B\cap L_0\not=\O$, we can remove one point and it will not be in $L_0$ if $|B\cap L_0|=1$. Thus the new connected subset still intersects $L_0$. 
\end{remark}
\begin{proof}
Again, we prove this lemma by induction. For $|B|=2$ or $|B|=3$, it is easy to check the lemma holds. Now suppose it also holds for all connected $|B|\le n$. Then from the assumption we also have that 

\vspace{0.1 in}

\noindent {\bf Observation 1:} for any connected $B$ such that $|B|\le n$ and any $x_0\in B^c$ such that $d(x_0,B)=1$, where 
$$
d(x,B)=\inf_{y\in B}\{|x-y|\}, 
$$
there must exists an $x\in B$ such that $B\setminus\{x\}$ is connected while $d(x_0,B\setminus\{x\})=1$. 

To see this, note that if 
$$
\big|\{y\in B: \ |x_0-y|=1\}\big|\ge 2
$$
then removing one point will not change the distance between $x$ and the smaller subset. So either $x_1$ or $x_2$ in the inductive assumption is good. Otherwise, let $y_0$ be the only point in $B$ neighboring $x_0$. By the inductive assumption we have two points $x_1$ and $x_2$ which we can remove, and one of them must not be $y_0$. Thus we still have an $x\in B$ such that $B\setminus\{x\}$ is connected while $d(x_0,B\setminus\{x\})=1$. 

With the observation above, now for any connected $B$ such that $|B|=n+1$, we first choose one point $y$ arbitrarily from $B$. If $B\setminus\{y\}$ is connected, note that $|B\setminus\{y\}|=n$ and that $d(y,B\setminus\{y\})=1$. Our observation above shows that there must be a $y'\in B\setminus\{y\}$ such that $B\setminus\{y,y'\}$ is also connected and 
$$
d(y,B\setminus\{y,y'\})=1.
$$
This implies that $B\setminus\{y'\}=B\setminus\{y,y'\}\cup \{y\}$ is connected. And we have found our two ``removable" points. Otherwise, if $B\setminus\{y\}$ is not connected, it must have at least two connected components, say $B_1$ and $B_2$. 
\begin{remark}
If $B\setminus\{y\}$ has more than two connected components, just choose two of them arbitrarily. 
\end{remark}
Let 
$$
d(A,B)=\inf_{x\in A, \ y\in B}\{|x-y|\}
$$
for all finite $A$ and $B$. Noting that $B$ is connected, we must have $d(B_1,B\setminus B_1)=1$. But since $B_1$ is not connected to $B\setminus (B_1\cup \{y\})$, we also have 
$$
d\big(B_1,B\setminus (B_1\cup \{y\})\big)\ge 2. 
$$
Thus one can now see $d(y,B_1)=1$ and $d(y,B_2)=1$. Then note that $|B_1|$ and $|B_2|$ are both less than $n$. So by Observation 1 we again have there is a $x_1\in B_1$ such that $B_1\setminus\{x_1\}$ is connected and that 
$$ 
d(y,B_1\setminus\{x_1\})=1,
$$
which implies that $(B_1\setminus\{x_1\})\cup\{y\}$ is connected, 
$$
d\big((B_1\setminus\{x_1\})\cup\{y\}, B_2\big)=1,
$$
and that $B\setminus\{x_1\}=(B_1\setminus\{x_1\})\cup\{y\}\cup B_2$ is connected. The same argument on $B_2$ also gives that there is a $x_2\in B_2$ such that $B\setminus\{x_2\}$ is connected. Finally note that $B_1$ and $B_2$ are different connected component, which implies that $B_1\cap B_2=\O$. So we have $x_1\not=x_2$ and the proof of this lemma is complete. 
\end{proof}

With Lemma \ref{lemma introduction connected}, we continue with the inductive argument for the growth rate of $H_B$. For any finite and connected $B$ such that $|B|=n+1$, $B\cap L_0\not=\O$, there has to be an $x=(x_1,x_2)\in B$ such that $\tilde B=B\setminus \{x\}$ is still connected and  $\tilde B\cap L_0\not=\O$. By inductive assumption we know that $H_{\tilde B}\le Cn$. So now we can concentrate on comparing $H_{\tilde B}$ and $H_{B}$. 

Since $B$ is finite, for any $v\in B$ sufficiently large $N$ we have 
$$
H_{B,N}(v)=\sum_{z\in L_N} P_z(\tau_{v}=\tau_{B\cup L_0}).
$$
And thus 
$$
H_{B,N}=\sum_{v\in B}H_{B,N}(v)=\sum_{z\in L_N} P_z(\tau_{B}\le \tau_{L_0})
$$
while 
$$
H_{\tilde B,N}=\sum_{z\in L_N} P_z(\tau_{\tilde B}\le \tau_{L_0}).
$$
Note that for each $z\in L_N$, 
\beq
\label{inductive_H_1}
P_z(\tau_{B}\le \tau_{L_0})-P_z(\tau_{\tilde B}\le \tau_{L_0})=P_z(\tau_x\le \tau_{L_0}<\tau_{\tilde B}). 
\eeq
Moreover, by strong Markov property, 
\beq
\label{inductive_H_2}
P_z(\tau_x\le \tau_{L_0}<\tau_{\tilde B})=P_z(\tau_x= \tau_{B\cup L_0}) P_x(\tau_{L_0}<\tau_{\tilde B}).
\eeq
Combining \eqref{inductive_H_1} and  \eqref{inductive_H_2}, we have 
\beq
\label{inductive_H_3}
\begin{aligned}
H_{B,N}-H_{\tilde B,N}&=P_x(\tau_{L_0}<\tau_{\tilde B})\sum_{z\in L_N} P_z(\tau_x= \tau_{B\cup L_0})\\
 &=P_x(\tau_{L_0}<\tau_{\tilde B}) H_{B,N}(x). 
 \end{aligned}
\eeq
If $x_2=0$, note that in \eqref{weak bound 2} we have $H_{B,N}(x)\le 1$, which implies that $H_{B,N}-H_{\tilde B,N}\le 1$.  And for $x_2\ge 1$, we have by Theorem \ref{theorem: uniform_path}
\beq
\label{inductive_H_4}
H_{B,N}-H_{\tilde B,N}\le C\sqrt{x_2} P_x(\tau_{L_0}<\tau_{\tilde B}). 
\eeq
And since $\tilde B$ is connected. There must be a finite nearest neighbor path 
$$
\mathcal{\tilde P}_x=\{x=\tilde P_0,\tilde P_1,\tilde P_2,\cdots, \tilde P_{k_n}\in L_0\}
$$
connecting $x$ and $L_0$, where $|\tilde P_i-\tilde P_{i+1}|=1$. And since $d(x,L_0)=x_2$, $|x-P_{k_n}|\ge x_2$. Define
$$
\tilde m_x=\inf\{i: |\tilde P_i-x|\ge x_2\}
$$
and 
$$
\mathcal{\tilde Q}_x=\{\tilde P_1,\tilde P_2,\cdots, \tilde P_{\tilde m_x}\}.
$$
One can immediately see that $\mathcal{\tilde Q}_x\subset \tilde B$ and that 
$$
\mathcal{\tilde Q}_x\subset B(x,2x_2). 
$$
Thus
$$
\frac{1}{4}P_x(\tau_{L_0}<\tau_{\tilde B})\le \frac{1}{4}P_{x}(\tau_{L_0}<\tau_{\mathcal{\tilde Q}_x})\le P_{\tilde P_1}(\tau_{L_0}<\tau_{\mathcal{\tilde Q}_x}). 
$$
Then for sufficiently large $x_2$, recalling the $C_2$ defined in Lemma \ref{lemma: inverse_4}, let $r=x_2/2C_2$ and $D_x=\mathcal{\tilde Q}_x\cap B(\tilde P_1,r)$. Note that for sufficiently large $x_2$, a random walk starting at $\tilde P_1$ has to hit $\partial^{out}B(\tilde P_1,C_2\cdot r)$ before reaching $L_0$. We have by \eqref{basic 1}, \eqref{basic 2} and Lemma \ref{lemma: 2.15},
\beq
\label{inductive_H_5}
\begin{aligned}
P_{\tilde P_1}(\tau_{L_0}<\tau_{\mathcal{\tilde Q}_x})&\le P_{\tilde P_1}(\tau_{\partial^{out}B(x,C_2\cdot r)}<\tau_{\mathcal{\tilde Q}_x\cap B(x,C_2\cdot r)})\\
&\le P_{\tilde P_1}(\tau_{\partial^{out}B(x,C_2\cdot r)}<\tau_{D_x})\\
&\le c_3^{-1}\mu_{D_x-\tilde P_1}(0)\le Cx_2^{-1/2}. 
\end{aligned}
\eeq
Combining \eqref{inductive_H_4} and \eqref{inductive_H_5} we have that there is a constant $C$ independent to $n,N$ and $x$, such that 
$$
H_{B,N}-H_{\tilde B,N}\le C.
$$
Thus the proof of \eqref{uniform upper bound_finite_total} is complete. 
 
\subsection{Lower bound in Theorem \ref{theorem: theorem: uniform_2}}
First, \eqref{uniform lower bound_finite_total_1} is obvious. Now we show the lower bound in \eqref{uniform lower bound_finite_total}. Since $B$ is finite, let $\bar x=(\bar x_1, \bar x_2)$ be a point in $B$ such that 
$$
\bar x_2=\max_{x\in B}\{x_2\}.
$$
Note that by Proposition \ref{proposition_finite}, $H_B\ge H_{\bar x}$. It suffices to prove \eqref{uniform lower bound_finite_total} for the single element set $\{\bar x\}$. Recall that 
$$
\begin{aligned}
H_{\{\bar x\}, N}&=E_{\bar x}\Big[\text{number of visits to $L_N$ in }[0,\tau_{\{\bar x\}\cup L_0}]\Big]\\
&\ge P_{\bar x}(\tau_{L_N}<\tau_{\{\bar x\}\cup L_0}) \inf_{z\in L_N} E_{z}\Big[\text{number of visits to $L_N$ in }[0,\tau_{\{\bar x\}\cup L_0}]\Big]
\end{aligned}
$$
and that for sufficiently large $N$ and any $z\in L_N$, 
$$
\begin{aligned}
&E_{z}\Big[\text{number of visits to $L_N$ in }[0,\tau_{\{\bar x\}\cup L_0}]\Big]\\
\ge &E_{z}\Big[\text{number of visits to $L_N$ in }[0,\tau_{L_{x_2}}]\Big]=4(N-x_2)\ge 2N. 
\end{aligned}
$$
To prove \eqref{uniform lower bound_finite_total} it is sufficient to show that for sufficiently large $x_2$
\beq
\label{lower_finite_1}
P_{\bar x}(\tau_{L_N}<\tau_{\{\bar x\}\cup L_0}) \ge \frac{c \bar x_2}{N \log(\bar x_2)}. 
\eeq
Now let $n_{\bar x}$ be the largest odd number less than $\bar x_2$. We define $B_1(\bar x,n_{\bar x})$ be the $L_1$ ball centered at $x$ with radius $n_x$. Moreover we define 
$$
W^{1,+}_{\bar x}=\partial B_1(\bar x,n_{\bar x})\cap \{(y_1,y_2), \ y_2\ge \bar x_2+(n_{\bar x}+1)/2\}
$$
be the upper corner of $\partial B_1(\bar x,n_{\bar x})$. By symmetry we have 
$$
P_{\bar x}\left(\tau_{\partial B_1(\bar x,n_{\bar x})}<\tau_{\bar x},  \ S_{\tau_{\partial B_1(\bar x,n_{\bar x})}}\in W^{1,+}_{\bar x}\right)=\frac{1}{4}P_{\bar x}(\tau_{\partial B_1(\bar x,n_{\bar x})}<\tau_{\bar x}).
$$
Thus we have 
\beq
\label{lower_finite_2}
\begin{aligned}
P_{\bar x}&(\tau_{L_N}<\tau_{\{\bar x\}\cup L_0}) \\
&\ge P_{\bar x}\left(\tau_{\partial B_1(\bar x,n_{\bar x})}<\tau_{\bar x},  \ S_{\tau_{\partial B_1(\bar x,n_{\bar x})}}\in W^{1,+}_{\bar x}\right) \inf_{y\in W^{1,+}_{\bar x}} P_y(\tau_{L_N}<\tau_{L_{\bar x_2}})\\
&\ge \frac{1}{4}P_{\bar x}(\tau_{\partial B_1(\bar x,n_{\bar x})}<\tau_{\bar x}) \inf_{y\in W^{1,+}_{\bar x}} P_y(\tau_{L_N}<\tau_{L_{\bar x_2}}).
\end{aligned}
\eeq
Then note that for any $y\in W^{1,+}_{\bar x}$
$$
 P_y(\tau_{L_N}<\tau_{L_{\bar x_2}})\ge \frac{n_{\bar x}}{2N}\ge \frac{\bar x_2}{4N}. 
$$
Thus it is sufficient for us to prove that 
\begin{lemma}
\label{lemma:lower_finite}
There is a constant $c>0$ such that for all sufficiently large $\bar x_2$, 
\beq
\label{lower_finite_2}
P_{\bar x}(\tau_{\partial B_1(\bar x,n_{\bar x})}<\tau_{\bar x})\ge \frac{c}{\log(\bar x_2)}. 
\eeq
\end{lemma}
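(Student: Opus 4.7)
By translation invariance we may assume $\bar x = 0$, so the target inequality becomes $P_0(\tau_{\partial B_1(0,n_{\bar x})} < \tau_0) \ge c/\log(\bar x_2)$. Conditioning on the first step of the walk,
$$
P_0\bigl(\tau_{\partial B_1(0,n_{\bar x})} < \tau_0\bigr) \;=\; \frac{1}{4}\sum_{w:\,|w|=1} P_w\bigl(\tau_{\partial B_1(0,n_{\bar x})} < \tau_0\bigr),
$$
so it suffices to prove the bound $P_w(\tau_{\partial B_1(0,n_{\bar x})} < \tau_0) \ge c/\log(\bar x_2)$ for each nearest neighbor $w$ of the origin. This is a classical escape-probability statement, and the plan is to obtain it directly from the potential kernel $a(\cdot)$ introduced via \eqref{discrete Green}--\eqref{property_a_5}.

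The key step is to apply optional stopping to the nonnegative martingale $a(S_{n\wedge\tau_0})$ at the stopping time $\sigma_w := \tau_0 \wedge \tau_{\partial B_1(0,n_{\bar x})}$, starting from $S_0 = w$. Since $S_{\sigma_w}$ stays inside the finite set $B_1(0,n_{\bar x})$, the martingale is uniformly bounded up to $\sigma_w$; and since simple random walk is recurrent in $\ZZ^2$, we have $\sigma_w \le \tau_0 < \infty$ almost surely, so optional stopping applies and gives
$$
1 \;=\; a(w) \;=\; E_w\bigl[a(S_{\sigma_w})\bigr].
$$
On $\{\tau_0 < \tau_{\partial B_1(0,n_{\bar x})}\}$ we have $a(S_{\sigma_w}) = a(0) = 0$ by \eqref{property_a_1}. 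On the complementary event $\{\tau_{\partial B_1(0,n_{\bar x})} < \tau_0\}$, we have $|S_{\sigma_w}|_1 = n_{\bar x}$, so $|S_{\sigma_w}| \le n_{\bar x}$, and \eqref{property_a_5} yields the deterministic upper bound
$$
a(S_{\sigma_w}) \;\le\; \frac{1}{2\pi}\log(n_{\bar x}) + C_0 + C_1.
$$

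Combining these two observations,
$$
1 \;\le\; P_w\bigl(\tau_{\partial B_1(0,n_{\bar x})} < \tau_0\bigr)\cdot\Bigl(\tfrac{1}{2\pi}\log(n_{\bar x}) + C_0 + C_1\Bigr),
$$
from which the lower bound $P_w(\tau_{\partial B_1(0,n_{\bar x})} < \tau_0) \ge c/\log(\bar x_2)$ follows for $\bar x_2$ large enough, since $n_{\bar x} \le \bar x_2$. Summing over the four neighbors $w$ of the origin completes the proof.

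I do not anticipate a serious obstacle here: the argument is essentially a one-line application of optional stopping to the same martingale already used in Lemma \ref{lemma: inverse_3}. The only mild care needed is verifying that optional stopping applies (which follows from 2D recurrence together with boundedness of $a$ on the finite ball $B_1(0,n_{\bar x})$) and that the $L_1$-boundary automatically keeps $|S_{\sigma_w}|_2 \le n_{\bar x}$, so the upper bound on $a(S_{\sigma_w})$ is valid without any additional geometric input.
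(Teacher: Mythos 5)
Your proof is correct, and it takes a genuinely different and more self-contained route than the paper's. You recenter at the origin, condition on the first step, and apply optional stopping to the potential-kernel martingale $a(S_{n\wedge\tau_0})$ --- the same nonnegative martingale used in Lemma~\ref{lemma: inverse_3} --- at $\sigma_w=\tau_0\wedge\tau_{\partial B_1(0,n_{\bar x})}$, starting from a neighbor $w$ of the origin. With $a(w)=1$ by \eqref{property_a_2}, $a(0)=0$ by \eqref{property_a_1}, and the logarithmic upper bound \eqref{property_a_5} on the boundary of the $L^1$ ball, optional stopping yields $1 = E_w[a(S_{\sigma_w})] \le P_w\bigl(\tau_{\partial B_1(0,n_{\bar x})}<\tau_0\bigr)\bigl(\tfrac{1}{2\pi}\log n_{\bar x}+C_0+C_1\bigr)$, which is exactly the stated rate. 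The paper instead combines two softer ingredients: it bounds $E_{\bar x}[\tau_{\partial B_1(\bar x,n_{\bar x})}]\le \bar x_2^2$ using the martingale $(S_{2,n}-\bar x_2)^2-n/2$ and Markov's inequality, then invokes the classical return-time asymptotic $P_{\bar x}(\tau_{\bar x}>t)=\pi/\log t+O(1/\log^2 t)$ cited from external references, evaluated at $t=\bar x_2^3$, and subtracts the two error probabilities. Your argument avoids any external input beyond \eqref{property_a_1}--\eqref{property_a_5}, which the paper already quotes, and is the natural dual of the optional-stopping computations in Lemmas~\ref{lemma: inverse_3} and~\ref{lemma: inverse_4}. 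The only step requiring care --- that $\sigma_w\le\tau_0<\infty$ a.s.\ by two-dimensional recurrence and $a$ is bounded on the finite ball, so optional stopping is legitimate --- you address and handle correctly.
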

\begin{proof}
For $S_n=(S_{1,n},S_{2,n})$ to be the simple random walk starting at $\bar x$, consider the martingale 
$$
M_n=(S_{2,n}-\bar x_2)^2-\frac{n}{2}. 
$$
Note that $M_0=0$, so we have 
$$
E_{\bar x}[\tau_{\partial B_1(\bar x,n_{\bar x})}]\le \sup_{y\in \partial B_1(\bar x,n_{\bar x})} (y_2-\bar x_2)^2\le \bar x_2^2.
$$
Thus 
\beq
\label{lower_finite_3}
P_{\bar x}(\tau_{\partial B_1(\bar x,n_{\bar x})}\ge x_2^3)\le \frac{1}{x_2}. 
\eeq
On the other hand, for simple random walk in $\ZZ^2$ it was shown in \cite{excursion2} and \cite{excursion22} that for sufficiently large $x_2$, 
\beq
\label{excursion simple 2}
\begin{aligned}
P_{\bar x}(\tau_{\bar x}>\bar x_2^3)&=\frac{\pi}{\log(\bar x^3_2)}+O\left(\frac{1}{\log^2(\bar x^3_2)}\right)\ge \frac{\pi}{6\log(\bar x_2)}.
\end{aligned}
\eeq
Thus note that 
$$
\begin{aligned}
P_{\bar x}(\tau_{\partial B_1(\bar x,n_{\bar x})}<\tau_{\bar x})&\ge P_{\bar x}(\tau_{\partial B_1(\bar x,n_{\bar x})}<\bar x_2^3, \ \tau_{\bar x}>\bar x_2^3)\\
&=P_{\bar x}(\tau_{\bar x}>\bar x_2^3)-P_{\bar x}(\tau_{\partial B_1(\bar x,n_{\bar x})}\ge \bar x_2^3, \ \tau_{\bar x}>\bar x_2^3)\\
&\ge P_{\bar x}(\tau_{\bar x}>\bar x_2^3)-P_{\bar x}(\tau_{\partial B_1(\bar x,n_{\bar x})}\ge \bar x_2^3). 
\end{aligned}
$$
Combining \eqref{lower_finite_3} and \eqref{excursion simple 2}, we have for sufficiently large $x_2$,
$$
P_{\bar x}(\tau_{\partial B_1(\bar x,n_{\bar x})}<\tau_{\bar x})\ge \frac{\pi}{6\log(\bar x_2)}-\frac{1}{x_2}\ge \frac{\pi}{7\log(\bar x_2)}
$$
which finished the proof of this lemma. 
\end{proof}
With Lemma \ref{lemma:lower_finite}, the proof of  \eqref{uniform lower bound_finite_total} and thus Theorem \ref{theorem: theorem: uniform_2} is complete. \qed

\subsection{Proof of Theorem \ref{theorem: theorem: maximum_2}}

Now we show that the total harmonic measure is maximized (up to multiplying a constant) by the vertical line segment $V_n$ over all connected finite subsets with the same cardinality and connected to $L_0$. And again we do this inductively. By \eqref{inductive_H_3}, we have
\beq
\label{maximum_finite}
H_{V_{n},N}-H_{V_{n-1},N}=H_{V_{n},N}(y_{n}) P_{y_{n}}(\tau_{L_0}<\tau_{V_{n-1}}). 
\eeq
According to Theorem \ref{theorem: maximum_path}, we have 
$$
H_{V_{n},N}(y_{n})\ge x\sqrt{n}.
$$
Noting that 
$$
P_{y_{n}}(\tau_{L_0}<\tau_{V_{n-1}})\ge P_{y_{n}}(\tau_{L_0}<\tau_{V_{n}}),
$$
it suffices to prove that 
\beq
\label{maximum_finite_2}
P_{y_{n}}(\tau_{L_0}<\tau_{V_{n}})\ge \frac{c}{\sqrt{n}}. 
\eeq
On the other hand, recall that 
$$
\mathcal{S}_{1,n}=\partial B_1(y_n,[n/3])
$$
and that 
$$
\mathcal{S}^U_{1,n}=\mathcal{S}_{1,n}\cap\{(x,y), y\ge n\}. 
$$
We have 
$$
P_{y_{n}}(\tau_{L_0}<\tau_{V_{n}})\ge P_{y_{n}}\Big(\tau_{\mathcal{S}^U_{1,n}}<\tau_{V_{n}}\Big) \inf_{y\in \mathcal{S}^U_{1,n}}P_y(\tau_{L_0}<\tau_{V_{n}}).
$$
Again by invariant principle, there is a constant $c>0$ such that for any $n$ and $y\in \mathcal{S}^U_{1,n}$, 
$$
P_y(\tau_{L_0}<\tau_{V_{n}})\ge c. 
$$
And then by \eqref{escaping probability} and \eqref{Spatial_1}, 
$$
P_{y_{n}}\Big(\tau_{\mathcal{S}^U_{1,n}}<\tau_{V_{n}}\Big) \ge \frac{c}{\sqrt{n}}.
$$
Thus the proof of Theorem \ref{theorem: theorem: maximum_2} is complete. \qed

\section{Construction and growth estimate of DLA in $\mathcal{H}$}
\label{section: particle system}

\subsection{Construction of a growth model}

With the upper bounds of the harmonic measure on the upper half plane, in this section we construct pure growth models which can be used as a dominating process for both the DLA model in $\mathcal{H}$ and the stationary DLA model that will be introduced in a follow up paper. Consider an interacting particle system $\bar \xi_t$ defined on $\{0,1\}^\mathcal{H}$ where $\mathcal{H}$ is the upper half plane with 1 standing for a site occupied while 0 for vacant, with transition rates as follows: 
\begin{enumerate}[(i)]
\item For each occupied site $x=(x_1,x_2)\in \mathcal{H}$, if $x_2>0$ it will try to give birth to each of its nearest neighbors at a Poisson rate of $\sqrt{x_2}$. If $x_2=0$, it will try to give birth to each of its nearest neighbors at a Poisson rate of $1$. 
\item When $x$ attempts to give birth to its nearest neighbors $y$ already occupied, the birth is suppressed.   
\end{enumerate}
We prove that an interacting particle system determined by the dynamic above is well-defined. 
\begin{Proposition}
\label{lemma_well_definition_1}
The interacting particle system $\bar \xi_t\in \{0,1\}^\mathcal{H}$ satisfying (i) and (ii) is well defined.  
\end{Proposition}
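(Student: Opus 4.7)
The plan is to build $\bar\xi_t$ by a Harris graphical construction and then rule out explosions using a first--moment path--counting estimate. The essential point is that while per--site rates are unbounded, they grow only like $\sqrt{x_2}$, slow enough to be tamed by the factorial suppression coming from the ordering constraint on the Poisson marks.

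For each directed nearest--neighbor edge $\vec e = (x,y)$ in $\mathcal{H}$, introduce an independent Poisson process $\mathcal P^{\vec e}$ of rate $\sqrt{x_2}$ if $x_2 \ge 1$ and of rate $1$ if $x_2 = 0$. Given the initial configuration $\bar\xi_0$, declare $z \in \bar\xi_t$ iff either $z \in \bar\xi_0$, or there is a birth chain from $\bar\xi_0$ to $z$ by time $t$, meaning a nearest--neighbor path $w_0, w_1, \ldots, w_k = z$ with $w_0 \in \bar\xi_0$ together with times $0 < t_1 < t_2 < \cdots < t_k \le t$ such that $t_i$ is a mark of $\mathcal P^{(w_{i-1}, w_i)}$. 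Because the dynamics are pure growth (hence monotone) this definition matches the stated transition rules: the $i$-th mark attempts a birth from $w_{i-1}$ to $w_i$, which is permitted because by time $t_i$ the previous marks of the chain have already occupied $w_{i-1}$, and if $w_i$ was empty at $t_i^-$ the occupation propagates.

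To verify that this prescription yields a c\`adl\`ag process on $\{0,1\}^{\mathcal{H}}$, it suffices to show that for every site $z$ and every $T>0$ the number $N(z,T)$ of birth chains ending at $z$ that are activated by time $T$ is almost surely finite. For a fixed path $\gamma=(w_0,\ldots,w_k)$ with edge rates $r_1,\ldots,r_k$, the order--statistics formula for independent Poisson processes gives
\[
P(\gamma \text{ activated by time } T) \;\le\; E\!\left[\#\big\{0<t_1<\cdots<t_k\le T : t_i \in \mathcal P^{(w_{i-1},w_i)}\big\}\right] = \frac{T^k}{k!}\prod_{i=1}^k r_i.
\]
Every edge of a path of length $k$ ending at $z$ starts at a vertex of height at most $z_2+k$, so $r_i \le \sqrt{z_2+k}$, and there are at most $4^k$ such paths, so by Stirling
\[
E[N(z,T)] \;\le\; \sum_{k=1}^\infty 4^k\,\frac{T^k(z_2+k)^{k/2}}{k!} \;\le\; \sum_{k=1}^\infty\!\left(\frac{4eT\sqrt{z_2+k}}{k}\right)^{\!k} \;<\; \infty,
\]
since the summand decays like $(4eT/\sqrt k)^k$ for large $k$. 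Hence $N(z,T)<\infty$ a.s.\ and $\bar\xi_T(z)$ is determined by finitely many marks; intersecting null sets over $z$ and $T$ in a countable dense family yields a process well defined on $[0,\infty)$.

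The main obstacle is precisely the unbounded per--site rate: the standard existence results for interacting particle systems require locally bounded rates, which fails here. The workaround is to step away from the continuous--time viewpoint and instead count activated birth chains directly; the slow growth $\sqrt{x_2}$ is overwhelmed by the combinatorial $k!$ from ordering the marks, making the first--moment sum finite for all $T$ and all $z$.
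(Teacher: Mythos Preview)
Your argument is correct and in fact cleaner than the paper's. Both proofs use the same Harris graphical construction and reduce well-definedness to the statement that only finitely many open paths reach a given site by a given time. The difference is in how that finiteness is obtained. The paper bounds the probability that a length-$n$ path is open in a time window of width $t$ by $t^n\prod_i r_i$, ignoring the ordering constraint on the marks; since this is useless for fixed $T$, the proof then iterates over a carefully tuned sequence of short windows $\delta_i$ with radii $N_i\sim (x^{(2),i})^\gamma$, showing that $\sum\delta_i=\infty$ while the cumulative radius stays finite. You instead exploit the ordering constraint directly via the factorial-moment identity $E[\#\{t_1<\cdots<t_k\}]=\frac{T^k}{k!}\prod r_i$, so the extra $k!$ kills the $(z_2+k)^{k/2}$ growth in one shot and no iteration is needed.

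What your approach buys is brevity and conceptual clarity: the whole proof fits in a paragraph. What the paper's approach buys is quantitative control: the iterative scheme produces explicit sequences $t_n$ and $R_n$ with $\text{Rad}_{t_n}\le R_n$ on an event of probability $\ge 1-2\epsilon$, and these estimates are recycled verbatim in the proofs of Theorems~\ref{theorem: DLA_1} and~\ref{theorem: moment} to obtain the growth bound $\|A_t\|=o(t^{2+\epsilon})$ and the moment bounds. Your first-moment estimate alone does not give those rates, so if you were writing the full paper you would still need something like the paper's construction later on; but for the bare statement of Proposition~\ref{lemma_well_definition_1}, your route is sufficient and more elegant.
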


\begin{proof}
The proof of Proposition \ref{lemma_well_definition_1} uses a similar idea as in Theorem 2.1 of \cite{ten_lectures}. Although here the transition rates are no longer translation invariant or uniformly bounded, we will be able to use more elaborate argument and show that with high probability the time moving forward at each step goes to 0 while still being un-summable all together. The next idea is very similar to Borel-Cantelli lemma. However, rather than using the result directly, we will have the proof of Borel-Cantelli lemma embedded in our argument. By doing so, we will be able to make sure the space-time box in each step of our iteration is deterministic and can be explicitly calculated. 

Our construction starts with introducing the following families of independent Poisson processes: for all $x=(x_1,x_2)$ and $y=(y_1,y_2)$ that are nearest neighbors in $\mathcal{H}$ and $e_{x\to y}$ which is the oriented edge from $x$ to $y$, let 
$$
\big\{N_t^{x\to y},\ x,y\in \mathcal{H},\ \|x-y\|=1\big\}
$$ 
be a family of independent Poisson processes, where $N_t^{x\to y}$ has intensity $\sqrt{x_2}\vee 1$. Then let 
$$
\big\{\tilde N_t^{x\to y},\ x,y\in \mathcal{H}, \ \|x-y\|=1\big\}
$$ 
be a family of independent Poisson process independent to $N_t$ with the same intensities. Now consider the space-time combination, $\mathcal{H}\times (-\infty,\infty)$. From each $x\in \mathcal{H}$, we draw a vertical infinite line to represent the double infinite time line at this site. Then for each $e_{x\to y}$, at any time $t$ such that $N_t^{x\to y}=N_{t-}^{x\to y}+1$, we draw an oriented arrow from $(x,t)$ to $(y,t)$. And at $t$ such that $\tilde N_t^{x\to y}=\tilde N_{t-}^{x\to y}+1$, we draw an oriented arrow from $(x,-t)$ to $(y,-t)$.
\begin{remark}
Although the construction of our particle system actually only depend on the transitions on the positive time line, by defining the transition for negative $t$'s we are able to have better symmetry on the time reversal and thus formally simplify the proof. 
\end{remark}
We have an oriented random graph in the space-time combination. Then for any two points $(x,t_1)$ and $(x',t_2)$ with $t_1<t_2$, we define that $(x,t_1)$ and $(x',t_2)$ are connected or $(x,t_1)\to (x',t_2)$, if there is a (finite) path in the oriented random graph starting from $(x,t_1)$, that goes up vertically and follows the oriented edges ending at $(x',t_2)$. Then 
\begin{definition}
\label{definition_upper_bound_system}
For any $\bar\xi_0\in \{0,1\}^\mathcal{H}$, we define $\bar \xi_t$ such that for each $t\ge 0$ and $x\in \mathcal{H}$, $\bar \xi_t(x)=1$ if and only if there is a $x'$ such that $\bar\xi_0(x')=1$ and $(x',0)\to(x,t)$.     
\end{definition}

Once we prove that $\bar\xi_t$ is well defined, one can check that the conditions (i), (ii) are statisfied.
And to show that $\bar \xi_t$ is well defined, it suffices to prove that in our  oriented random graph, with probability one $(x,t)$ can be connected to only finitely many points $(x',0)$ so one can determine explicitly whether any of them is occupied in the initial configuration. To be precise, for any $x\in \mathcal{H}$ and any $t, T\ge 0$, define subset
\beq
\label{range_of_ancestors}
R_{t,T}(x)=\{y\in \mathcal{H}, \ \text{s.t. } (y,T-t)\to (x,T) \}
\eeq
be the set of all possible ancestors of $\bar \xi_T(x)$ at time $T-t$, and we will write $R_{T}(x)$ in short of $R_{T,T}(x)$. According to the definition, it is easy to see that 
\beq
R_{t_1,T}(x)\subset R_{t_2,T}(x)
\eeq
for all $0\le t_1\le t_2$ and $T\ge 0$, and that 
\beq
\label{monotone_1}
R_{T_1}(x)\subset R_{T_2}(x)
\eeq
for any $0\le T_1\le T_2$. Thus, to show that Definition \ref{definition_upper_bound_system} is self-consistent, we only need to prove that 
\begin{lemma}
\label{lemma_well_definition_2}
With probability one we have $R_{T}(x)$ is finite for any $T>0$ and $x\in \mathcal{H}$.  
\end{lemma}
\begin{proof}
Let
$$
\text{Rad}_{t,T}(x)=\sup_{y\in R_{t,T}(x)}|x-y|
$$
be the radius of $R_{t,T}(x)$ and $\text{Rad}_{T}(x)=\text{Rad}_{T,T}(x)$. By \eqref{monotone_1}, it is sufficient to prove that for each given $T>0$ and $x\in \mathcal{H}$ we have 
\beq
\text{Rad}_{T}(x)<\infty
\eeq
almost surely. Then, we can take all rational numbers of $T$'s and all $x\in \mathcal{H}$ which are both countable to get the lemma. Moreover, note that to show $P(\text{Rad}_{T}(x)<\infty)=1$, it suffices to prove that for any $\ep>0$,
\beq
P(\text{Rad}_{T}(x)<\infty)>1-\ep. 
\eeq
For any given $T$ and $t\ge 0$ and $x=(x^{(1)},x^{(2)})\in \mathcal{H}$, note that $R_{t,T}(x)$ is the collection of all $x'$ such that $(x',T-t)$ is connected to $(x,T)$. And for $(x',T-t)$ and $(x,T)$ to be connected, there must be a path between them, i.e., there must be a sequence of times $T-t\le t_1<t_2<\cdots<t_n\le T$ and $x'=x_0, x_1,x_2,\cdots, x_n=x$ which is a nearest neighbor sequence in $\mathcal{H}$ such that 
$$
N^{x_{i-1}\to x_{i}}_{t_i}=N^{x_{i-1}\to x_{i}}_{t_i-}+1
$$
if $t_i\ge0$, or
$$
\tilde N^{x_{i-1}\to x_{i}}_{-t_i}=\tilde N^{x_{i-1}\to x_{i}}_{-t_i-}+1
$$
if $t_i<0$, for all $i=1,2,\cdots, n$. Thus it is easy to see that for any nearest neighbor  path $x_0, x_1,x_2,\cdots, x_n=x$ in $\mathcal{H}$, it is open between $T-t$ and $T$ in our oriented random graph only if there is  at least one transition at each edge along the path during this time interval.  Thus we have
\beq
\label{probability_range_0}
\begin{aligned}
P&(\text{Rad}_{t,T}(x)\ge n)\\
&\le P(\exists \text{ an open path in $[T-t,T]$ ending at $x$ with length $n$})\\
&\le \sum_{x_0, x_1,x_2,\cdots, x_n\in \mathcal{P}_{x,n}} \prod_{i=1}^n P(N^{x_{i-1}\to x_{i}}_T-N^{x_{i-1}\to x_{i}}_{T-t}\ge 1)\\
&= \sum_{x_0, x_1,x_2,\cdots, x_n\in \mathcal{P}_{x,n}} \prod_{i=1}^n \left[1- e^{-t\sqrt{x^{(2)}_{i-1}}}\right]\\
&\le t^n\sum_{x_0, x_1,x_2,\cdots, x_n\in \mathcal{P}_{x,n}} \sqrt{\prod_{i=1}^n x^{(2)}_{i-1}}
\end{aligned}
\eeq
where $\mathcal{P}_{x,n}$ is the collection of all nearest neighbor paths in $\mathcal{H}$ of length $n$ ending at $x$, and $x^{(2)}_{i-1}$ stands for the $y-$coordinate of $x_{i-1}$. 
\begin{remark}
Without loss of generality, the inequalities above is written for $0\le t\le T$. By symmetry the same hold for $t>0$ and $T<0$. Note that even when $T>0$ and $T-t<0$, the total number of transitions of $N^{x_{i-1}\to x_{i}}_s$ in $s\in[0,T]$ plus the  total number of transitions of $\tilde N^{x_{i-1}\to x_{i}}_s$ in $s\in[0,t-T]$ is again a Poisson random variable with intensity $t\sqrt{x^{(2)}_{i-1}}$. So \eqref{probability_range_0} still holds. 
\end{remark}
Then note that $|\mathcal{P}_{x,n}|\le 4^n$ and that for each $x_0, x_1,x_2,\cdots, x_n=x\in \mathcal{P}_{x,n}$, we have 
$$
x^{(2)}_{n-i}\le x^{(2)}+i, \ i=0,1,2,\cdots, n. 
$$
Thus, we have 
$$
 \sqrt{\prod_{i=1}^n x^{(2)}_{i-1}}\le  \sqrt{\prod_{i=1}^n (x^{(2)}+i)}
$$
which implies that 
\beq
\label{probability_range}
P(\text{Rad}_{t,T}(x)\ge n)\le (4t)^n \sqrt{\prod_{i=1}^n (x^{(2)}+i)}.
\eeq
Now for each $\gamma\in (0,1/2)$, define 
$$
M_\gamma=\sum_{k=0}^\infty k^{2/(1-\gamma)} 2^{-k^{\gamma/(1-\gamma)}}<\infty.
$$
Now for any $\ep>0$ let 
$$
N_1=\left\lfloor\frac{4 \big(x^{(2)}\big)^\gamma }{1-\gamma}\right\rfloor
$$ 
and 
$$
\delta_1=t_1=\frac{\ep}{64M_\gamma \sqrt{x^{(2)}+N_1}}.
$$
By \eqref{probability_range}, we have
\beq
\label{probability_range_1}
P(\text{Rad}_{t_1,T}(x)\ge N_1)\le  \frac{\ep}{M_\gamma} 2^{-N_1}\le   \frac{\ep}{M_\gamma} k_1^{2/(1-\gamma)} 2^{-k_1^{\gamma/(1-\gamma)}}
\eeq
where
\beq
\label{probability_range_2}
k_1=\lfloor\big(x^{(2)}\big)^{1-\gamma}\rfloor.
\eeq
The last inequality in \eqref{probability_range_1} is a result that
$$
N_1=\left\lfloor\frac{4 \big(x^{(2)}\big)^\gamma }{1-\gamma}\right\rfloor\ge \big(x^{(2)}\big)^\gamma\ge k_1^{\gamma/(1-\gamma)}. 
$$
Then under event 
$$
E_1=A_1=\{\text{Rad}_{t_1,T}(x)< N_1\},
$$
we define 
$$
x^{(2),2}=x^{(2)}+N_1,
$$
$$
N_2=\left\lfloor\frac{4 \big(x^{(2),2}\big)^\gamma }{1-\gamma}\right\rfloor,
$$
$$
\delta_2=\frac{\ep}{64M_\gamma \sqrt{x^{(2),2}+N_2}},
$$
and
$$
t_2=t_1+\delta_2.
$$
Then define event 
$$
A_2=\bigcap_{y\in B(x,x^{(2),2}-x^{(2)}-1)}\{\text{Rad}_{\delta_2,T-t_1}(y)< N_2\}.
$$ 
One can first see that by the same calculation as in \eqref{probability_range_1}
\beq
\label{probability_range_3}
\begin{aligned}
P(A_2)&\ge 1-\sum_{y\in B(x,x^{(2),2}-x^{(2)}-1)} P(\text{Rad}_{\delta_2,T-t_1}(y)\ge N_2)\\
&\ge 1-\big(x^{(2),2}\big)^{2}\frac{\ep}{16M_\gamma}2^{-k_2^{\gamma/(1-\gamma)}}
\end{aligned}
\eeq
where
$$
k_2=\lfloor\big(x^{(2),2}\big)^{1-\gamma}\rfloor.
$$
Moreover, we have 
$$
\big(x^{(2),2}\big)^{2}=\left[(x^{(2),2}\big)^{1-\gamma} \right]^{2/(1-\gamma)}
$$
while
$$
(x^{(2),2}\big)^{1-\gamma} \le 2k_2.
$$
Thus
\beq
\label{probability_range_3}
\begin{aligned}
P(A_2)&\ge1-\big(2k_2\big)^{2/(1-\gamma)}\frac{\ep}{16M_\gamma}2^{-k_2^{\gamma/(1-\gamma)}}\\
&\ge 1-\frac{\ep}{M_\gamma}k_2^{2/(1-\gamma)}2^{-k_2^{\gamma/(1-\gamma)}}.
\end{aligned}
\eeq
Then note that for any $x\ge 1$, we have by calculus 
$$
\big(x^{1-\gamma}+4\big)^{1/(1-\gamma)}> x+\frac{4}{1-\gamma} x^\gamma
$$
while 
$$
\begin{aligned}
\big(x^{1-\gamma}+1\big)^{1/(1-\gamma)}&\le x+\frac{1}{1-\gamma} \big(x^{1-\gamma}+1\big)^{\gamma/(1-\gamma)}\\
&\le x+\frac{1}{1-\gamma} \big(2x^{1-\gamma}\big)^{\gamma/(1-\gamma)}\\
&< x+\frac{2}{1-\gamma} x^\gamma\\
&< x+\left\lfloor\frac{4 x^\gamma }{1-\gamma}\right\rfloor.
\end{aligned}
$$
We have that 
\beq
\label{probability_range_4}
\big(x^{(2),2}\big)^{1-\gamma}=\big(x^{(2)}+N_1\big)^{1-\gamma}\in \Big(\big(x^{(2)}\big)^{1-\gamma}+1,\big(x^{(2)}\big)^{1-\gamma}+4 \Big)
\eeq
and that
$$
k_2=\lfloor\big(x^{(2),2}\big)^{1-\gamma}\rfloor\in [k_1+1,k_1+4). 
$$
Using exactly the same argument on 
$$
x^{(2),3}=x^{(2),2}+N_2,
$$
and 
$$
k_3=\lfloor\big(x^{(2),3}\big)^{1-\gamma}\rfloor,
$$
we have 
$$
k_3\in [k_2+1,k_2+4). 
$$
Then we note that event $A_1$ depends only on the transitions within $B(x,N_1)\times [T-t_1,T]$, while event $A_2$ depends only on the transitions within $B(x,N_1+N_2)\times [T-t_2,T-t_1]$. By the independence of increment in a Poisson processes, we have that $A_1$ independent to $A_2$, and thus for $E_2=A_1\cap A_2$,
$$
P(E_2)=P(A_1)P(A_2)\ge \left( 1-\frac{\ep}{M_\gamma}k_1^{2/(1-\gamma)}2^{-k_1^{\gamma/(1-\gamma)}}\right)\cdot\left( 1-\frac{\ep}{M_\gamma}k_2^{2/(1-\gamma)}2^{-k_2^{\gamma/(1-\gamma)}}\right).
$$
Finally, recalling the definition of $\text{Rad}_{t_1,T}$, one can immediately have under $E_2$
$$
\text{Rad}_{t_2,T}(x)<x^{(2),3}-x^{(2)}<\infty. 
$$
Repeat such iteration, i.e., for all $n\ge 2$ let 
$$
x^{(2),n}=x^{(2),n-1}+N_{n-1},
$$
$$
N_n=\left\lfloor\frac{4 \big(x^{(2),n}\big)^\gamma }{1-\gamma}\right\rfloor,
$$
$$
\delta_n=\frac{\ep}{64M_\gamma \sqrt{x^{(2),n}+N_n}},
$$
$$
t_n=t_{n-1}+\delta_n,
$$
$$
A_n=\bigcap_{y\in B(x,x^{(2),n}-x^{(2)}-1)}\{\text{Rad}_{\delta_n,T-t_{n-1}}(y)< N_n\},
$$ 
and
$$
E_n=E_{n-1}\cap A_n.
$$
Consider 
$$
E_\infty=\bigcap_{n=1}^\infty A_n.
$$
Under $E_\infty$ we have for any $n\ge1$, 
\beq
\label{probability_range_5}
\text{Rad}_{t_n,T}(x)<x^{(2),n+1}-x^{(2)}<\infty. 
\eeq
At the same time
\beq
\label{probability_range_6}
\begin{aligned}
t_n&=\sum_{i=1}^n\delta_i=\sum_{i=1}^n \frac{\ep}{64M_\gamma \sqrt{x^{(2),i}+N_i}}=\frac{\ep}{64M_\gamma}\sum_{i=1}^n \frac{1}{ \sqrt{x^{(2),i+1}}}.
\end{aligned}
\eeq
Moreover, by \eqref{probability_range_4} we have for each $i$
\beq
\label{probability_range_7}
\big(x^{(2),i}\big)^{1-\gamma}=\big(x^{(2),i-1}+N_{i-1}\big)^{1-\gamma}\in \big(\big(x^{(2),i-1}\big)^{1-\gamma}+1,\big(x^{(2),i-1}\big)^{1-\gamma}+4 \big)
\eeq
which together implies that 
\beq
\label{probability_range_8}
\big(x^{(2),i}\big)^{1-\gamma}\le \big(x^{(2)}\big)^{1-\gamma}+4i. 
\eeq
Combining \eqref{probability_range_6}-\eqref{probability_range_8} we have 
\beq
\label{probability_range_9}
\begin{aligned}
t_n\ge\frac{\ep}{64M_\gamma}\sum_{i=1}^n \left[\big(x^{(2)}\big)^{1-\gamma}+4i\right]^{-1/(2-2\gamma)}.
\end{aligned}
\eeq
Recalling that $\gamma\in (0,1/2)$, $1/(2-2\gamma)<1$, which implies that the series in \eqref{probability_range_9} is divergent. So for any $T\ge 0$ there is a $n(T,\gamma,\ep)<\infty$ such that for all $n\ge n(T,\gamma,\ep)$, $t_n\ge T$, 
$$
\text{Rad}_{T}(x)\subset\text{Rad}_{t_n,T}(x).
$$
Thus we have under event $E_\infty$, $\text{Rad}_{T}(x)<\infty$. On the other hand, Noting that by the independence increment of Poisson processes, we have $A_1, A_2,\cdots$ gives a sequence of independent events, and that according to our iteration for each $i$
\beq
\label{probability_range_10}
\begin{aligned}
P(A_i)\ge 1-\frac{\ep}{M_\gamma}k_i^{2/(1-\gamma)}2^{-k_i^{\gamma/(1-\gamma)}}>1-\ep
\end{aligned}
\eeq
with 
\beq
\label{probability_range_11}
k_i\in [k_{i-1}+1,k_{i-1}+4). 
\eeq
Thus for sufficiently small $\ep$ such that for all $x\in (0,\ep)$, $\log(1-x)\ge -2x$ and any $n\ge 1$, we have
$$
P(E_n)-1\ge\log\big( P(E_n)\big)=\log\left( \prod_{i=1}^n P(A_i)\right)\ge 2\sum_{i=1}^n[P(A_i)-1]. 
$$
By \eqref{probability_range_10}, 
$$
P(E_n)-1\ge -\frac{2\ep}{M_\gamma}\sum_{i=1}^n k_i^{2/(1-\gamma)}2^{-k_i^{\gamma/(1-\gamma)}}.
$$
Then noting that by \eqref{probability_range_11} $k_i\ge k_{i-1}+1$ and the fact that all $k_i$'s are integers by definition, we have
$$
\sum_{i=1}^n k_i^{2/(1-\gamma)}2^{-k_i^{\gamma/(1-\gamma)}}\le M_\gamma
$$
and thus
\beq
\label{probability_range_12}
P(E_n)\ge 1-2\ep. 
\eeq
Note that the right hand side of \eqref{probability_range_12} is independent of $n$. We have $P(E_\infty)\ge 1-2\ep$. And since $\ep$ is arbitrarily chosen,  $P(\text{Rad}_T(x)<\infty)=1$ which completes the  proof of Lemma \ref{lemma_well_definition_2}. 
\end{proof}
Thus the proof of Proposition \ref{lemma_well_definition_1} is complete. 
\end{proof}

With the proof of Proposition \ref{lemma_well_definition_1}, one can easily apply the technique of Poisson thinning to define the following particle system where time is slowed down in-homogeneously and define a dominating process for the future stationary DLA model. I.e.,  we can consider the slower interacting particle system $\tilde \xi_t$ defined on $\{0,1\}^\mathcal{H}$ with transition rates as follows: 
\begin{enumerate}[(i)']
\item For each occupied site $x=(x_1,x_2)\in \mathcal{H}$ at time $t\ge 0$, it will try to give birth to each of its nearest neighbors at a Poisson rate of $\frac{\sqrt{x_2}}{\sqrt{t+1}}$. 
\item When $x$ attempts to give birth to its nearest neighbors $y$ already occupied, the birth is suppressed.   
\end{enumerate}
For $\tilde \xi_t$ we have 
\begin{corollary}
\label{lemma_well_definition_3}
The interacting particle system $\tilde \xi_t\in \{0,1\}^\mathcal{H}$ satisfying (i)' and (ii)' is well defined.  
\end{corollary}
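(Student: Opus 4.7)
The plan is to deduce Corollary \ref{lemma_well_definition_3} from Proposition \ref{lemma_well_definition_1} by a direct Poisson thinning coupling, rather than repeating the iterative argument. The key observation is that the rate $\frac{\sqrt{x_2}}{\sqrt{t+1}}$ of $\tilde\xi_t$ is pointwise bounded above by the rate $\sqrt{x_2}\vee 1$ of $\bar\xi_t$ for every $t\ge 0$, so any graphical representation of $\tilde\xi_t$ can be realized as a sub-graph of that of $\bar\xi_t$.

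Concretely, I would start from the same family of independent homogeneous Poisson processes $\{N_t^{x\to y}\}$ used in the proof of Proposition \ref{lemma_well_definition_1}. For each oriented edge $e_{x\to y}$ with $x=(x_1,x_2)$ and each arrival time $s\ge 0$ of $N^{x\to y}$, independently of everything else retain that arrival with probability $p(s)=\frac{1}{\sqrt{s+1}}$ when $x_2\ge 1$, and discard every arrival when $x_2=0$. By the standard thinning theorem, the retained arrivals along $e_{x\to y}$ form an inhomogeneous Poisson process on $[0,\infty)$ with intensity $\frac{\sqrt{x_2}}{\sqrt{s+1}}$, which is exactly the rate prescribed in (i)'. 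Moreover, distinct edges are thinned independently, so the retained processes $\{\tilde M_t^{x\to y}\}$ constitute an independent family with the correct intensities.

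Using these thinned processes, I build the oriented space-time random graph for $\tilde\xi_t$ exactly as in Definition \ref{definition_upper_bound_system}. Because every retained arrival is, by construction, also an arrival of the underlying $N^{x\to y}$, each oriented arrow in the thinned graph is an oriented arrow in the graph of $\bar\xi_t$. Consequently every open path for $\tilde\xi_t$ is an open path for $\bar\xi_t$, and so the ancestor set $\tilde R_T(x)$ for $\tilde\xi_t$ satisfies
\beq
\tilde R_T(x)\subset R_T(x)
\eeq
almost surely, for every $x\in\mathcal{H}$ and $T\ge 0$. Lemma \ref{lemma_well_definition_2} gives $|R_T(x)|<\infty$ almost surely, hence $|\tilde R_T(x)|<\infty$ almost surely as well. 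The analogue of Definition \ref{definition_upper_bound_system} therefore unambiguously specifies $\tilde\xi_t(x)$ for every $(x,t)$ from any initial configuration in $\{0,1\}^{\mathcal{H}}$, which is exactly the content of Corollary \ref{lemma_well_definition_3}.

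No single step here is genuinely hard: the thinning identification is routine and the domination of ancestor sets is immediate from the coupling. The only point that requires mild care is that the thinning probability $p(s)$ is time-dependent, so one has to invoke thinning for inhomogeneous (rather than homogeneous) targets and check that independence across edges is preserved; both follow from independent thinning of independent Poisson point processes on $[0,\infty)$.
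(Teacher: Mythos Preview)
Your proposal is correct and follows essentially the same approach as the paper: both construct $\tilde\xi_t$ by Poisson thinning of the graphical representation of $\bar\xi_t$ (the paper uses auxiliary i.i.d.\ uniforms $U^{x\to y}_n$ and keeps an arrow at time $t$ iff $U^{x\to y}_n<1/\sqrt{t+1}$, which is exactly your independent retention), observe that the resulting oriented space-time graph is a subgraph of that for $\bar\xi_t$, and then invoke Proposition~\ref{lemma_well_definition_1}/Lemma~\ref{lemma_well_definition_2} to conclude well-definedness. Your treatment is slightly more explicit about the $x_2=0$ case and the identification of the thinned intensity, but the argument is the same.
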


\begin{proof}
We construct $\tilde \xi_t$ with the same families of Poisson processes. Recall that in the proof of Proposition \ref{lemma_well_definition_1}, for all $x=(x_1,x_2)$ and $y=(y_1,y_2)$ in $\mathcal{H}$ with $|x-y|=1$ and $e_{x\to y}$ which is the the oriented edge from $x$ to $y$, we have 
$$
\big\{N_t^{x\to y},\ x,y\in \mathcal{H}, \ |x-y|=1\big\}
$$ 
be a family of independent Poisson process with intensity of $N_t^{x\to y}$ equals to $\sqrt{x_2}$. Moreover, for each $e_{x\to y}$, we define $\{U^{x\to y}_n\}_{n=1}^\infty$ be a i.i.d. sequence of random variables uniform on $[0,1]$. And we let the sequences for different edges independent to each other and also independent to the Poisson processes previously defined. 

Now consider the space-time combination, $\mathcal{H}\times [0,\infty)$. From each $x\in \mathcal{H}$, we draw a vertical infinite half line to represent the time line at this site. Then for each $e_{x\to y}$, at any time $t$ such that $N_t^{x\to y}=n=N_{t-}^{x\to y}+1$, we draw an oriented arrow from $(x,t)$ to $(y,t)$ if $U^{x\to y}_n<1/\sqrt{t+1}$. Thus we have another oriented random graph in the space-time combination which is a subset of the one we have for $\bar \xi_t$. By Proposition \ref{lemma_well_definition_1} we can see the following particle system is well defined. 
\begin{definition}
\label{definition_upper_bound_system_2}
For any $\tilde\xi_0\in \{0,1\}^\mathcal{H}$, we define $\tilde \xi_t$ such that for each $t\ge 0$ and $x\in \mathcal{H}$, $\tilde \xi_t(x)=1$ if and only if there is a $x'$ such that $\bar\xi_0(x')=1$ and $(x',0)$ is connected to $(x,t)$ in the new smaller oriented random graph.     
\end{definition}
\end{proof}

\subsection{Proof of Theorem \ref{theorem: DLA_1}}
 
By Theorem \ref{theorem: uniform_path} we have seen that for any $B$, $x\in B\setminus L_0$ and any $\vec e=x\to y$ with $\|x-y\|=1$,
$$
H_{B}(\vec e)\le H_{B}(x)\le C\sqrt{x_2}
$$
for some $C>1$. Moreover, by \eqref{weak bound 2}, if $x_2=0$, 
$$
H_{B}(\vec e)\le H_{B}(x)\le 1. 
$$
We construct our DLA model on $\mathcal{H}$ as follows: First, recall that 
$$
\big\{N_t^{x\to y},\ x,y\in \mathcal{H}, \ \|x-y\|=1\big\}
$$ 
is a family of independent Poisson processes, such that the intensity of $N_t^{x\to y}$ equals to $\sqrt{x_2}$ and that $\{U^{x\to y}_n\}_{n=1}^\infty$ is an i.i.d. sequence of random variables uniform on $[0,1]$ independent to the Poisson processes. Let $\bar A_0=\{(0,0)\}$, and for any $t>0$,
\begin{itemize}
\item If there is an $\vec e=x\to y$ such that $x\in \bar A_{t-}$ and $y\notin \bar A_{t-}$, where $N_t^{x\to y}=n$ and $N_{t-}^{x\to y}=n-1$, we let $\bar A_t=\bar A_{t-}\cup\{y\}$ if 
$$
U^{x\to y}_n\le \frac{H_{\bar A_{t-}}(\vec e)}{C \sqrt{x_2}}.
$$
\item Otherwise,  $\bar A_t=\bar A_{t-}$. 
\end{itemize}
To prove Theorem \ref{theorem: DLA_1}, we first need to show
\begin{lemma}
\label{lemma_finite_A_t}
For each time $t$, $\bar A_t$ above is with probability 1 well defined and finite. 
\end{lemma}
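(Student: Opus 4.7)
The plan is to reduce the lemma to two essentially independent facts: well-definedness via a coupling with the dominating particle system $\bar \xi_t$ from Proposition \ref{lemma_well_definition_1}, and finiteness via a Yule-type pure-birth comparison that exploits the total-mass bound in Theorem \ref{theorem: theorem: uniform_2}.

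For well-definedness I would couple $\bar A_t$ with $\bar \xi_t$ started from $\bar \xi_0=\{(0,0)\}$ using exactly the same families $\{N^{x\to y}_t\}$ and $\{U^{x\to y}_n\}$ already introduced. Every accepted birth in $\bar A$ occurs at a firing of some $N^{x\to y}$ and, because the acceptance threshold $H_{\bar A_{t-}}(\vec e)/(C\sqrt{x_2})$ is bounded by $1$, the accepted firings form a subset of those driving $\bar \xi$; an induction in the order of firings then yields $\bar A_t\subseteq \bar \xi_t$ for all $t$. The main delicate point is the self-referential nature of this thinning rule, since the acceptance threshold depends on $\bar A_{t-}$, the object being defined. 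I expect this to be the principal obstacle in a rigorous write-up, and I would resolve it exactly as Proposition \ref{lemma_well_definition_1} is proved: only the finitely many arrows and uniforms inside the a.s.\ finite ancestor region $R_t(x)$ of Lemma \ref{lemma_well_definition_2} can influence whether $x\in \bar A_t$, so they can be processed in time order to yield a self-consistent definition of $\bar A_t$ as a random subset of $\mathcal{H}$.

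For finiteness I would compute the effective rate at which $|\bar A_t|$ increases by one. Conditional on $\bar A_{t-}=A$, each oriented edge $\vec e=x\to y$ with $x\in A$, $y\notin A$ contributes effective rate $(\sqrt{x_2}\vee 1)\cdot H_A(\vec e)/(C(\sqrt{x_2}\vee 1))=H_A(\vec e)/C$, so the aggregate rate is $H_A/C$. Since $(0,0)\in \bar A_t$ for all $t\ge 0$ we have $\min_{x\in A}x_2=0$, and Theorem \ref{theorem: theorem: uniform_2} then gives $H_A\le C|A|$. Hence $|\bar A_t|$ is stochastically dominated by a pure birth process with per-level rate $n$ started at $1$; its $n$-th jump time is a sum of independent exponentials with rates $1,2,\ldots,n-1$, and since $\sum_{k<n}1/k\to\infty$ the standard Feller non-explosion criterion for pure birth chains applies, so $|\bar A_t|<\infty$ for every finite $t$ almost surely.
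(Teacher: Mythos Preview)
Your overall strategy is sound, but there is one technical slip and the route differs from the paper's in a way worth noting.

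For well-definedness you invoke the \emph{backward} ancestor set $R_t(x)$ of Lemma~\ref{lemma_well_definition_2}. That does not quite work for the thinned process: whether an arrow $\vec e$ at time $s$ is accepted depends on $H_{\bar A_{s-}}(\vec e)$, a function of the \emph{entire} current aggregate $\bar A_{s-}$, and $\bar A_{s-}$ can contain points lying outside $R_t(x)$. The region that makes the ``process the finitely many arrows in time order'' argument go through is the \emph{forward} descendant set $I_{0,t}(0)=\{y:(0,0)\to(y,t)\}$, since $\bar A_s\subseteq I_{0,s}(0)\subseteq I_{0,t}(0)$ for all $s\le t$. This is in fact what your coupling $\bar A_t\subseteq\bar\xi_t$ (started from the single point $\{(0,0)\}$) already gives you, because that $\bar\xi_t$ is exactly $I_{0,t}(0)$; but you still need to argue that $I_{0,t}(0)$ is a.s.\ finite, which does not follow from Proposition~\ref{lemma_well_definition_1} as stated (that proposition yields well-definedness via finiteness of the backward sets), though it follows by the identical computation.

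This is precisely what the paper does, and it handles well-definedness and finiteness in one stroke: show $I_{0,t}(0)$ is a.s.\ finite by the argument of Lemma~\ref{lemma_well_definition_2}, then process the finitely many arrows inside $I_{0,t}(0)\times[0,t]$ in time order, and conclude $\bar A_t\subseteq I_{0,t}(0)$ is finite. Your Yule comparison is a correct alternative for non-explosion, but it is strictly more expensive: it requires Theorem~\ref{theorem: theorem: uniform_2}, a substantial harmonic-measure estimate, whereas the paper uses only the graphical construction already built. Moreover the paper's quantitative control of $\mathcal I_{0,t}(0)$ is reused immediately afterward to prove Theorems~\ref{theorem: DLA_1} and~\ref{theorem: moment}, so their route pays dividends downstream that the Yule comparison would not.
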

\begin{proof}
To prove Lemma \ref{lemma_finite_A_t}, we construct an event with probability one such that $\bar A_t$ is well defined and finite under this event. For any $x\in \mathcal{H}$ and any $0\le t<T$, define subset
\beq
\label{range_of_offsprings}
I_{t,T}(x)=\{y\in \mathcal{H}, \ \text{s.t. } (x,t)\to (y,T) \}
\eeq
and let 
$$
\mathcal{I}_{t,T}(x)=\sup_{y\in I_{t,T}(x)}\|x-y\|.
$$
Following exactly the same argument as in Lemma \ref{lemma_well_definition_2}, we have with probability one 
$$
\mathcal{I}_{0,Ct}(0)<\infty. 
$$
Under $\{\mathcal{I}_{0,Ct}(0)<\infty\}$ one can easily put all of the finite Poisson transitions within the space time box $\mathcal{I}_{0,t}(0)\times [0,t]$ in order and construct $\bar A_t$ explicitly over finite steps. Moreover, by definition we can always have $\bar A_t\subset I_{0,t}(0)$ thus $\bar A_t$ is finite. 
\end{proof}

Let $A_t=\bar A_{Ct}$, then it is easy to check $A_t$ has the same dynamic as in Theorem \ref{theorem: DLA_1} while being almost surely well defined and finite at the same time. Now, to finish the proof of Theorem \ref{theorem: DLA_1}, we again follow the argument as in Lemma \ref{lemma_well_definition_2}. 
\begin{remark}
The proof of Theorem \ref{theorem: moment} actually also contains all that is needed here (and more). Thus we will not present the details of basically the same thing for a third time. 
\end{remark}
By \eqref{probability_range_8}, \eqref{probability_range_9} and \eqref{probability_range_12} we have for any $\gamma\in (0,1/2)$ and $\ep>0$, there are constants $0<c_\gamma, C<\infty$, and deterministic sequences 
$$
t_n^{\ep,\gamma}\ge c_\gamma\ep n^{\frac{1-2\gamma}{2-2\gamma}} , \ n=1,2,\cdots
$$
and 
$$
R_n^{\ep,\gamma}\le C n^{\frac{1}{1-\gamma}} , \ n=1,2,\cdots
$$
and event $E_\infty^{\ep,\gamma}$ such that $P(E_\infty^{\ep,\gamma})\ge 1-2\ep$ and that under $E_\infty^{\ep,\gamma}$
\beq
\label{lim_sup_1}
\mathcal{I}_{0,t_n^{\ep,\gamma}}(0)\le R_n^{\ep,\gamma}, \forall \ n=1,2,\cdots.
\eeq
With \eqref{lim_sup_1} we have for any $t\ge t_1$, let 
$$
n_t=\sup\{k: \ t_k\le t\}.
$$
Then under event $E_\infty^{\ep,\gamma}$
$$
\mathcal{I}_{0,t}(0)\le R_{n_t+1}^{\ep,\gamma}\le C (n_t+1)^{\frac{1}{1-\gamma}},
$$
which implies that 
\beq
\label{lim_sup_2}
\mathcal{I}_{0,t}(0)t^{\frac{-2}{1-2\gamma}}\le 4C(c_\gamma\ep)^{\frac{-2}{1-2\gamma}}<\infty.
\eeq
Note that the right hand side of \eqref{lim_sup_2} is independent to the choice of $t$, and that $P(E_\infty^{\ep,\gamma})\ge 1-2\ep$ for all $\ep$. we have for any given $\gamma\in(0,1/2)$, with probability one
\beq
\label{lim_sup_3}
\limsup_{t\to\infty} \|A_t\|t^{\frac{-2}{1-2\gamma}}\le \limsup_{t\to\infty} \mathcal{I}_{0,Ct}(0)(Ct)^{\frac{-2}{1-2\gamma}}<\infty.
\eeq
Finally note that the choice of $\gamma$ is arbitrary and that $2/(1-\gamma)\to2$ as $\gamma\to 0$. The proof of Theorem \ref{theorem: DLA_1} is complete. \qed

\subsection{Proof of Theorem \ref{theorem: moment}}

To prove \eqref{moment}, since we have $A_t=\bar A_{Ct}\subset I_{0,Ct}(0)$, it is sufficient to show for any $t\ge 0$ and integer $m\ge 1$
\beq
\label{moment_larger}
E\left[ \mathcal{I}_{0,t}(0)^m \right]<\infty. 
\eeq
The proof here is similar to the one for Lemma \ref{lemma_well_definition_2}. However, since some more delicate estimates on the upper bounds of probabilities are needed, we still provide a detailed proof for the completeness of this paper. 

Recall \eqref{probability_range_0}, we have for any $t$ and $n$, 
\beq
\label{probability_influence_0}
\begin{aligned}
P&(\mathcal{I}_{0,t}(0)\ge n)\\
&\le P(\exists \text{ an open path in $[0,t]$ starting at $0$ with length $n$})\\
&\le \sum_{x_0, x_1,x_2,\cdots, x_n\in \mathcal{P}_{n,0}} \prod_{i=1}^n P(N^{x_{i-1}\to x_{i}}_T-N^{x_{i-1}\to x_{i}}_{T-t}\ge 1)\\
&= (1-e^{-t})\sum_{x_0, x_1,x_2,\cdots, x_n\in \mathcal{P}_{n,0}} \prod_{i=1}^{n-1} \left[1- e^{-t\sqrt{x^{(2)}_{i}}}\right]\\
&\le t^n\sum_{x_0, x_1,x_2,\cdots, x_n\in \mathcal{P}_{n,0}} \sqrt{\prod_{i=1}^{n} x^{(2)}_{i}}.
\end{aligned}
\eeq
Here we use $\mathcal{P}_{n,0}$ to denote the collection of all nearest neighbor paths starting at 0 with length $n$. Then note that $|\mathcal{P}_{n,0}|\le 4^n$ and that for each $0=x_0, x_1,x_2,\cdots, x_n\in \mathcal{P}_{n,0}$, we have 
$$
x^{(2)}_{i}\le i, \ i=0,1,2,\cdots, n. 
$$
Thus, we have 
$$
 \sqrt{\prod_{i=1}^n x^{(2)}_{i}}\le  \sqrt{n!}
$$
which implies that 
\beq
\label{probability_influence}
P(\mathcal{I}_{0,t}(0)\ge n)\le (4t)^n \sqrt{n!}.
\eeq
Now for each $\gamma\in (0,1/2)$, define 
$$
M_\gamma=\sum_{k=0}^\infty k^{2/(1-\gamma)} 2^{-k^{\gamma/(1-\gamma)}}<\infty.
$$
Now for any $\ep>0$ let 
$$
N_1=\left\lfloor\frac{4 m }{1-\gamma}\right\rfloor\ge 4m
$$ 
and 
$$
\delta_1=t_1=\frac{\ep}{64M_\gamma \sqrt{N_1}}.
$$
By \eqref{probability_influence}, we have
\beq
\label{probability_influence_1}
\begin{aligned}
P(\mathcal{I}_{0,t_1}(0)\ge N_1)&\le \left(\frac{\ep}{16M_\gamma \sqrt{N_1}} \right)^{N_1} \sqrt{N_1!}\\
&\le  \frac{\ep^{4m}}{M_\gamma} 16^{-N_1}\le   \frac{\ep^{4m}}{M_\gamma} k_1^{2/(1-\gamma)} 2^{-k_1^{\gamma/(1-\gamma)}}
\end{aligned}
\eeq
where $k_1=1$. Then under event 
$$
E_1=A_1=\{\mathcal{I}_{0,t_1}(0)< N_1\},
$$
we define 
$$
x^{(2),2}=1+N_1,
$$
$$
N_2=\left\lfloor\frac{4m \big(x^{(2),2}\big)^\gamma }{1-\gamma}\right\rfloor\ge 4m,
$$
$$
\delta_2=\frac{\ep}{64M_\gamma \sqrt{x^{(2),2}+N_2}},
$$
and
$$
t_2=t_1+\delta_2.
$$
Then define event 
$$
A_2=\bigcap_{y\in B(0,x^{(2),2}-1)}\{\mathcal{I}_{t_1,t_2}(y)< N_2\}.
$$ 
One can first see that by the same calculation as in \eqref{probability_range_1}
\beq
\label{probability_influence_2.5}
\begin{aligned}
P(A_2)&\ge 1-\sum_{y\in B(0,x^{(2),2}-1)} P(\mathcal{I}_{t_1,t_2}(y)\ge N_2)\\
&\ge 1-4\big(x^{(2),2}\big)^{2}\left(\frac{\ep}{16M_\gamma \sqrt{x^{(2),2}+N_2}}\right)^{N_2}\sqrt{\prod_{j=x^{(2),2}-1}^{x^{(2),2}+N_2-2}j}\\
&\ge 1-\big(x^{(2),2}\big)^{2}\frac{\ep^{4m}}{16M_\gamma}2^{-k_2^{\gamma/(1-\gamma)}}
\end{aligned}
\eeq
where
$$
k_2=\lfloor\big(x^{(2),2}\big)^{1-\gamma}\rfloor.
$$
The last inequality in \eqref{probability_influence_2.5} results from
$$
N_2=\left\lfloor\frac{4m \big(x^{(2),2}\big)^\gamma }{1-\gamma}\right\rfloor\ge \big(x^{(2),2}\big)^\gamma\ge k_1^{\gamma/(1-\gamma)}. 
$$

Moreover, we have 
$$
\big(x^{(2),2}\big)^{2}=\left[(x^{(2),2}\big)^{1-\gamma} \right]^{2/(1-\gamma)}
$$
while
$$
(x^{(2),2}\big)^{1-\gamma} \le 2k_2.
$$
Thus
\beq
\label{probability_influence_3}
\begin{aligned}
P(A_2)&\ge1-\big(2k_2\big)^{2/(1-\gamma)}\frac{\ep^{4m}}{16M_\gamma}2^{-k_2^{\gamma/(1-\gamma)}}\\
&\ge 1-\frac{\ep^{4m}}{M_\gamma}k_2^{2/(1-\gamma)}2^{-k_2^{\gamma/(1-\gamma)}}.
\end{aligned}
\eeq
Then note that for any $x\ge 1$, we have by calculus 
$$
\big(x^{1-\gamma}+4m\big)^{1/(1-\gamma)}> x+\frac{4m}{1-\gamma} x^\gamma
$$
while 
$$
\begin{aligned}
\big(x^{1-\gamma}+1\big)^{1/(1-\gamma)}&\le x+\frac{1}{1-\gamma} \big(x^{1-\gamma}+1\big)^{\gamma/(1-\gamma)}\\
&\le x+\frac{1}{1-\gamma} \big(2x^{1-\gamma}\big)^{\gamma/(1-\gamma)}\\
&< x+\frac{2}{1-\gamma} x^\gamma\\
&< x+\left\lfloor\frac{4m x^\gamma }{1-\gamma}\right\rfloor.
\end{aligned}
$$
We have that 
\beq
\label{probability_influence_3.5}
\big(x^{(2),2}\big)^{1-\gamma}=\big(1+N_1\big)^{1-\gamma}=\left(1+\left\lfloor\frac{4m\cdot 1^\gamma }{1-\gamma}\right\rfloor \right)^{1-\gamma}\in \big(2,1+4m \big)
\eeq
and that
$$
k_2=\lfloor\big(x^{(2),2}\big)^{1-\gamma}\rfloor\in (k_1+1,k_1+4m) 
$$
since $k_1=1$. Then using exactly the same argument on 
$$
x^{(2),3}=x^{(2),2}+N_2,
$$
and 
$$
k_3=\lfloor\big(x^{(2),3}\big)^{1-\gamma}\rfloor,
$$
we have 
\beq
\label{probability_influence_4}
\begin{aligned}
\big(x^{(2),3}\big)^{1-\gamma}=\big(x^{(2),2}+N_2\big)^{1-\gamma}&=\left(x^{(2),2}+\left\lfloor\frac{4m \big(x^{(2),2}\big)^\gamma }{1-\gamma}\right\rfloor \right)^{1-\gamma}\\
&\in \big((x^{(2),2})^{1-\gamma}+1,(x^{(2),2})^{1-\gamma}+4m \big)
\end{aligned}
\eeq
and thus
$$
k_3\in (k_2+1,k_2+4m). 
$$
Then we note that the event $A_1$ depends only on the transitions within $B(0,N_1)\times [0,t_1]$, while event $A_2$ depends only on the transitions within $B(0,N_1+N_2)\times [t_1,t_2]$. By the independence of Poisson process increments, we have that $A_1$ is independent to $A_2$, and thus for $E_2=A_1\cap A_2$,
$$
P(E_2)=P(A_1)P(A_2)\ge \left( 1-\frac{\ep^{4m}}{M_\gamma}k_1^{2/(1-\gamma)}2^{-k_1^{\gamma/(1-\gamma)}}\right)\cdot\left( 1-\frac{\ep^{4m}}{M_\gamma}k_2^{2/(1-\gamma)}2^{-k_2^{\gamma/(1-\gamma)}}\right).
$$
Finally, recalling the definition of $\mathcal{I}_{0,t}$, one can immediately have under $E_2$
$$
\mathcal{I}_{0,t_2}(0)<x^{(2),3}<\infty. 
$$
Repeat the iteration above, i.e., for all $n\ge 3$ let 
$$
x^{(2),n}=x^{(2),n-1}+N_{n-1},
$$
$$
N_n=\left\lfloor\frac{4m \big(x^{(2),n}\big)^\gamma }{1-\gamma}\right\rfloor,
$$
$$
\delta_n=\frac{\ep}{64M_\gamma \sqrt{x^{(2),n}+N_n}},
$$
$$
t_n=t_{n-1}+\delta_n,
$$
$$
A_n=\bigcap_{y\in B(0,x^{(2),n}-1)}\{\mathcal{I}_{t_{n-1},t_n}(y)< N_n\},
$$ 
and
$$
E_n=E_{n-1}\cap A_n.
$$
Consider 
$$
E^{\ep}_\infty=\bigcap_{n=1}^\infty A_n.
$$
Under $E^{\ep}_\infty$ we have for any $n\ge1$, 
\beq
\label{probability_influence_5}
\mathcal{I}_{0,t_n}(x)<x^{(2),n+1}<\infty. 
\eeq
At the same time
\beq
\label{probability_influence_6}
\begin{aligned}
t_n&=\sum_{i=1}^n\delta_i=\sum_{i=1}^n \frac{\ep}{64M_\gamma \sqrt{x^{(2),i}+N_i}}=\frac{\ep}{64M_\gamma}\sum_{i=1}^n \frac{1}{ \sqrt{x^{(2),i+1}}}.
\end{aligned}
\eeq
Moreover, by \eqref{probability_influence_4} we have for each $i$
\beq
\label{probability_influence_7}
\big(x^{(2),i}\big)^{1-\gamma}=\big(x^{(2),i-1}+N_{i-1}\big)^{1-\gamma}\in \big(\big(x^{(2),i-1}\big)^{1-\gamma}+1,\big(x^{(2),i-1}\big)^{1-\gamma}+4m \big)
\eeq
which together implies that 
\beq
\label{probability_influence_8}
\big(x^{(2),i}\big)^{1-\gamma}\le 4im. 
\eeq
Combining \eqref{probability_influence_6}-\eqref{probability_influence_8} we have 
\beq
\label{probability_influence_9}
\begin{aligned}
t_n\ge\frac{\ep}{64M_\gamma}\sum_{i=1}^n \left(4im\right)^{-1/(2-2\gamma)}.
\end{aligned}
\eeq

Recalling that $\gamma\in (0,1/2)$, $1/(2-2\gamma)<1$, which implies that the series in \eqref{probability_influence_9} is divergent. So for any $t\ge 0$ there is an $n_0<\infty$ such that for all $n\ge n_0$, $t_n\ge t$, and that $t_{n_0-1}<t$. 
$$
\mathcal{I}_{0,t}(0)\le\mathcal{I}_{0,t_{n_0}}(0).
$$
And by \eqref{probability_influence_5} and \eqref{probability_influence_8}, under $E^{\ep}_\infty$,
$$
\mathcal{I}_{0,t_{n_0}}(0)<x^{(2),n+1}\le [4m(n_0+1)]^{1/(1-\gamma)}. 
$$
Thus we have under event $E^{\ep}_\infty$,
\beq
\label{probability_influence_9.5}
\mathcal{I}_{0,t}(0)\le [4m(n_0+1)]^{1/(1-\gamma)}\le  (8m)^{1/(1-\gamma)}\cdot n_0^{1/(1-\gamma)}
\eeq
On the other hand, 
\beq
\label{probability_influence_10}
t\ge t_{n_0-1}\ge \frac{\ep}{64M_\gamma}\sum_{i=1}^{n_0-1} \left(4m\cdot i\right)^{-1/(2-2\gamma)}\ge \frac{c\ep}{64M_\gamma (4m)^{1/(2-2\gamma)}} n_0^{(1-2\gamma)/(2-2\gamma)}. 
\eeq
Combining \eqref{probability_influence_9.5} and \eqref{probability_influence_10}, we have under event $E^{\ep}_\infty$ there is a constant $C_{m,\gamma}$ depending on $m$ and $\gamma$ but independent to $\ep$ such that   
\beq
\label{probability_influence_10.5}
\mathcal{I}_{0,t}(0)\le C_{m,\gamma} \ep^{-2/(1-\gamma)}t^{2/(1-2\gamma)}. 
\eeq
Note that by the independence of Poisson processes increments, we have that $A_1, A_2,\cdots$ gives a sequence of independent events. And according to \eqref{probability_influence_2.5} and the construction in our iteration, we have for each $i$
\beq
\label{probability_influence_11}
\begin{aligned}
P(A_i)\ge 1-\frac{\ep^{4m}}{M_\gamma}k_i^{2/(1-\gamma)}2^{-k_i^{\gamma/(1-\gamma)}}>1-\ep
\end{aligned}
\eeq
with 
\beq
\label{probability_influence_11.5}
k_i=\lfloor\big(x^{(2),i}\big)^{1-\gamma} \rfloor\in [k_{i-1}+1,k_{i-1}+4m). 
\eeq
Thus for sufficiently small $\ep$ such that for all $x\in (0,\ep)$, $\log(1-x)\ge -2x$ and any $n\ge 1$, we have
$$
P(E_n)-1\ge\log\big( P(E_n)\big)=\log\left( \prod_{i=1}^n P(A_i)\right)\ge 2\sum_{i=1}^n[P(A_i)-1]. 
$$
By \eqref{probability_influence_11}, 
$$
P(E_n)-1\ge -\frac{2\ep^{4m}}{M_\gamma}\sum_{i=1}^n k_i^{2/(1-\gamma)}2^{-k_i^{\gamma/(1-\gamma)}}.
$$
Then noting that by \eqref{probability_influence_11.5} $k_i\ge k_{i-1}+1$ and the fact that all $k_i$'s are integers by definition, we have
$$
\sum_{i=1}^n k_i^{2/(1-\gamma)}2^{-k_i^{\gamma/(1-\gamma)}}\le M_\gamma
$$
and thus
\beq
\label{probability_influence_13}
P(E_n)\ge 1-2\ep^{4m}. 
\eeq
Note that the right hand side of \eqref{probability_influence_13} is independent of $n$. We have $P(E^\ep_\infty)\ge 1-2\ep^{4m}$. Now let 
$$
\ep_j=\left(\frac{1}{j}\right)^{\frac{1-r}{2m}}. 
$$
Then we have for each $j$ sufficiently large, 
\beq
\label{probability_influence_14}
\begin{aligned}
P\left(\frac{\mathcal{I}_{0,t}(0)^m}{(C_{m,\gamma})^mt^{2m/(1-2\gamma)}}> \ep_j^{-2m/(1-\gamma)}\right)&=P\left(\mathcal{I}_{0,t}(0)> C_{m,\gamma} \ep_j^{-2/(1-\gamma)}t^{2/(1-2\gamma)}\right)\\
&\le 1-P(E_\infty^{\ep_j})\le 2\ep_j^{4m}
\end{aligned}
\eeq
and thus 
\beq
\label{probability_influence_15}
\begin{aligned}
P\left(\frac{\mathcal{I}_{0,t}(0)^m}{(C_{m,\gamma})^mt^{2m/(1-2\gamma)}}> j\right)\le 2\left(\frac{1}{j}\right)^{2(1-\gamma)}.
\end{aligned}
\eeq
Noting that $\gamma<1/2$ and thus $2(1-\gamma)>1$, 
$$
\sum_{n=1}^\infty P\left(\frac{\mathcal{I}_{0,t}(0)^m}{(C_{m,\gamma})^mt^{2m/(1-2\gamma)}}> j\right)<\infty
$$
which implies that $E[\mathcal{I}_{0,t}(0)^m]<\infty$. \qed

\section*{Acknowledgments} 
We would like to thank Itai Benjamini, Noam Berger, Marek Biskup, Rick Durrett and Gady Kozma for fruitful discussions in the initiation of this project. 

\bibliography{career}

\begin{thebibliography}{10}

\bibitem{MR3619794}
T.~Antunovi{\'c} and E.~B. Procaccia.
\newblock Stationary eden model on cayley graphs.
\newblock {\em The Annals of Applied Probability}, 27(1):517--549, 2017.

\bibitem{DLA_re-visit}
I.~Benjamini and A.~Yadin.
\newblock Upper bounds on the growth rate of diffusion limited aggregation.
\newblock {\em arXiv preprint arXiv:1705.06095}, 2017.

\bibitem{Berger-Kagan-Procaccia}
N.~Berger, J.~J. Kagan, and E.~B. Procaccia.
\newblock Stretched {IDLA}.
\newblock {\em ALEA Lat. Am. J. Probab. Math. Stat.}, 11(1):471--481, 2014.

\bibitem{ten_lectures}
R.~Durrett.
\newblock Ten lectures on particle systems.
\newblock In {\em Lectures on probability theory ({S}aint-{F}lour, 1993)},
  volume 1608 of {\em Lecture Notes in Math.}, pages 97--201. Springer, Berlin,
  1995.

\bibitem{excursion2}
A.~Dvoretzky and P.~Erd\"os.
\newblock Some problems on random walk in space.
\newblock In {\em Proceedings of the {S}econd {B}erkeley {S}ymposium on
  {M}athematical {S}tatistics and {P}robability, 1950}, pages 353--367.
  University of California Press, Berkeley and Los Angeles, 1951.

\bibitem{excursion22}
P.~Erd\"os and S.~J. Taylor.
\newblock Some problems concerning the structure of random walk paths.
\newblock {\em Acta Math. Acad. Sci. Hungar.}, 11:137--162. (unbound insert),
  1960.

\bibitem{harmonic_measure_1987}
H.~Kesten.
\newblock Hitting probabilities of random walks on {${\bf Z}^d$}.
\newblock {\em Stochastic Process. Appl.}, 25(2):165--184, 1987.

\bibitem{DLA_long}
H.~Kesten.
\newblock How long are the arms in {DLA}?
\newblock {\em J. Phys. A}, 20(1):L29--L33, 1987.

\bibitem{DLA_1991}
H.~Kesten.
\newblock Upper bounds for the growth rate of {DLA}.
\newblock {\em Phys. A}, 168(1):529--535, 1990.

\bibitem{when_harmonic_measure_0}
E.~B. Procaccia and Y.~Zhang.
\newblock On sets of zero stationary harmonic measure.
\newblock {\em arXiv preprint arXiv:1711.01013}, 2017.

\bibitem{Stationary_DLA}
E.~B. Procaccia and Y.~Zhang.
\newblock Two dimensional stationary dla.
\newblock {\em in preparation}, 2018.

\bibitem{DLA_introduction}
T.~A. Witten and L.~M. Sander.
\newblock Diffusion-limited aggregation.
\newblock {\em Phys. Rev. B (3)}, 27(9):5686--5697, 1983.

\end{thebibliography}
\bibliographystyle{plain}

\end{document}